\documentclass[a4paper]{amsart} 

\usepackage{geometry}
\geometry{a4paper,left=20mm,right=20mm, top=3cm, bottom=3cm} 
\setlength\parindent{0pt}

\usepackage{setspace}
\onehalfspacing

\usepackage[utf8]{inputenc}
\usepackage[T1]{fontenc}
\usepackage[USenglish]{babel}
\usepackage{textcmds}
\usepackage{amssymb,amsmath,amsfonts,amsthm}
\usepackage{mathtools}
\usepackage[all, cmtip]{xy}
\usepackage{tikz}   
\usetikzlibrary{cd}
\usepackage{longtable}
\usepackage{enumerate}
\usepackage{array}     
\usepackage{booktabs} 
\usepackage{algorithm}
\usepackage{algpseudocode}
\usepackage{hyperref}
\usepackage{url}
\usepackage{verbatim}
\usepackage{color}

\usepackage[alphabetic,backrefs,lite]{amsrefs}
\usepackage{mathscinet}

\newtheorem{letterthm}{Theorem}

\numberwithin{equation}{section}

\newtheorem{theorem}{Theorem}[section]

\newtheorem{proposition}[theorem]{Proposition}
\newtheorem{lemma}[theorem]{Lemma}
\newtheorem{corollary}[theorem]{Corollary}
\newtheorem{claim}{Claim}

\theoremstyle{remark}

\newtheorem{remark}[theorem]{Remark}

\theoremstyle{definition}
\newtheorem{definition}[theorem]{Definition}

\newcommand{\ZZ}{\mathbb{Z}}
\newcommand{\QQ}{\mathbb{Q}}
\newcommand{\RR}{\mathbb{R}}
\newcommand{\CC}{\mathbb{C}}

\newcommand{\PP}{\mathbb{P}}

\newcommand{\Sym}{\operatorname{Sym}}
\newcommand{\de}{\partial}
\newcommand{\tm}{{\otimes m}}

\DeclareMathOperator{\GL}{GL}
\DeclareMathOperator{\PGL}{PGL}

\DeclareMathOperator{\Aut}{Aut}

\DeclareMathOperator{\Irr}{Irr}

\DeclareMathOperator{\ord}{ord}

\DeclareMathOperator{\ev}{ev}

\DeclareMathOperator{\Fix}{Fix}

\setcounter{MaxMatrixCols}{20}

\begin{document}

\title{Pluricanonical Geometry of Varieties Isogenous to a Product and Abelian Covers}

\author[M.~Alessandro, D.~Frapporti and C.~Glei\ss ner]{Massimiliano Alessandro, Davide Frapporti and Christian Glei\ss ner}
 
\address{Massimiliano Alessandro \newline  Universit\"at des Saarlandes,
Campus, Geb\"aude E2 4, D-66123 Saarbr\"ucken, Germany}
\email{malessandro@math.uni-sb.de}

\address{Davide Frapporti
\newline
Politecnico di Milano,
Dipartimento di Matematica,
Piazza Leonardo da Vinci 32,
I-20133 Milano, Italy
}
\email{davide.frapporti@polimi.it}

\address{Christian Gleissner \newline Universit\"at Bayreuth, Universit\"atsstr. 30, D-95447 Bayreuth, Germany}
\email{christian.gleissner@uni-bayreuth.de}

\subjclass[2020]{Primary:  14E20, 14L30, 14E05, 14J30,  14Q15. Secondary: 20B25, 20C15.}
\keywords{Abelian covers, finite group actions, canonical map, pluricanonical maps, birationality, varieties isogenous to a product, threefolds of general type}

\thanks{
\textit{Acknowledgments.} The authors would like to thank Roberto Pignatelli and Federico Fallucca for helpful comments and feedback. In particular, they are grateful to Roberto Pignatelli for helping with the proof of Theorem \ref{4-canonical-map}, and also for discussions concerning the content of Theorem~\ref{prop-push-decomp}. 
They are also indebted to Vladimir Lazi\'c for his help with the proof of Proposition \ref{Proposition-Fibrations}.
\newline
The authors further thank Bernhard K\"ock and Wenfei Liu for useful comments and references on the Chevalley-Weil formula.
\newline
The first-named author would like to thank Aryaman Patel and Francesco Antonio Denisi for engaging discussions and valuable insights.
\newline
The first-named and second-named authors are members of G.N.S.A.G.A. of I.N.d.A.M.
}

\begin{abstract}
We study canonical and pluricanonical maps of varieties isogenous to a product of curves, i.e., quotients of the form
\[
X = (C_1 \times \dots \times C_n)/G
\]
with $g(C_i)\ge 2$ and $G$ acting freely. For this purpose, we provide a technical result which is of general interest: a decomposition theorem for pluricanonical systems of abelian covers. This theorem provides an effective tool for the explicit study of geometric properties, such as base loci and the birationality of  pluricanonical maps.

\noindent
For threefolds isogenous to a product, we prove that the 4-canonical map is birational for $p_g \ge 5$ and construct an example attaining the maximal canonical degree for this class of threefolds. In this example, the canonical map is the normalization of its image, which admits isolated non-normal singularities. Computational classifications also reveal threefolds where the bicanonical map fails to be birational, even in the absence of genus-2 fibrations. This illustrates an interesting phenomenon similar to the non-standard case for surfaces.
\end{abstract}

\maketitle

\section*{Introduction}

Canonical and pluricanonical maps are fundamental objects in birational geometry and provide key information on the geometry of complex varieties of general type.
Recall that for a smooth projective curve $C$ of genus $g(C) \ge 2$ the canonical system $|K_C|$ defines a morphism
\[
\Phi_{K_C} \colon C \longrightarrow \mathbb{P}_\CC^{g(C)-1}.
\] 
This is an embedding if and only if $C$ is non-hyperelliptic; if $C$ is hyperelliptic, $\Phi_{K_C}$ is a finite morphism of degree two onto the rational normal curve.

In dimension two, the situation is more involved. Let $S$ be a minimal surface of general type. The canonical system $|K_S|$ need not be base-point free, i.e., the canonical map $\Phi_{K_S}$ is not always a morphism, nor is it necessarily birational onto its image. When the canonical map is generically finite, its degree is bounded from above by $36$, as shown by Beauville \cite{Beau79}.
Much effort has been devoted to constructing surfaces $S$ whose canonical map has a prescribed degree $d$ in the range $2 \le d \le 36$,  cf.~\cite{MLP23}.
A rich source of examples is provided by \emph{surfaces isogenous to a product of unmixed type} (introduced by Catanese in \cite{Cat00}), namely smooth projective surfaces of the form
\[
S = (C_1 \times C_2)/G,
\]
where $C_1$ and $C_2$ are smooth projective curves of genus at least $2$ and $G$ is a finite group acting freely and diagonally on the product. Surfaces of this form are particularly convenient for studying canonical maps for several reasons, see  \cite{GPR22}, \cite{FG23}, \cite{Fall24a}, \cite{Fall24b} (and \cite{FG20} for a similar construction in dimension three).
Indeed, many questions about $S$ reduce to questions about the curves $C_1$ and $C_2$, which are much better understood. Moreover, when $G$ is abelian, the surface can be realized as an abelian cover
\[
S \to (C_1/G) \times (C_2/G),
\]
so that techniques from the theory of abelian covers (see \cite{Par91}), combined with representation-theoretic methods, become available to analyze the canonical system in detail. In the same spirit, Catanese \cite{Cat18} used this class of surfaces to construct examples with birational canonical maps and canonical images of high degree.
This motivates our systematic study of canonical and pluricanonical maps of \emph{varieties isogenous to a product of unmixed type}, i.e., smooth projective varieties of the form
\[
X = (C_1 \times \dots \times C_n)/G,
\]
where each $C_i$ is a smooth projective curve of genus at least $2$ and $G$ is a finite group acting freely and diagonally on the product. These objects naturally generalize surfaces isogenous to a product of unmixed type to higher dimensions; for brevity, we will often refer to them as \textit{VIPs}.

Our main focus is to investigate the birationality of their associated canonical and pluricanonical maps. 
When attempting to extend to VIPs and their pluricanonical maps the above-mentioned strategy, significant challenges arise.
Recall that the Chevalley-Weil formula \cites{CheWeil34,Weil35} (see also \cite{EL80}, \cite{K04}, \cite{K05}, \cite{LL25a}, \cite{LL25b}  for various generalizations) allows one to determine the characters of the canonical and pluricanonical representations of a Riemann surface \( C \) endowed with an action of a finite group \( G \). While this information is useful for the study of the \( m \)-canonical linear system \( H^0(X, K_X^{\otimes m}) \), it is by itself insufficient to extract finer geometric properties, such as the presence of base points.
To overcome this limitation in the case where $G$ is abelian, we use Pardini's theory of abelian covers to obtain a complete and detailed decomposition of the pluricanonical systems, rather than just the character of the Galois action.
Such decompositions have been studied explicitly in the literature for \( m = 1 \) by Pardini \cite{Par91} and Liedtke \cite{Lie03}, and for \( m = 2 \) by Bauer–Pignatelli \cite{BP21}. Since we could not find a reference in the  literature for $m\geq 3$, we provide a general version of such a decomposition formula for finite abelian covers and arbitrary \( m \), which we believe to be of independent interest. This is the content of the following theorem.\footnote{
In a private communication, Roberto Pignatelli informed us that a similar version of Theorem \ref{thm-decomp} was presented by him in a talk at the University of Bayreuth in 2022. We point out that our proof is different and explicitly provides the correction terms $t_{(H,\psi)}^{r(m,H,\psi, \chi)}$. This explicit description is important for some of our computations, for instance to study the base loci of the pluricanonical systems.
}

\begin{letterthm}[Theorem \ref{prop-push-decomp}] \label{thm-decomp}
Let $\pi \colon X \to Y$ be a finite abelian cover where $X$ is normal and $Y$ is smooth with Galois group $G$. Let $m$ be a non negative integer,  
$\chi\in \Irr(G)$ a character, $H\leq G$ a cyclic subgroup  and $\psi$ a generator of $\Irr(H)$. 
Denote by  $D_{(H, \psi)}$ the sum of all components of the branch divisor of $\pi$  with stabilizer group $H$
and character $\psi$.
Then there exists a unique integer  $0\leq k(H,\psi, \chi)\leq \vert H \vert -1$ such that $\chi_{\vert H}=\psi^{k(H,\psi, \chi)}$.
Moreover, define 
\[
    r(m,H,\psi, \chi):=k(H,\psi, \chi)-m + \left\lceil \frac{m-k(H,\psi, \chi)}{\vert H \vert }\right\rceil \vert H \vert
\]
and
\[
    \mu(m,H,\psi, \chi):= m-\left\lceil \frac{m-k(H,\psi, \chi)}{\vert H \vert }\right\rceil.
\]
Then there exists an isomorphism of line bundles as follows
\begin{equation*}
    \pi_\ast(K_X^{\otimes m})^\chi\cong  \mathcal{O}_Y\left(\sum_{(H,\psi)}  \mu(m,H,\psi, \chi) \ D_{(H,\psi)}\right)\otimes K_Y^{\otimes m} \otimes L_\chi^{-1},
\end{equation*}
where $L_\chi^{-1}:=(\pi_\ast \mathcal{O}_X)^\chi$. The $m$-canonical system has the following decomposition
\begin{equation*}
    H^0(X,K_X^{\otimes m})=\bigoplus_{\chi\in \Irr(G)} \prod_{(H,\psi)} t_{(H,\psi)}^{r(m,H,\psi, \chi)}
    \ \pi^\ast H^0\left(Y,\mathcal{O}_Y\left(\sum_{(H,\psi)}     \mu(m,H,\psi, \chi) \ D_{(H,\psi)}\right)\otimes K_Y^{\otimes m} \otimes L_\chi^{-1}\right),
\end{equation*}
where  $\{t_{(H,\psi)}=0\}$ is a local equation of the ramification divisor over $D_{(H,\psi)}$.
\end{letterthm}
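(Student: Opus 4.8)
The plan is to combine Pardini's structure theory of abelian covers \cite{Par91} with the ramification formula, reducing the whole statement to an explicit local eigenspace computation near each branch component, performed in codimension one on the smooth base $Y$.

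First I would recall from \cite{Par91} the $G$-equivariant splitting $\pi_*\mathcal{O}_X=\bigoplus_{\chi\in\Irr(G)}L_\chi^{-1}$ with $L_\chi^{-1}=(\pi_*\mathcal{O}_X)^\chi$ a line bundle, together with the fact that over a general point of a branch component $D_{(H,\psi)}$ the cover is the totally ramified cyclic cover $z\mapsto z^{\vert H\vert}$, with inertia $H$ acting on the normal direction through $\psi$. Since $\pi$ is finite, $\pi_*(K_X^{\otimes m})$ is reflexive and its $\chi$-eigenspaces are reflexive direct summands; as $Y$ is smooth, it therefore suffices to prove the asserted isomorphism of line bundles away from a closed subset of codimension $\ge 2$ — in particular away from $\operatorname{Sing}(X)$ and from the locus where distinct branch components meet — so we may work \'etale-locally in the cyclic model. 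Writing $\widetilde D_{(H,\psi)}=\{t_{(H,\psi)}=0\}$ for the reduced ramification divisor over $D_{(H,\psi)}$, one has $\pi^*D_{(H,\psi)}=\vert H\vert\,\widetilde D_{(H,\psi)}$ and the ramification formula $K_X=\pi^*K_Y+\sum_{(H,\psi)}(\vert H\vert-1)\,\widetilde D_{(H,\psi)}$; hence $K_X^{\otimes m}\cong\pi^*K_Y^{\otimes m}\otimes\mathcal{O}_X(mR)$ with $R:=\sum_{(H,\psi)}(\vert H\vert-1)\,\widetilde D_{(H,\psi)}$, and by the projection formula $\pi_*(K_X^{\otimes m})\cong K_Y^{\otimes m}\otimes\pi_*\mathcal{O}_X(mR)$. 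The problem is thereby reduced to computing the eigenspace decomposition of the twisted sheaf $\pi_*\mathcal{O}_X(mR)$.

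The heart of the argument is then the local computation near a fixed component $D=D_{(H,\psi)}$, with $n:=\vert H\vert$. In the model $z=w^{n}$, the sheaf $\mathcal{O}_X$ is free over $\mathcal{O}_Y$ on $1,w,\dots,w^{n-1}$, and $\mathcal{O}_X(mR)$, locally $\mathcal{O}_X\big(m(n-1)\{w=0\}\big)$, is free on the monomials $w^{j}$ for $j$ running over the $n$ consecutive integers in $\{-m(n-1),\dots,n-1-m(n-1)\}$. A generator of $H$ acts on $w$ by a fixed primitive $n$-th root of unity read off from $\psi$, so these monomials are simultaneous eigenvectors; if $\chi_{\vert H}=\psi^{k}$ with $k=k(H,\psi,\chi)$, the $\chi$-eigenspace of $\pi_*\mathcal{O}_X(mR)$ near $D$ is the rank-one $\mathcal{O}_Y$-module generated by the unique $w^{j_0}$ with $j_0\equiv k\pmod n$ in that range. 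Solving the congruence gives $j_0=k+\big(\lceil(m-k)/n\rceil-m\big)n$, and rewriting $w^{j_0}=z^{\lfloor j_0/n\rfloor}w^{k}=z^{-\mu(m,H,\psi,\chi)}\,w^{k}$ and comparing with the local generator $w^{k}$ of $L_\chi^{-1}$ yields $\big(\pi_*\mathcal{O}_X(mR)\big)^\chi\cong\mathcal{O}_Y\big(\mu(m,H,\psi,\chi)\,D\big)\otimes L_\chi^{-1}$ near $D$; tensoring with $K_Y^{\otimes m}$ gives the asserted isomorphism locally. Tracking the same generators under $\pi^*$ — using $\pi^*L_\chi^{-1}\cong\mathcal{O}_X(-k\widetilde D)$ near $\widetilde D$ and $\pi^*K_Y\cong K_X(-R)$ — shows $\pi^*\big(\mathcal{O}_Y(\mu D)\otimes K_Y^{\otimes m}\otimes L_\chi^{-1}\big)\cong K_X^{\otimes m}(-r\,\widetilde D)$ with $r=r(m,H,\psi,\chi)$, so the pullback of a section must be multiplied by $t_{(H,\psi)}^{r}$ to become a regular $\chi$-semi-invariant section of $K_X^{\otimes m}$. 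One checks $0\le r\le n-1$ directly, and that $m=0$ reproduces $\pi_*\mathcal{O}_X$ while $m=1$ reproduces Pardini's formula for $\pi_*\omega_X$. Globalizing, away from the branch locus $\pi$ is \'etale and everything reduces to the plain decomposition of $\pi_*\mathcal{O}_X$, while near each (smooth, codimension-one disjoint) branch component the local identifications above agree, so they patch, and by reflexivity the isomorphism $\pi_*(K_X^{\otimes m})^\chi\cong\mathcal{O}_Y\big(\sum_{(H,\psi)}\mu(m,H,\psi,\chi)\,D_{(H,\psi)}\big)\otimes K_Y^{\otimes m}\otimes L_\chi^{-1}$ holds on all of $Y$. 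Since $G$ is finite, $H^0(X,K_X^{\otimes m})=\bigoplus_\chi H^0(X,K_X^{\otimes m})^\chi$ and $H^0(X,K_X^{\otimes m})^\chi=H^0(Y,\pi_*(K_X^{\otimes m})^\chi)$; combined with the correction term $t_{(H,\psi)}^{r(m,H,\psi,\chi)}$ identified above, summing over $\chi$ yields the stated decomposition of the $m$-canonical system.

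\textbf{Main obstacle.} I expect the delicate point to be the combinatorial bookkeeping of the ceiling functions together with pinning down Pardini's sign and character conventions, so that the exponents come out in exactly the asserted closed forms $\mu(m,H,\psi,\chi)$ and $r(m,H,\psi,\chi)$ rather than off-by-one variants. A secondary technical point is that when $X$ is only normal the divisors $\widetilde D_{(H,\psi)}$ and $R$ need not be Cartier where branch components collide, which is precisely why the computation is run in codimension one and the conclusion is then extended by reflexivity over the smooth base $Y$.
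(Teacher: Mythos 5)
Your proposal is correct: the arithmetic of the exponents checks out (the unique monomial $w^{j_0}$ with $j_0\equiv k \pmod{|H|}$ in the range $\{-m(|H|-1),\dots,|H|-1-m(|H|-1)\}$ is indeed $z^{-\mu}w^{k}$, and the pullback computation gives exactly $r(m,H,\psi,\chi)$ with $0\le r\le |H|-1$), and the reduction to codimension one followed by extension via reflexivity of $\pi_\ast(K_X^{\otimes m})$ is precisely how the paper handles normality of $X$ and the collisions of branch components. The route through the local model differs from the paper's in one genuine respect. You apply the ramification formula $K_X=\pi^\ast K_Y+R$ and the projection formula once, reducing everything to the eigensheaf decomposition of the single twisted sheaf $\pi_\ast\mathcal{O}_X(mR)$, which you then compare directly with $L_\chi^{-1}=(\pi_\ast\mathcal{O}_X)^\chi$ via multiplication by $z^{-\mu}$; this requires knowing that a local generator of $L_\chi^{-1}$ pulls back to something vanishing to order $k(H,\psi,\chi)$ along $\widetilde D_{(H,\psi)}$, a standard fact from Pardini's theory. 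The paper instead first proves a stand-alone lemma for simple cyclic covers (Lemmas \ref{simple-cyclic-formula-1} and \ref{simple-cyclic-formula-2}, computing $\pi_\ast(K_X^{\otimes m})^{\chi_k}$ explicitly in the chart $t^d=f_i$) and then passes to the general abelian cover by factoring $\pi$ through the intermediate quotient $Z=X/H$ near a general point of $D_{(H,\psi)}$, combining the cyclic lemma for $X\to Z$ with Pardini's pushforward formula $p_\ast M^{\otimes(-k)}=\bigoplus_{\eta|_H=\psi^{k}}L_\eta^{-1}$ and the projection formula for the unramified part $Z\to Y$. Your version buys a more uniform treatment (one local monomial computation handles the twist by $mR$ and identifies $\mu$ and $r$ simultaneously), at the cost of importing the local description of $L_\chi^{-1}$ along the branch divisor; the paper's two-step factorization derives that description along the way from the $m=0$ case of its own cyclic lemma. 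Both correctly conclude with the eigenspace decomposition of $H^0(X,K_X^{\otimes m})$ and the correction factors $t_{(H,\psi)}^{r(m,H,\psi,\chi)}$.
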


Next, we give a generalization to higher dimensions and to pluricanonical maps of the representation-theoretic birationality criteria (see Proposition \ref{proj-embedding-of-G}, Corollary \ref{corollary-necessary-bir-criterion}, Proposition \ref{birationality-criteria} and Corollary \ref{birational-criteria-Y=P1^n}) originally introduced by Catanese \cite{Cat18} in the surface case for the canonical map.  

These criteria togheter with the Chevalley-Weil formula (see Theorem \ref{Chev-Weil-Pluri}) and Theorem \ref{thm-decomp} provide us with the technical tools to analyze the  pluricanonical maps and systems of an unmixed VIP with  abelian Galois group in concrete examples. 
More precisely, we implement a classification algorithm for regular unmixed three-dimensional VIPs with  abelian Galois group.  Our approach combines modifications of the algorithms presented in \cite{FG16} and \cite{GPR22} with implementations of the above-mentioned tools: given an integer $\chi\leq -1$, our algorithm classifies all unmixed three-dimensional VIPs with holomorphic Euler-Poincaré characteristic $\chi$ and with abelian group $G$ such that the $G$-action on each curve is faithful. 
We run it for $\chi$ in the range $-8 \leq \chi \leq -1$. The output shows that no threefolds in the range $-7 \leq \chi \leq -1$ satisfy the above-mentioned birationality criteria for the canonical map. However, examples with a birational canonical map do exist for $\chi=-8$. We analyze in detail one such example, which leads to the following theorem.

\begin{letterthm}[Theorem \ref{3-fold-normalization-map}]\label{thm-normalization}
    There exists a regular smooth projective threefold $X$ whose canonical map $\Phi_1\colon X\to \Sigma \subset \PP^{15}$ is the normalization map of its image. In particular, $\Sigma$ has degree $K^3_X=-48\cdot\chi(\mathcal{O}_X)=384$ and at least one isolated non-normal singularity.
\end{letterthm}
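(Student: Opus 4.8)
The plan is to exhibit one concrete member of the classification of regular unmixed three-dimensional VIPs with $\chi(\mathcal{O}_X)=-8$ and abelian group $G$ produced by our algorithm, and then to verify directly that its canonical map has the stated properties. So the first step is to write down the data: the finite abelian group $G$, and for each $i=1,2,3$ a generating vector for the $G$-action on $C_i$, i.e. the branch points on $C_i/G\cong\PP^1$ together with the local monodromies $h_{i,1},\dots,h_{i,r_i}\in G$. One checks that the diagonal action on $C_1\times C_2\times C_3$ is free, equivalently $\bigcap_{i}\bigcup_{j}\langle h_{i,j}\rangle=\{1_G\}$, that each $C_i/G\cong\PP^1$ so that $q(X)=\sum_i g(C_i/G)=0$, and that $\chi(\mathcal{O}_X)=-\tfrac1{|G|}\prod_i(g(C_i)-1)=-8$. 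Feeding the generating vectors into the Chevalley--Weil formula (Theorem~\ref{thm-Chev-Weil} in the case $m=1$) gives the decomposition of each $H^0(C_i,K_{C_i})$ into characters of $G$; taking $G$-invariants of the triple tensor product, $H^0(X,K_X)=\bigoplus_{\chi_1\chi_2\chi_3=1}\bigotimes_{i=1}^{3} H^0(C_i,K_{C_i})^{(\chi_i)}$, yields $p_g(X)=16$, so $\Phi_1\colon X\dashrightarrow\PP^{15}$. Finally, since $X$ is an étale quotient of $C_1\times C_2\times C_3$, the pullback of $K_X$ is $K_{C_1}\boxtimes K_{C_2}\boxtimes K_{C_3}$, which is ample because each $K_{C_i}$ is ample; ampleness descends along the finite surjection, so $K_X$ is ample.

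Next I would establish that $\Phi_1$ is a finite birational morphism onto its image $\Sigma$. Base-point-freeness of $|K_X|$ is checked via Theorem~\ref{thm-decomp} with $m=1$: the decomposition expresses $H^0(X,K_X)$ as a direct sum, over the characters, of $\pi^\ast H^0(Y,\cdot)$ (with $Y=(\PP^1)^3$) multiplied by explicit powers of the ramification sections $t_{(H,\psi)}$, and one reads off from the correction terms $r(1,H,\psi,\chi)$ and from the line bundles $\mu(1,H,\psi,\chi)\,D_{(H,\psi)}$ that these subsystems have no common zero on $X$. As $K_X$ is ample, a base-point-free $|K_X|$ defines a finite morphism $\Phi_1\colon X\to\Sigma\subset\PP^{15}$. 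Birationality of $\Phi_1$ onto $\Sigma$ then follows by verifying, for this example, the hypotheses of the representation-theoretic criterion (Proposition~\ref{birationality-criteria}, Corollary~\ref{birational-criteria-Y=P1^n}): one checks the relevant numerical conditions on the characters occurring in $H^0(X,K_X)$ and on the induced linear subsystems of the $|K_{C_i}|$, which guarantee that the invariant canonical forms separate a general pair of points of $X$. Being a finite birational morphism from the smooth (hence normal) variety $X$, $\Phi_1$ is then the normalization map of $\Sigma$ by the universal property of normalization. In particular $\Phi_1$ contracts nothing and has degree one, so $\deg\Sigma=K_X^3$; the intersection number on the product gives $K_X^3=\tfrac{1}{|G|}\,3!\,\prod_i(2g(C_i)-2)=-48\,\chi(\mathcal{O}_X)=384$.

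It remains to show that $\Sigma$ is not normal and carries an isolated non-normal singularity; this is the main obstacle, and it is where the explicit correction terms of Theorem~\ref{thm-decomp} are essential rather than merely the character of the $G$-action. Since $\Phi_1$ is the normalization of $\Sigma$, non-normality of $\Sigma$ is equivalent to $\Phi_1$ not being an isomorphism, and an isolated non-normal point of $\Sigma$ corresponds to a point $\sigma_0\in\Sigma$ over which $\Phi_1$ fails to be a local isomorphism while being one on a punctured neighbourhood --- concretely, either $\Phi_1^{-1}(\sigma_0)$ consists of two distinct points at which $\Phi_1$ is immersive with images in general position (an ordinary multiple point of $\Sigma$), or $\Phi_1$ is injective but not immersive at a single point with no such failure nearby. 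The plan is to use the decomposition of $|K_X|$ to describe explicitly the locus on $X$ where the invariant canonical forms fail to separate points or tangent directions: since each summand is pulled back from $Y=(\PP^1)^3$ and twisted by ramification sections supported over the $D_{(H,\psi)}$, this ``bad locus'' is cut out by the vanishing of the $t_{(H,\psi)}$ together with a few explicit divisorial conditions on $Y$, hence is a proper closed subset of $X$; one then verifies that it is nonempty and zero-dimensional and pins down one point $\sigma_0$ of the type described above. The delicate part is controlling all the summands simultaneously --- ruling out that the failure of injectivity or of immersivity propagates along a curve --- which requires the precise values $\mu(1,H,\psi,\chi)$ and $r(1,H,\psi,\chi)$, and is carried out as a finite explicit computation on the specific example.
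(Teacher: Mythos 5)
Your overall architecture agrees with the paper's: pick an explicit algebraic datum with $\chi(\mathcal{O}_X)=-8$ (the paper takes $G=(\ZZ/2)^3$ acting on three genus-$5$ curves, each an octic cover of $\PP^1$ branched in six points), use Chevalley--Weil to get $p_g=16$, use Theorem \ref{prop-push-decomp} and Corollary \ref{birational-criteria-Y=P1^n} to verify base-point freeness and birationality, conclude via Proposition \ref{bpfreeness} that $\Phi_1$ is a finite birational morphism and hence the normalization of $\Sigma$, and compute $\deg\Sigma=K_X^3=384$. Up to this point your proposal matches the paper's proof essentially step for step.

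The gap is in the final step, which you correctly identify as the main obstacle but do not actually carry out, and where the strategy you sketch is not the one that works. You propose to read off the locus where $\Phi_1$ fails to be an immersion from the abstract decomposition of Theorem \ref{prop-push-decomp}, as a set ``cut out by the vanishing of the $t_{(H,\psi)}$ together with a few explicit divisorial conditions on $Y$.'' That decomposition controls base points and the separation of points lying over distinct points of $Y=(\PP^1)^3$, but the failure of the \emph{differential} at a point is a finer, coordinate-level question that it does not answer; in particular it gives you no way to prove the non-immersive locus is \emph{nonempty}, i.e.\ that $\Phi_1$ is not simply an embedding. The paper's key additional idea is to make the canonical map completely explicit: it shows each $C_i$ is non-hyperelliptic (the canonical representation contains no $-I_5$, and $G$ does not embed in $\PGL(2,\CC)$), computes via Chevalley--Weil for $m=1,2$ that the ideal of quadrics through the canonical curve is $3$-dimensional and $G$-invariant, and thereby realizes each $C_i$ as an explicit complete intersection of three quadrics $C_{a,b,c}\subset\PP^4$. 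The canonical map of $X$ is then the list of sixteen $G$-invariant monomials $x_iy_jz_k$, and one exhibits a concrete point (with $x_0=x_1$, $y_4=z_4=0$) where the first column of the $15\times 3$ Jacobian vanishes, then shows by analyzing the $3\times 3$ minors that the degeneracy locus near that point satisfies $z_4=y_4=x_0^2-x_1^2=0$ and is therefore finite. Note that the condition $x_0^2=x_1^2$ is not a ramification or divisorial condition on $Y$ at all; it comes from implicit differentiation of the quadric equations, which is why the explicit curve equations, and not just the building data of the cover, are indispensable here. Without this (or an equivalent explicit model), your outline does not yet establish the existence of the isolated non-normal singularity.
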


Note that the degree of the image of the canonical map for a three-dimensional VIP $X$ is bounded from above by $-48\cdot \chi(\mathcal{O}_X)$. Hence, the previous theorem shows that this bound is sharp. 
Moreover, the existence of a threefold whose canonical image has an isolated
non-normal singularity is a noteworthy phenomenon, as we were unable to find any such
examples in the literature.

We also run a classification algorithm which allows non-faithful actions of the group $G$ on the curves $C_i$. Applying it for $\chi=-1$ and $|G|\leq 16$, we obtain $347$ families of threefolds, summarized in Table \ref{Table-chi-1}.
The table lists the Galois group $G$, the orders of the kernels, the branching types of the Galois covers $C_i \to C_i/G_i \simeq \mathbb{P}^1$ and the Hodge numbers of the threefolds. Additionally, for the canonical and bicanonical maps, it indicates the number of families for which these maps are base-point free, birational, non-birational and the number of those for which we cannot decide about the birationality  by using our criteria. 
We also comment on the failure of birationality of the bicanonical map in our examples. To put things into perspective, we  recall the situation in lower dimensions.
For a curve $C$, the bicanonical map is always an embedding unless $g(C)=2$; in this case, it is a double cover of a conic in $\mathbb{P}^2$. In dimension two, the works of Bombieri~\cite{Bomb73} and Reider~\cite{Reider88} imply that the bicanonical map of a minimal surface $S$ of general type with $K_S^2 \geq 10$ is birational, except when $S$ admits a genus-2 fibration. 
This is the so-called \emph{standard case}; see also the survey \cite[Section 2]{BCP06} for an overview on the topic.
In our setting, the bicanonical map of some threefolds listed in Table~\ref{Table-chi-1} fails to be birational for the presence of a genus-2 fibration: this is similar to the standard case. However, we also identify and highlight  examples without any genus-2 fibration whose bicanonical map is nevertheless non-birational, see Remark \ref{Remark-Table}. This phenomenon is particularly interesting, as it is similar to the \emph{non-standard case} for surfaces.

We further observe that for all examples listed in Table \ref{Table-chi-1},  $K_X^{\otimes 2}$ is base-point free and $K_X^{\otimes 3}$ defines a birational map. This raises the natural questions of whether these properties hold for all regular unmixed three-dimensional VIPs.

\medskip

Besides constructing and analyzing explicit examples, we also establish general results concerning the (pluri)canonical maps of varieties isogenous to a product, which are valid even when the group action is non-diagonal.
In particular, we prove a birationality result for the $4$-canonical map of threefolds isogenous to a product. Our argument relies on the work of Chen-Hu \cite{CH21} and on the geometry of Horikawa $(1,2)$-surfaces \cite{Hor76II}. Recall that, by \cite{CCZ07}, the $m$-canonical map of a minimal Gorenstein projective threefold of general type with canonical singularities is birational for $m \geq 5$.

\begin{letterthm}[Theorem \ref{4-canonical-map}] \label{thm-4-can}
Let $X$ be a threefold isogenous to a product of curves. If $p_g(X) \geq 5$, then the $4$-canonical map of $X$ is birational.
\end{letterthm}

\medskip

The paper is structured as follows.

\smallskip

In Section \ref{Sect2}, we review the basic theory of varieties isogenous to a product of curves. We also establish general results concerning the pluricanonical maps of VIPs and prove  Theorem \ref{thm-4-can}. 

In Section \ref{Pluri-Reps-Sec}, we study the characters of the pluricanonical representations of a VIP and give a proof of the Chevalley-Weil formula  for $m$-canonical representations with $m\geq 2$ (see Theorem \ref{Chev-Weil-Pluri}).

In Section \ref{secA}, we focus on abelian covers. Here, we prove Theorem \ref{thm-decomp} and then apply it to describe the pluricanonical systems of unmixed VIPs with abelian Galois group.

In Section \ref{section-with-diagram}, we discuss Catanese's birationality criteria, adapting them to the context of pluricanonical maps and higher-dimensional varieties.

In Section \ref{FinalSection}, we describe in detail the example leading to Theorem \ref{thm-normalization}. Finally, we present and discuss the above-mentioned computational results, see Table \ref{Table-chi-1} and Remark \ref{Remark-Table}.

\section{Varieties Isogenous to a Product }\label{Sect2}

We recall the definiton of a variety  isogenous to a product.

    \begin{definition}
        A complex variety X is said to be \emph{isogenous to a product of curves} (for short \emph{VIP}) if it is isomorphic to a quotient
        \[
        X \cong (C_1 \times \dotso \times C_n)/G,
        \]
        where each $C_i$ is a smooth projective curve of genus $g_i\geq 2$ and  $G\subset \Aut(C_1 \times \dotso \times C_n)$ is a finite group acting  freely. 
       
    \end{definition}

    Throughout this paper we mainly focus on the case where $G\subset \Aut(C_1) \times \dotso \times \Aut(C_n) $, i.e., the action of $G$ is \emph{of unmixed type}. Hence, the composition with the projection on the $i$-th  factor provides an action  $\psi_i \colon G \to \Aut(C_i)$ which is not necessarily injective, and thus might have a non-trivial kernel $ K_i$. The quotient group $G/K_i$, which acts faithfully on $C_i$, is denoted by $G_i$.

    \begin{remark}\label{minimal-realization}
    In \cite{FGR26} (and in the appendix to \cite{Rito5} for the mixed case) the authors show that every variety $X$ isogenous to a product has a unique realization $ (C_1 \times \dotso \times C_n)/G$ such that  
    \[
     K_1 \cap \dotso \cap \widehat{K_i} \cap \dotso \cap K_n=\{1\} \qquad \makebox{for all} \quad  i=1, \dotso, n.
    \]
    More precisely, if $(D_1 \times \dotso \times D_n)/G'$  is another realization of $X$, then, up to a permutation of the curves $D_i$, there exist biholomorphic maps $f_i\colon C_i \to D_i$ such that 
    \[
    (f_1, \dotso, f_n) \cdot G \cdot (f_1, \dotso, f_n)^{-1}=G'.
    \]
    
    This realization is called \emph{the minimal realization of $X$}.  
    \end{remark}
    
    From now on, when we consider a realization $\big(C_1 \times \dotso \times C_n\big)/G$ of a variety isogenous to a product $X$, we assume it is minimal.

    \subsection{Group-Theoretic Description of an Unmixed VIP}
   
    Let $X\cong (C_1 \times \dotso \times C_n)/G$ be a variety isogenous to a product of unmixed type.  Up to isomorphism, we can uniquely attach to $X$  a group $G$ together with curves $C_i$ and actions 
     \[
     \psi_i\colon G \to \Aut(C_i) \quad \makebox{with kernels} \quad K_i.
     \]
    We can describe the induced faithful actions 
    \[
    \overline{\psi_i}\colon G_i=G/K_i \hookrightarrow \Aut(C_i)
    \]
    thanks to \emph{Riemann's existence theorem} (cf.~\cite[Sections III.3 and III.4]{Mir95}). For this purpose we recall the notion of a generating vector.

\begin{definition}\label{DefGV}
Let $2 \leq n_1 \leq \ldots \leq n_r$ and  $g' \geq 0$ be integers and $H$ be a finite group.  
A \textit{generating vector} for $H$  of type $[g';n_1, \ldots ,n_r]$ is a $(2g'+ r)$-tuple 
$(d_1,e_1, \ldots , d_{g'},e_{g'};h_1, \ldots , h_r)$ of elements of $H$ such that: 
\begin{itemize}
\item[i)]
$ H = \langle d_1,e_1, \ldots , d_{g'},e_{g'},h_1, \ldots , h_r\rangle$,
\item[ii)]
$\displaystyle{\prod_{i=1}^{g'}[d_i,e_i]\cdot h_1 \ldots  h_r=1_H}$,
\item[iii)]
$\mathrm{ord}(h_i)= n_i$  for all $1 \leq i \leq r$. 
\end{itemize}
\end{definition}

\begin{theorem}[Riemann's Existence Theorem]\label{RsET}
A finite group $H$ acts faithfully and  biholomorphically on some compact Riemann surface $C$ of genus $g(C)$  if and only if  there exists a generating vector 
\[
V:=(d_1,e_1, \ldots , d_{g'},e_{g'};h_1, \ldots , h_r)
\]
for  $H$ of type $[g';n_1,\ldots,n_r]$,
such that  the  Hurwitz formula holds:
\begin{equation*} 
2g(C) -2 = |H| \bigg( 2g'-2+\sum_{i=1}^r \frac{n_i - 1}{n_i} \bigg). 
\end{equation*}

In  this case $g'$ is the genus of the quotient Riemann surface $C' := C/H$
and the $H$-cover $C\to C'$ is branched in $r$ points $\{x_1, \ldots , x_r\}$ with
branching indices $n_1, \ldots , n_r$, respectively.
Moreover, the cyclic groups $\langle h_1\rangle, \ldots , \langle h_r\rangle$ and
their conjugates provide the non-trivial stabilizers of the action of $H$ on $C$.
\end{theorem}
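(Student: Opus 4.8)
The plan is to recast the statement in the language of covering spaces of a punctured Riemann surface and to treat the two implications separately; in both directions the bridge is the classical dictionary between finite connected covers of a surface and the subgroups (for Galois covers, the finite quotients) of its fundamental group, together with the Riemann--Hurwitz formula.

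For the ``only if'' direction, I would start from a faithful holomorphic $H$-action on a compact Riemann surface $C$, form $C' := C/H$ (again a compact Riemann surface, of some genus $g'$), and let $\pi \colon C \to C'$ be the quotient map. This $\pi$ is a finite branched cover, étale over the complement $Y := C' \setminus \{x_1, \dots, x_r\}$ of a finite branch set; since $H$ acts transitively on fibres, all points of $\pi^{-1}(x_j)$ share one ramification index $n_j \geq 2$. Restricting $\pi$ over $Y$ yields a connected Galois cover with deck group $H$, hence a surjection $\rho \colon \pi_1(Y, \ast) \twoheadrightarrow H$. Choosing a geometric basis $a_1, b_1, \dots, a_{g'}, b_{g'}, c_1, \dots, c_r$ of $\pi_1(Y,\ast)$ --- in which each $c_j$ is a small loop around $x_j$ and the only relation is $\prod_i [a_i, b_i] \prod_j c_j = 1$ --- the tuple $(\rho(a_1), \rho(b_1), \dots, \rho(a_{g'}), \rho(b_{g'}); \rho(c_1), \dots, \rho(c_r))$ satisfies i) because $\rho$ is surjective, ii) because the defining relation is respected, and iii) because $\ord(\rho(c_j))$ equals the local degree of $\pi$ over $x_j$, namely $n_j$. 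Riemann--Hurwitz for $\pi$ then gives $2g(C) - 2 = |H|(2g' - 2) + \sum_j (|H| - |H|/n_j)$, which is exactly the Hurwitz formula.

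For the ``if'' direction, I would run the construction in reverse. Fix any compact Riemann surface $C'$ of genus $g'$ with $r$ distinct points $x_1, \dots, x_r$, set $Y := C' \setminus \{x_1, \dots, x_r\}$, and use the same presentation of $\pi_1(Y, \ast)$: sending $a_i \mapsto d_i$, $b_i \mapsto e_i$, $c_j \mapsto h_j$ is well defined by ii) and surjective by i), hence defines $\rho \colon \pi_1(Y, \ast) \twoheadrightarrow H$. Let $U \to Y$ be the connected cover corresponding to $\ker \rho$, a Galois cover with deck group $H$. Over a small punctured disc around $x_j$ the cover is a disjoint union of punctured discs, each mapping by $w \mapsto w^{n_j}$ because the local monodromy is conjugate to $h_j$ (of order $n_j$); filling in the centre of each and declaring $w$ a holomorphic coordinate extends $U$ to a compact Riemann surface $C$ with a finite holomorphic map to $C'$ --- equivalently, $C$ is the analytic normalization of $C'$ in the function field of $U$. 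The deck action of $H$ extends holomorphically to $C$; it is faithful because a deck group acts freely on a connected cover, hence effectively on the dense open $U$, hence on $C$. By construction $C/H \cong C'$ has genus $g'$ and branch indices $n_1, \dots, n_r$, and Riemann--Hurwitz forces $g(C)$ to be the value prescribed by the Hurwitz formula. The final assertion --- that the non-trivial stabilizers are the conjugates of $\langle h_1 \rangle, \dots, \langle h_r \rangle$ --- is read off the same local models, the stabilizer of a point over $x_j$ being generated by the corresponding local monodromy.

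The step I expect to be the main obstacle is the analytic part of the ``if'' direction: equipping the surface obtained from $U$ by filling in the punctures with a bona fide complex structure and checking that the deck transformations become biholomorphic. This rests on the local normal form $w \mapsto w^{n_j}$ and on the fact that a finite topological branched cover of a Riemann surface carries a unique compatible complex structure (i.e. the existence and uniqueness of the analytic normalization). A lesser subtlety, on the ``only if'' side, is choosing the geometric basis of $\pi_1(Y, \ast)$ so that the $c_j$ are genuinely small loops around the branch points and iii) reads off the ramification indices with no conjugacy ambiguity; everything else is bookkeeping with Riemann--Hurwitz.
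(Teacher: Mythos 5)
Your argument is correct, but note that the paper does not prove this statement at all: it is quoted as the classical Riemann existence theorem with a citation to \cite[Sections III.3 and III.4]{Mir95}, so there is no in-paper proof to compare against. What you have written is precisely the standard covering-space argument found in that reference --- the dictionary between connected Galois covers of the punctured quotient $Y = C'\setminus\{x_1,\dots,x_r\}$ and surjections $\pi_1(Y)\twoheadrightarrow H$, the geometric presentation of $\pi_1(Y)$ yielding the generating vector, the Riemann--Hurwitz computation, and the analytic normalization (filling in punctures via the local model $w\mapsto w^{n_j}$) in the converse direction. The two subtleties you flag, namely the existence and uniqueness of the compatible complex structure on the completed cover and the identification of $\ord(\rho(c_j))$ with the ramification index over $x_j$ via the Galois property, are exactly the points the classical treatment settles, so your sketch is faithful to the intended proof.
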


According to the previous theorem, 
the set 
\[
\Sigma_{V}:= 
\bigcup_{h \in H} \left(
h\cdot \langle h_1 \rangle \cdot  h^{-1} \cup 
\dotso
\cup
h\cdot \langle h_r \rangle \cdot  h^{-1}
\right) 	
\]
is the set of group elements of $H$ with non-empty fixed locus on the curve $C$. It is hence called the \emph{stabilizer set of $V$}.

\begin{definition} \label{algebraicdata}
To a variety $X$ isogenous to a product of unmixed type we attach the tuple
\[(G, K_1, \dotso, K_n, V_1, \dotso, V_n)
\]
and  call it an  \emph{algebraic datum} of $X$. Here, $V_i$ is a generating vector for the action $\overline{\psi_i} \colon G_i \to \Aut(C_i) $.

\end{definition}

\begin{remark}\label{freeness-condition}

Note that 
$\bigcap_{i=1}^n\Sigma_{V_i}\cdot K_i =\{1_G\}$ since the group $G$ acts freely on the product $C_1\times \dotso \times C_n$.

\end{remark}

So far we attached to a given a variety $X$
isogenous to a product of unmixed type the group theoretical information provided by an algebraic datum.
Conversely, let $G$ be a finite group and let $K_i \trianglelefteq G$ be normal subgroups such that
\[
     K_1 \cap \dotso \cap \widehat{K_i} \cap \dotso \cap K_n=\{1\} \qquad \makebox{for all} \quad  i=1, \dotso, n.
\]
Let $(V_1,\dotso,V_n) $ be an $n$-tuple of generating vectors, where $V_i$ is a generating vector for $G_i:=G/K_i$.
Assume moreover that the freeness condition of Remark \ref{freeness-condition} holds.
Then, by Riemann's existence theorem, there exists a variety $X$ isogenous to a product of unmixed type with $(G, K_1, \dotso, K_n, V_1, \dotso, V_n)
$ as an algebraic datum.

\subsection{Generalities on Pluricanonical Maps of a VIP} \ 

In this section, we collect some general results on the $m$-canonical map of a VIP.
Our main goal  is to study its birationality for a given $m\geq 1$.  Since such a variety is of general type, there exists a positive integer $m_0$ such that this map is birational for all $m\geq m_0$.

\medskip

Let $X \cong (C_1 \times \dots \times C_n)/G$ be a variety isogenous to a product. We denote by $Z$ the product $C_1\times \dotso \times C_n $. In order to avoid ambiguity, in this subsection we write $\Phi_m^Y$ for the $m$-canonical map of a variety $Y$.  Since
$$H^0(X,K_X^\tm)\cong H^0(Z,K_Z^\tm)^G\subset H^0(Z,K_Z^\tm), $$
we can complete a basis $\{s_0, \dotso, s_k\}$ of $H^0(X,K_X^\tm) $ to a basis $\{s_0, \dotso, s_k, t_0, \dotso, t_l\} $ of $H^0(Z,K_Z^\tm)$. 
We observe that the $m$-canonical map $\Phi_m^Z\colon Z\to \PP(H^0(Z,K_Z^\tm)^\vee)$ is a finite morphism which factors through the product map $\Phi_m^{C_1}\times \dotso \times \Phi_m^{C_n}$ and the Segre embedding. Thus, we have a diagram as follows \begin{equation}\label{pluricanonical-diagram}
        \xymatrix{
        Z \ar[r]^\Xi \ar[d]_{\Phi_m^Z} & X\ar@{-->}[d]^{\Phi_m^X}\\
         \PP(H^0(Z,K_Z^\tm)^\vee) \ar@{-->}[r]^p & \PP(H^0(X,K_X^\tm)^\vee)
        }
\end{equation}
where the map  $p$ is the linear projection onto the first $k+1$ coordinates, namely
\[
    p(x_0 \colon  \dotso\colon  x_k\colon y_0 \colon \dotso \colon y_l) = (x_0 \colon  \dotso\colon  x_k).
\]
    Let $Z_m:=\Phi_m^Z(Z)$ be the $m$-canonical image of $Z$. 
   For the centre of the projection $p$ the following holds
    \begin{equation}\label{proj-centre}
         Z_m\cap V(x_0, \dotso, x_k)=\Phi_m^Z\left(V(s_0,\dotso, s_k)\right).
    \end{equation}

\begin{proposition}\label{bpfreeness}
   If the $m$-canonical system $\vert K_X^\tm \vert$ is basepoint free, then $\Phi_m^X\colon X \to X_m\subset \PP^{P_m-1}$ is finite. 
\end{proposition}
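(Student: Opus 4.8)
The plan is to exploit diagram~\eqref{pluricanonical-diagram} and the finiteness of the $m$-canonical morphism $\Phi_m^Z$ of the product $Z = C_1 \times \dots \times C_n$, which we already know is a finite morphism (it factors through $\Phi_m^{C_1} \times \dots \times \Phi_m^{C_n}$ followed by the Segre embedding, and each $\Phi_m^{C_i}$ is finite since $g(C_i) \ge 2$). First I would observe that under the hypothesis that $|K_X^{\otimes m}|$ is base-point free, the map $\Phi_m^X \colon X \to \PP^{P_m - 1}$ is a genuine morphism, not just a rational map, so the diagram commutes as a diagram of morphisms on the locus where everything is defined. Since $\Xi \colon Z \to X$ is a finite surjective morphism, it suffices to show that the composition $\Phi_m^X \circ \Xi = p \circ \Phi_m^Z$ is finite (a morphism that becomes finite after precomposition with a finite surjection is itself finite, because finiteness can be checked on fibers and $\Xi$ is surjective with finite fibers).

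Next I would analyze $p \circ \Phi_m^Z$. The image $Z_m = \Phi_m^Z(Z)$ is a projective variety, and $p$ is the linear projection from the linear subspace $V(x_0, \dots, x_k)$. By \eqref{proj-centre}, the intersection of the center of projection with $Z_m$ equals $\Phi_m^Z(V(s_0, \dots, s_k))$, where $s_0, \dots, s_k$ is a basis of $H^0(X, K_X^{\otimes m}) = H^0(Z, K_Z^{\otimes m})^G$. The key point is that base-point freeness of $|K_X^{\otimes m}|$ says precisely that the sections $s_0, \dots, s_k$ have no common zero on $X$, equivalently $\Xi^* s_0, \dots, \Xi^* s_k$ have no common zero on $Z$, so $V(s_0, \dots, s_k) = \emptyset$ in $Z$. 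Hence the center of projection does not meet $Z_m$, and therefore the restriction $p|_{Z_m} \colon Z_m \to \PP^{P_m-1}$ is a morphism, and moreover a finite morphism: a linear projection restricted to a projective variety disjoint from the center of projection is a finite morphism, since its fibers are the intersections of $Z_m$ with linear subspaces, which are closed subvarieties of $Z_m$ disjoint from a hyperplane-type locus, hence finite. Composing with the finite morphism $\Phi_m^Z \colon Z \to Z_m$, we conclude $p \circ \Phi_m^Z \colon Z \to \PP^{P_m - 1}$ is finite, and its image is $X_m$.

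I expect the main obstacle to be the bookkeeping around the descent of finiteness through $\Xi$: one must be careful that $\Phi_m^X$ is really the morphism whose composition with $\Xi$ equals $p \circ \Phi_m^Z$, and that finiteness descends correctly. Concretely, $\Phi_m^X$ is proper (it is a morphism from a projective variety), it has finite fibers because each fiber of $\Phi_m^X$ pulls back under the finite surjection $\Xi$ to a union of fibers of $p \circ \Phi_m^Z$, which are finite; a proper morphism with finite fibers is finite (by the standard fact that proper + quasi-finite implies finite). Alternatively, and perhaps more cleanly, one checks directly that $\Phi_m^X(X) = X_m$ and that $\Phi_m^X$ restricted over any affine open of $X_m$ has finite fibers and is proper, then invokes the same criterion. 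Either way the argument is short once the disjointness of the projection center from $Z_m$ is in place, and that disjointness is exactly the translation of the base-point-free hypothesis via \eqref{proj-centre}.
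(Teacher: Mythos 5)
Your proposal is correct and follows essentially the same route as the paper: both translate base-point freeness via \eqref{proj-centre} into the disjointness of $Z_m$ from the centre of the projection $p$, deduce that $p_{\vert Z_m}$ is a finite morphism, and then conclude finiteness of $\Phi_m^X$ from diagram \eqref{pluricanonical-diagram}. The only difference is that you spell out the descent of finiteness through $\Xi$ (proper plus quasi-finite implies finite), a step the paper leaves implicit.
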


\begin{proof}
    Note that $K_X^\tm$ being basepoint free amounts to saying that $V(s_0,\dotso, s_k)=\emptyset$. This is equivalent by \eqref{proj-centre} to $Z_m$ not meeting the centre of the projection $p$. In other words, $\Phi_m^X$ is a morphism if and only if $p_{\vert Z_m}$ is a morphism. In this case  $p_{\vert Z_m}$ is a finite morphism, therefore from diagram \eqref{pluricanonical-diagram} it follows that  $\Phi_m^X$ must be finite.
\end{proof}

\begin{corollary}
    If $\Phi_m^X $ is birational and the $m$-canonical system $\vert K_X^\tm \vert$ is basepoint free, then $\Phi_m^X $ is indeed the normalization map of $X_m$.
\end{corollary}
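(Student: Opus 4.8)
The plan is to deduce the statement formally from Proposition~\ref{bpfreeness} together with the uniqueness characterization of the normalization morphism; essentially nothing beyond bookkeeping is required.

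\textbf{Step 1 (finiteness and the target).} Since $|K_X^{\otimes m}|$ is basepoint free, Proposition~\ref{bpfreeness} gives that $\Phi_m^X \colon X \to X_m \subset \PP^{P_m-1}$ is a finite morphism, where $X_m := \Phi_m^X(X)$ is taken with its reduced structure. As $X$ is irreducible, $X_m$ is an irreducible projective variety, and $\Phi_m^X$ is surjective onto it.

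\textbf{Step 2 (normality of the source).} A variety isogenous to a product of curves is smooth by definition (it is the quotient of a smooth projective variety by a free finite group action), hence normal. Combined with the hypothesis that $\Phi_m^X$ is birational---i.e.\ that it induces an isomorphism $K(X_m) \xrightarrow{\sim} K(X)$ of function fields---we are in the situation of a finite, surjective, birational morphism whose source is a normal variety.

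\textbf{Step 3 (identification with the normalization).} Recall that the normalization $\nu \colon X_m^\nu \to X_m$ is, up to unique isomorphism over $X_m$, the unique finite birational morphism onto $X_m$ from a normal variety; concretely, $\nu_\ast\mathcal{O}_{X_m^\nu}$ is the integral closure of $\mathcal{O}_{X_m}$ inside the function field $K(X_m)$. Since $\Phi_m^X$ is finite it is affine, so $(\Phi_m^X)_\ast\mathcal{O}_X$ is a coherent sheaf of $\mathcal{O}_{X_m}$-algebras, integral over $\mathcal{O}_{X_m}$; by birationality it is contained in $K(X_m)$, and by normality of $X$ it is integrally closed there. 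Hence $(\Phi_m^X)_\ast\mathcal{O}_X = \nu_\ast\mathcal{O}_{X_m^\nu}$, which exhibits $\Phi_m^X$ as the normalization map of $X_m$. There is no serious obstacle: the result is a formal consequence of Proposition~\ref{bpfreeness} and standard properties of normalization. The one point worth stressing is that the basepoint-freeness hypothesis is used precisely to obtain finiteness of $\Phi_m^X$---without it a birational morphism onto its image (such as a blow-down) need not be a normalization---so both hypotheses in the statement are genuinely needed.
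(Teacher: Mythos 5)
Your proof is correct and matches the paper's (implicit) reasoning: the paper states this corollary without proof, treating it as an immediate consequence of Proposition~\ref{bpfreeness} together with the standard fact that a finite birational morphism from a normal (here smooth) variety onto its image is the normalization map. Your write-up simply makes that standard argument explicit.
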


\begin{proposition}\label{hyperellBir}
\begin{enumerate}
    \item   If one of the curves $C_i$ is hyperelliptic, then $\Phi_1^X$ is not birational.

    \item If one of the curves $C_i$ has genus $2$, then $\Phi_2^X$ is not birational.
\end{enumerate}
\end{proposition}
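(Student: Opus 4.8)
The plan is to exploit the factorization of the $m$-canonical map of the product $Z = C_1 \times \dots \times C_n$ through the product of the $m$-canonical maps of the curves and the Segre embedding, exactly as encoded in diagram \eqref{pluricanonical-diagram}, and to reduce the failure of birationality of $\Phi_m^X$ to a symmetry of the fibres of $\Phi_m^Z$ coming from the special curve $C_i$. Without loss of generality assume $i = 1$.

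For part (1), suppose $C_1$ is hyperelliptic with hyperelliptic involution $\sigma \in \Aut(C_1)$, so that $\Phi_1^{C_1} \colon C_1 \to \PP^{g_1-1}$ is the $2$-to-$1$ map onto a rational normal curve and $\Phi_1^{C_1} \circ \sigma = \Phi_1^{C_1}$. Consider the automorphism $\tau := (\sigma, \id, \dots, \id)$ of $Z$. First I would observe that $\tau^\ast$ acts trivially on $H^0(Z, K_Z) \cong H^0(C_1, K_{C_1}) \otimes \dots \otimes H^0(C_n, K_{C_n})$, because $\sigma^\ast = \id$ on $H^0(C_1, K_{C_1})$ (the canonical representation of the hyperelliptic involution is trivial — this is the classical fact underlying the Chevalley–Weil computation, and it is precisely the statement that $\Phi_1^{C_1}$ factors through $C_1/\langle\sigma\rangle$). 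Consequently $\tau^\ast$ fixes $H^0(X, K_X) \subset H^0(Z, K_Z)$ pointwise as well, so $\Phi_1^Z \circ \tau = \Phi_1^Z$ and hence, via the commutativity of \eqref{pluricanonical-diagram}, $\Phi_1^X \circ \Xi \circ \tau = \Phi_1^X \circ \Xi$. Since $\tau$ does not normalize $G$ in general one cannot immediately descend $\tau$ to $X$; instead I would argue directly on fibres: a general point $[p] \in X$ has preimages under $\Xi$ forming a single $G$-orbit $G \cdot p$, and $\Xi(\tau(G\cdot p)) = \tau(G \cdot p)$ maps to the same point of $\PP(H^0(X,K_X)^\vee)$; but $\tau(p) \notin G \cdot p$ for general $p$ (otherwise $\tau$ would normalize $G$ and act on $X$, forcing $\sigma$ into a product action contradicting the freeness/minimality setup — this needs a short argument, possibly just that $G\cdot p$ has size $|G|$ while $G\cdot p \cup \tau(G\cdot p)$ would collapse fibres of $\Xi$). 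So $\Phi_1^X$ identifies $\Xi([p])$ with $\Xi(\tau(p))$, two distinct general points of $X$, and is not birational. The cleaner route, which I would prefer to write up, is: $\Phi_1^Z$ factors as $Z \to (C_1/\langle\sigma\rangle) \times C_2 \times \dots \times C_n \to \PP(H^0(Z,K_Z)^\vee)$, so $\Phi_1^Z$ has degree $\geq 2$ onto its image along fibres of $C_1 \to C_1/\langle\sigma\rangle$; pushing this through the generically finite map $p$ of \eqref{pluricanonical-diagram} and using $\deg \Xi = |G|$ shows $\deg \Phi_1^X \geq 2$.

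For part (2), if $C_1$ has genus $2$ then $|K_{C_1}^{\otimes 2}|$ has dimension $3$ and $\Phi_2^{C_1}\colon C_1 \to \PP^2$ is the composition of the hyperelliptic $2$-to-$1$ map $C_1 \to \PP^1$ with the degree-$2$ Veronese $\PP^1 \hookrightarrow \PP^2$; in particular $\Phi_2^{C_1}$ again factors through $C_1/\langle\sigma\rangle$ and $\sigma^\ast$ acts trivially on $H^0(C_1, K_{C_1}^{\otimes 2})$. The identical argument as in (1), now with $m=2$ and $\tau = (\sigma, \id, \dots, \id)$, shows $\Phi_2^Z$ has degree $\geq 2$ along the fibres of $C_1 \to C_1/\langle\sigma\rangle$ and therefore $\Phi_2^X$ is not birational.

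The main obstacle is the descent step: $\tau$ need not normalize $G$, so one cannot literally say "$\Phi_m^X$ is invariant under an involution of $X$." The safe way around it is to work entirely upstairs on $Z$, using that $\Phi_m^X \circ \Xi = p \circ \Phi_m^Z$ with $\Xi$ finite of degree $|G|$ and $p$ a linear projection that is generically finite on $Z_m$ (its fibres are described by \eqref{proj-centre}), and then to compare generic fibre sizes: $\Phi_m^Z$ already glues the two sheets of $C_1 \to C_1/\langle\sigma\rangle$, a gluing that persists after composing with $p$ because the sections $s_0, \dots, s_k$ spanning $H^0(X,K_X^{\otimes m})$ are themselves $\sigma$-invariant in the first factor. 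Verifying that this extra identification is not already accounted for by the $G$-action — i.e. that it genuinely increases the generic fibre of $\Phi_m^X$ beyond what $\Xi$ contributes — is the one point requiring care, and it follows because for general $p$ the point $(\sigma p_1, p_2, \dots, p_n)$ lies in a different $G$-orbit than $(p_1, \dots, p_n)$, as otherwise $G$ together with $\tau$ would act on $Z$ with $\Xi$ no longer of degree $|G|$, contradicting that $\Xi$ is the quotient by the free $G$-action in the minimal realization.
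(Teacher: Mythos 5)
Your argument is correct in outline but takes a genuinely different route from the paper. The paper argues via Kodaira dimension: since $\Phi_m^{C_1}$ is a double cover of $\PP^1$ (resp.\ of a conic), the $m$-canonical image $Z_m$ of $Z$ is isomorphic to $\PP^1\times\widetilde{C_2}\times\dots\times\widetilde{C_n}$, which has Kodaira dimension $-\infty$; if $\Phi_m^X$ were birational, diagram \eqref{pluricanonical-diagram} would produce a dominant rational map from this variety onto $X$, which is of general type --- a contradiction. Your approach instead exhibits an explicit $2$-to-$1$ identification: the involution $\tau=(\sigma,\id,\dots,\id)$ satisfies $\Phi_m^Z\circ\tau=\Phi_m^Z$, hence $\Phi_m^X\circ\Xi$ identifies $\Xi(p)$ with $\Xi(\tau(p))$, and it remains only to check that $\tau(p)\notin G\cdot p$ for general $p$. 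Both proofs are short; yours has the mild advantage of locating the non-injectivity concretely along the fibres of $C_1\to C_1/\langle\sigma\rangle$, while the paper's avoids any discussion of orbits.

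Two points need repair. First, the hyperelliptic involution acts on $H^0(C_1,K_{C_1})$ as $-\id$, not as $\id$ (the paper itself uses this fact in Section \ref{FinalSection}); this is harmless, since a scalar action still gives $\Phi_1^{C_1}\circ\sigma=\Phi_1^{C_1}$ projectively, and on $H^0(C_1,K_{C_1}^{\otimes 2})$ for $g(C_1)=2$ the action genuinely is trivial. Second, and more importantly, the justifications you offer for the crux step $\tau(p)\notin G\cdot p$ do not work as stated: $\tau$ \emph{does} normalize $G$ (the hyperelliptic involution is central in $\Aut(C_1)$) and descends to an automorphism of $X$, and if $\tau$ preserved every $G$-orbit the degree of $\Xi$ would be unaffected, so neither observation produces a contradiction. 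The correct two-line closure is: if $\tau(p)\in G\cdot p$ on a dense subset, then since $Z$ is irreducible and $G$ is finite, one of the closed sets $\{\tau=g\}$ equals $Z$, i.e.\ $\tau=g$ for some $g\in G$; now either observe that this forces $g\in K_2\cap\dots\cap K_n=\{1\}$ by minimality of the realization, whence $\sigma=\id$, or observe that $\tau\neq\id$ fixes every point $(w,q_2,\dots,q_n)$ with $w$ a Weierstrass point of $C_1$, contradicting the freeness of the $G$-action. With this inserted, your proof is complete.
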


\begin{proof}\
    (1) Suppose w.l.o.g.~that $C_1$ is hyperelliptic. Then the map $\Phi_1^{C_1}\colon C_1 \to \PP^1$ is a double cover, thus $Z_1\cong \PP^1\times \overset{\sim}{C_2} \times \dotso \times \overset{\sim}{C_n} $. If  $\Phi_1^X$ is birational, then from diagram \eqref{pluricanonical-diagram} we get a dominant rational map 
    \[
    \PP^1\times \overset{\sim}{C_2} \times \dotso \times \overset{\sim}{C_n} \dashrightarrow X,
    \]
    a contradiction since $\kappa( \PP^1\times \overset{\sim}{C_2} \times \dotso \times \overset{\sim}{C_n}) =-\infty$.

    (2) Suppose w.l.o.g. that $g(C_1)=2$. Then the map $\Phi_2^{C_1}\colon C_1 \to Q\subset\PP^2$ is double cover of a conic $Q$, thus $Z_2 \cong \PP^1\times \overset{\sim}{C_2} \times \dotso \times \overset{\sim}{C_n}$. We immediately get the same  contradiction as in (1). 
\end{proof}

\begin{proposition}\label{embedding-SIP}
    Let $S\cong(C_1\times C_2)/G$ be a surface isogenous to a product. Then the $m$-canonical map $\Phi_m^S$  is an embedding for $m\geq 3$.
\end{proposition}

\begin{proof}
    Since $K_S$ is ample and $K^2_S=8\chi(\mathcal{O}_S)\geq  8$,  the result follows by 
      \cite[Theorem VII.5.1]{B-H-P-V}.
\end{proof}

\begin{remark}

By  \cite[Theorem 1.1]{CCZ07} the $m$-canonical map of a minimal Gorenstein projective threefold of general type with
canonical singularities is birational for $m\geq 5$. This holds in particular for  threefolds isogenous to a product since they are smooth and their canonical bundle is ample.
\end{remark}

Now we investigate the birationality of $\Phi_4^X$ for a threefold $X$ isogenous to a product. For this purpose, we recall that a $(1,2)$ surface $S$ is a nonsingular projective surface of general type whose minimal model $S'$ has $K^2_{S'}=1$ and $p_g(S')=2$. By \cite[Theorem 4.8]{Hor76II} a $(1,2)$ surface is simply connected.

\begin{theorem} \label{4-canonical-map}
     Let  $X$  be a threefold isogenous to a product. If $p_g(X)\geq 5$, then the $4$-canonical map is birational.
\end{theorem}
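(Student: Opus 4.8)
The starting point is that a threefold $X=(C_1\times C_2\times C_3)/G$ isogenous to a product has ample canonical bundle and canonical (in fact, no) singularities, so by \cite{CCZ07} the $5$-canonical map is already birational; the task is to gain one step. The natural strategy is to exhibit a fibration of $X$ onto a lower-dimensional base and argue inductively, using that for curves and $(1,2)$ surfaces the pluricanonical behaviour is classically controlled. Concretely, I would use one of the three projections $Z=C_1\times C_2\times C_3\to C_i\times C_j$; since $G$ acts diagonally, this descends (after passing to the relevant quotients) to a fibration $f\colon X\to B$ onto a surface or a curve whose general fibre is again a curve or a surface isogenous to a product / of general type of small invariants. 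The key numerical input is $p_g(X)\ge 5$ together with the Chevalley--Weil-type control of $H^0(X,K_X^{\otimes m})$ from Section~\ref{Pluri-Reps-Sec}, which lets one bound from below the restricted pluricanonical systems on the fibres.

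The main steps, in order, would be: \textbf{(i)} Reduce to the case where no $C_i$ is hyperelliptic of genus $2$ or, more generally, separate out the degenerate cases; by Proposition~\ref{hyperellBir} and the structure of $\Phi_m^Z$ in diagram~\eqref{pluricanonical-diagram}, the behaviour of $\Phi_4^X$ is governed by the behaviour of $\Phi_4^{C_i}$, and for $g(C_i)\ge 2$ the $4$-canonical map of a curve is an embedding, so the only obstruction to birationality of $\Phi_4^Z$ composed with the projection $p$ comes from the centre of projection, i.e.\ from base points of $|K_X^{\otimes 4}|$ on $Z_4$. \textbf{(ii)} Use the decomposition $H^0(X,K_X^{\otimes 4})=H^0(Z,K_Z^{\otimes 4})^G$ and the Künneth/Chevalley--Weil description to show that $\Phi_4^X$ separates general points of two distinct fibres of a chosen projection; here $p_g(X)\ge 5$ enters to guarantee enough $G$-invariant sections. \textbf{(iii)} For points in the same fibre, restrict to the fibre $F$ (a surface of general type, or one of Horikawa's $(1,2)$ surfaces in the boundary case) and invoke the birationality of $\Phi_4^F$: for a minimal surface of general type with $K^2\ge 2$ and $p_g\ge 1$ the $4$-canonical map is birational by Bombieri/Reider, and for the delicate $(1,2)$ case one uses \cite[Theorem 4.8]{Hor76II} (simple connectedness) plus the explicit geometry of $(1,2)$ surfaces to handle $m=4$. \textbf{(iv)} Combine (ii) and (iii) via the standard "fibration + base separation" argument (cf.\ \cite{CH21}) to conclude that $\Phi_4^X$ is birational onto its image.

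I expect the main obstacle to be step~\textbf{(iii)} in the extremal configurations: controlling $\Phi_4$ on a fibre that is a $(1,2)$ surface, since there $K^2=1$ and $p_g=2$ sit right at the boundary of Reider-type results, and one genuinely needs Horikawa's classification and the input from \cite{CH21} rather than a soft argument. A secondary subtlety is bookkeeping the $G$-action: one must ensure that the sections produced on $Z$ (or on a fibre product) to separate points are actually $G$-invariant, which is where the hypothesis $p_g(X)\ge 5$ — as opposed to merely $p_g(Z)$ large — is essential, and where the representation-theoretic count of Section~\ref{Pluri-Reps-Sec} does the real work. I would also need to check that the chosen projection does descend to an honest fibration on $X$ (this uses that $G$ is a subgroup of $\mathrm{Aut}(C_1)\times\mathrm{Aut}(C_2)\times\mathrm{Aut}(C_3)$ and the minimality of the realization), and that the general fibre has the claimed invariants.
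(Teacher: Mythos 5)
Your proposal cites the right two references (\cite{CH21} and \cite{Hor76II}) but does not identify the argument that actually makes the proof work, and the route you sketch in its place has a genuine gap. The paper's proof is much shorter and goes in one step: by \cite[Theorem 1.1]{CH21}, for $p_g(X)\geq 5$ the map $\Phi_4$ is birational \emph{unless} $X$ is birational to a threefold fibred over a curve with general fibre a $(1,2)$ surface. Such a fibre would be simply connected by \cite[Theorem 4.8]{Hor76II}, hence so would a resolution $\hat S$ of its birational image in $X$; but the universal cover of a threefold isogenous to a product of curves of genus $\geq 2$ is the bounded polydisk $\Delta^3$, so the lift $\hat S\to\Delta^3$ is constant by the maximum principle. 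This is the whole proof. The key fact you never invoke is precisely this hyperbolicity of the universal cover, which is what rules out the exceptional case of Chen--Hu's criterion; without it, your plan has to establish birationality of $\Phi_4$ directly, which is a much harder task.

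Concretely, the gaps in your plan are these. First, in step (iii) you treat $(1,2)$ surfaces as if they could occur as fibres of the natural projections of $X$: they cannot, since the general fibre of $X\to C_i/G_i$ is a surface isogenous to a product (namely $(C_j\times C_k)/K_i$), which satisfies $K^2=8\chi\geq 8$, whereas a $(1,2)$ surface has $K^2=1$. The $(1,2)$ surfaces enter only as the hypothetical obstruction in Chen--Hu's theorem, not as actual fibres, and their role is to be \emph{excluded}, not analyzed. Second, and more seriously, your steps (ii) and (iv) -- separating general points via $G$-invariant sections of $K_Z^{\otimes 4}$ and then running a ``fibration plus base separation'' argument -- are exactly the hard part of any direct approach and are not carried out: separating two points in the same $G$-orbit of $Z$ by invariant sections is the content of Catanese-type criteria, which in this paper are only established for abelian $G$ and require the building data of the cover, not just $p_g(X)\geq 5$. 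The hypothesis $p_g(X)\geq 5$ controls $h^0(K_X)$, not the separation properties of $|4K_X|$, and its only role in the actual proof is to put $X$ within the scope of \cite[Theorem 1.1]{CH21}. As written, your argument would not close.
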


\begin{proof}
    Since $p_g(X)\geq 5$, \cite[Theorem 1.1]{CH21} implies that  $\Phi_4^X$ is birational if and only if there is no smooth threefold $Y$ birational to $X$ admitting a fibration $Y\to B$, where $B$ is a smooth curve and the general fibre $S$ is a  $(1, 2)$-surface. 
    By contradiction, assume that such a threefold $Y$ exists. Denote by $S'$ the birational image  in $X$ of the general fibre $S\subset Y$ and by $\hat{S}$ a resolution of singularities of $S'$.    Notice the
    $\hat{S}$ is simply connected since it is 
    birational to the  $(1,2)$ surface $S$. Hence, the map $ \hat{S} \to S'\subset  X$ lifts to the universal cover of $X$, which is the product $\Delta^3$ of three unit disks:
    \[
    \xymatrix{
    \Delta^3 \ar[r] & X \\
     & \hat{S} \ar[u] \ar[ul]
    }
    \]
    The map  $\hat{S}\to \Delta^3$ must be constant by the maximum principle, a contradiction.
\end{proof}

\section{Pluricanonical Representations of an Unmixed VIP} \label{Pluri-Reps-Sec}

Given a variety $X \cong (C_1 \times \dotso \times C_n)/G$ isogenous to a product of unmixed type, we always have a diagram as follows
\begin{equation}\label{VIP-diagram}
    \xymatrix{
    C_1 \times \dotso \times C_n \ar[rr] \ar[dr] &  &  C_1' \times \dotso \times C_n'\\
         & X \ar[ur]_\pi  & \\
    }
\end{equation}
Here, the horizontal map is the product of the quotient maps  $C_i \to C_i':=C_i/G_i$. Since the action of $G$ on $C_1\times \dotso \times C_n$ is faithful, the natural map 
    \[
     G \to G_1\times \dotso  \times G_n, \quad  g \mapsto ([g]_1, \dotso , [g]_n),
    \]
    is injective, and therefore $G$ can be considered as a subgroup of $G_1\times \dotso  \times G_n$. 
    \begin{remark}
        Note that, since $(C_1 \times \dotso \times C_n)/G$ is  the minimal realization of $X$,  the group $G$ even embeds in  $G_1\times \dotso \times \widehat{G_i} \times \dotso \times  G_n$.
    \end{remark}
   
    The Galois group of the map $\pi$ in the diagram above is the quotient of the normalizer $ N_{G_1\times \dotso \times G_n}(G)$ by $G$.
    Thus, the map $\pi$ is Galois if and only if $G$ is normal in $G_1\times \dotso \times G_n$.
    This is clearly the case if $G$ is abelian. Conversely, we have: 
    \begin{proposition}\label{G-normal}
        If $G$ is normal in $G_1\times \dotso \times G_n$, then $G$ is abelian.
    \end{proposition}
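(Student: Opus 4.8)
The plan is to exploit the position of $G$ as a subgroup of $H:=G_1\times\dots\times G_n$, using only the combinatorial content of the minimal realization. Two facts are available for free. First, each projection $G\to G_i$ is surjective — this is how $G_i$ was defined. Second, by the minimality of the realization (the Remark just above this statement), for every $i$ the homomorphism $G\to\prod_{j\neq i}G_j$ is \emph{injective}. Writing $M_i:=1\times\dots\times G_i\times\dots\times 1$ for the $i$-th factor subgroup of $H$, the kernel of $G\to\prod_{j\neq i}G_j$ is exactly $G\cap M_i$, which inside $G$ equals $\bigcap_{j\neq i}K_j$; hence minimality says precisely
\[
G\cap M_i=\{1_G\}\qquad\text{for every }i=1,\dots,n.
\]

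Next I would observe that $G$ is normal in $H$ by hypothesis, while each $M_i$ is normal in $H$ because it is a direct factor. For any two normal subgroups $A,B\trianglelefteq H$ one always has $[A,B]\subseteq A\cap B$; applying this with $A=G$ and $B=M_i$ gives
\[
[G,M_i]\subseteq G\cap M_i=\{1_G\}.
\]
Thus $G$ centralizes $M_i$ elementwise, for each $i$.

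Finally, since $H=M_1M_2\cdots M_n$ is generated by the (pairwise commuting) subgroups $M_i$, it follows that $G$ centralizes all of $H$, i.e.\ $G\subseteq Z(H)=Z(G_1)\times\dots\times Z(G_n)$. In particular $G$ is abelian, which is the claim.

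The argument is short and I do not expect a real obstacle; the one point worth isolating is the first step, namely recognizing that the combinatorial meaning of the minimal realization is exactly $G\cap M_i=\{1_G\}$ for all $i$ — once this translation is in place, the commutator identity does all the work. Notably, neither the freeness of the $G$-action on the product nor Riemann's existence theorem is needed here beyond what was already used to set up the minimal realization.
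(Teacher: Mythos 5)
Your proof is correct, and it takes a genuinely more structural route than the paper's. The paper argues element-wise: for $g,g_1\in G$ it conjugates the diagonal image $([g]_1,\dots,[g]_n)$ by $([g_1]_1,1,\dots,1)$, uses normality of $G$ to write the result as the image of some $h\in G$, deduces $g=h$ from $K_2\cap\dots\cap K_n=\{1\}$, and concludes $[g_1,g]\in K_1$; repeating for each index gives $[G,G]\subseteq\bigcap_i K_i=\{1\}$. You instead package the same two inputs (normality of $G$ in $H=G_1\times\dots\times G_n$ and the minimality condition, correctly translated as $G\cap M_i=\{1\}$) into the standard commutator containment $[A,B]\subseteq A\cap B$ for normal subgroups $A,B\trianglelefteq H$, applied with $B=M_i$ a direct factor. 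This avoids all coordinate computations and, as a bonus, proves the strictly stronger statement $G\subseteq Z(G_1\times\dots\times G_n)$, of which abelianness is only a corollary. (Your remark that the projections $G\to G_i$ are surjective is true but not actually used anywhere in the argument.) Both proofs are valid; the paper's is self-contained at the level of elements, while yours isolates the group-theoretic mechanism more cleanly and extracts more information.
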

    
    \begin{proof}
    Suppose $G$ is normal. Then, for $g,g_1\in G$ there exists $h\in G$ such that 
    \begin{equation}\label{commutator-relation}
         (g_1,1,\dotso,1) \cdot (g,g,\dotso ,g) \cdot (g_1^{-1},1,\dotso, 1)\cdot(h^{-1},h^{-1}, \dotso, h^{-1})\in K_1\times \dotso \times K_n. 
    \end{equation}
    In other words, by the minimality of the realization we have that
    $ gh^{-1}\in K_2\cap \dotso \cap K_n=\{1\}.$
    Thus, $g=h$, which implies by \eqref{commutator-relation} that
    $[g_1,g]\in K_1$. Since $g_1,g\in G$ are arbitrary, this shows that $[G,G]\leq K_1$. Similarly, one can show that $[G,G]\leq K_i$, $i=2,\dotso, n$. Thus, we get that $[G,G]=\bigcap_{i=1}^n K_i=\{1\}$, i.e. $G$ is abelian.
    \end{proof}

Let $m$ be a positive integer. We observe that  the action of the Galois group $N_{G_1\times \dotso \times G_n}(G)/G$ of the map $\pi$ on $X$ induces a representation by pullback of $m$-canonical forms, namely
\[
\rho_{m}\colon N_{G_1\times \dotso \times G_n}(G)/G \to \GL(H^0(X,K_X^\tm)), \qquad g \mapsto \ \left \{ \omega \mapsto (g^{-1})^\ast \omega \right \},
\]
which we call \emph{the $m$-canonical representation of the cover $\pi$.} Its character is denoted by $\chi_m$.

Since 
$$H^0(X,K_X^\tm)\cong H^0(C_1\times \dotso \times C_n, K_{C_1\times \dotso \times C_n}^\tm)^G,$$
the next lemma tells us that 
we can compute the character $\chi_m$ of $\rho_m$ in terms of the character of the representation 
\begin{equation}\label{rho}
    \rho \colon G_1 \times \dotso \times G_n\to \GL\big(H^0(C_1\times \dotso \times C_n, K_{C_1\times \dotso \times C_n}^\tm)\big).
\end{equation}

\begin{lemma}\label{lemma-normalizer}
    Let $\rho \colon G \to \GL(V)$ be a representation of a finite group $G$. Given a subgroup $H\leq G$, we denote by $N$ its normalizer in $G$. 
    \begin{enumerate}
        \item The vector space  $V^H$ is $N$-invariant.

        \item Let  $\chi_{\rho_{|N}}=\sum_{\chi \in \Irr(N)} n_\chi \chi  $ be the character of the restriction $\rho_{|N} $. Then the character of the representation 
        $$\overline{\rho_{|N} } \colon N/H \to \GL(V^H)
         \qquad \makebox{is} 
         \qquad 
    \chi_{\overline{\rho_{|N} }}=\sum_{\stackrel{\chi \in \Irr(N)}{\chi_{|H}=\chi(1)\chi_{triv}}} n_\chi \chi.
    $$
    \end{enumerate}

\end{lemma}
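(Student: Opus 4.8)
The plan is to verify the two assertions by elementary representation theory, reducing everything to a statement about characters and fixed subspaces. For part (1), I would first show that $V^H$ is $N$-invariant. Take $v \in V^H$ and $n \in N$; I want $\rho(n)v \in V^H$, i.e. $\rho(h)\rho(n)v = \rho(n)v$ for all $h \in H$. Since $n$ normalizes $H$, we have $n^{-1}hn \in H$, hence $\rho(h)\rho(n) v = \rho(n)\rho(n^{-1}hn)v = \rho(n) v$, using $v \in V^H$. This shows $\rho(n)v$ is $H$-fixed, so the restriction of $\rho_{|N}$ to $V^H$ is well-defined; moreover $H$ acts trivially on $V^H$ by construction, so the action factors through $N/H$, giving the representation $\overline{\rho_{|N}}\colon N/H \to \GL(V^H)$.

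For part (2), the key observation is that for any representation $W$ of $N$, the subspace $W^H$ is precisely the image of the averaging projector $e_H := \frac{1}{|H|}\sum_{h \in H}\rho(h)$, and $\dim W^H = \langle \chi_W|_H, \chi_{\mathrm{triv}}\rangle_H = \frac{1}{|H|}\sum_{h\in H}\chi_W(h)$. Now decompose $\rho_{|N} = \bigoplus_{\chi \in \Irr(N)} n_\chi W_\chi$ into $N$-irreducibles. Taking $H$-fixed parts commutes with direct sums, so $V^H = \bigoplus_{\chi} n_\chi W_\chi^H$ as $N/H$-representations (each $W_\chi^H$ being $N/H$-invariant by part (1)). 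It therefore suffices to identify $W_\chi^H$: for an irreducible $N$-representation $W_\chi$, the subspace $W_\chi^H$ is either $0$ or all of $W_\chi$. Indeed, if $W_\chi^H \neq 0$, then by part (1) it is a nonzero $N$-subrepresentation of the irreducible $W_\chi$, hence equals $W_\chi$; and $W_\chi = W_\chi^H$ means $H$ acts trivially on $W_\chi$, i.e. $\chi|_H = \chi(1)\chi_{\mathrm{triv}}$. Conversely if $\chi|_H = \chi(1)\chi_{\mathrm{triv}}$ then $H$ acts trivially and $W_\chi^H = W_\chi$. Hence $W_\chi^H = W_\chi$ exactly when $\chi|_H = \chi(1)\chi_{\mathrm{triv}}$ and is $0$ otherwise, which yields
\[
\chi_{\overline{\rho_{|N}}} = \sum_{\substack{\chi \in \Irr(N)\\ \chi|_H = \chi(1)\chi_{\mathrm{triv}}}} n_\chi \chi,
\]
where we regard such $\chi$ as characters of $N/H$ via inflation (this is legitimate precisely because they are trivial on $H$). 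This completes the argument.

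The reasoning is entirely routine, so I do not anticipate a genuine obstacle; the only point requiring a small amount of care is checking that an $H$-fixed nonzero subspace of an irreducible $N$-module is automatically $N$-stable — which is exactly what part (1) provides — and keeping track of the fact that the summands appearing in the final formula must be viewed as characters of the quotient $N/H$ rather than of $N$.
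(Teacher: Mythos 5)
Your proposal is correct and follows essentially the same route as the paper: both decompose $\rho_{|N}$ into irreducible constituents and observe that a constituent contributes to $V^H$ exactly when its restriction to $H$ is a multiple of the trivial character. Your write-up is somewhat more explicit — you justify the dichotomy $W_\chi^H \in \{0, W_\chi\}$ via $N$-invariance and irreducibility, whereas the paper simply invokes Maschke's lemma and asserts the criterion — but the underlying argument is the same.
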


\begin{proof}
    As for (1), let $v\in V^H$, $g\in N$ and  $h\in H$. Then $g^{-1} h g\in H$, and therefore   $\rho(g^{-1}hg)(v)=v$, which implies
    $ \rho(h)\rho(g)(v)=\rho(g)(v)$, i.e. $\rho(g)(v)\in V^H$.
   
    As for (2), by Maschke's lemma there exists an $N$-invariant complement $W$ of $V^H$, namely $V=V^H\oplus W$. 
    A constituent $n_\chi\chi$ of $\chi_{\rho_{|N}}$
    belongs to the character of $V^H$ if and only if $\chi_{|H}$ is a multiple of the trivial character $\chi_{triv}$.
\end{proof}

By Künneth formula the action $\rho$ from \eqref{rho} is the tensor product of the representations $$ G_i \to \GL(H^0(C_i, K_{C_i}^\tm),$$
whose characters can be computed using the Chevalley-Weil formula (see \cites{CheWeil34,Weil35}) in terms of the generating vectors $V_i$ of an algebraic datum of $X$. Thus, by the Chevalley-Weil formula and by Lemma \ref{lemma-normalizer} we recover from an algebraic datum of $X$ the character $\chi_m$ of the representation $\rho_m$.

\subsection{The Chevalley-Weil Formula for Pluricanonical Representations} \label{subsec-Chev-Weil}

Let $C$ be a compact Riemann surface and
$G$ be a finite group with a faithful action
\[
\psi\colon G \rightarrow \Aut(C).
\]

Let $\varphi_m$ be the representation of $G$ induced by pullback of $m$-canonical forms, namely
\begin{equation*}
\varphi_m\colon G \to \mathrm{GL}\left( H^0(C, K_C^\tm ) \right)\,,  \quad  g \mapsto [ \omega \mapsto \psi(g^{-1})^{\ast}\omega],
\end{equation*}
and  let $\chi_{\varphi_m}$ be its character. 
The character $\chi_{\varphi_m}$ has the decomposition
\[
\chi_{\varphi_m} = \sum_{\chi \in \Irr(G)} \langle \chi, \chi_{\varphi_m}\rangle \cdot \chi,
\]
where $\langle \cdot, \cdot \rangle$ denotes the hermitian inner product in the space of class functions of the group $G$.

The Chevalley-Weil formula provides a way to compute the coefficients 
$\langle \chi, \chi_{\varphi_m} \rangle$, hence the character $\chi_{\varphi_m}$, in terms of a generating vector $(d_1,e_1, \ldots , d_{g'},e_{g'};h_1, \ldots , h_r)$  of type $[g';n_1, \ldots ,n_r]$ associated with the cover $F\colon C \to C/G$.

\begin{theorem}[Chevalley-Weil Formula for Pluricanonical Representations, cf. \cites{CheWeil34,Weil35}] \label{Chev-Weil-Pluri}

Let $\chi_{\varphi_m}$ be the character of the  $m$-canonical representation
\[
\varphi_m\colon G \to \GL\big(H^0(C, K_C^\tm)\big) 
\]
and let $\chi \in \Irr(G)$ be the character of an irreducible representation $\varrho \colon G \to \mathrm{GL}(V)$. If $m\geq 2$, then it holds 

\begin{equation*}
\langle \chi,\chi_{\varphi_m} \rangle= \frac{2m}{\vert G \vert}\chi(1_G)(g(C)-1) - \chi(1_G)(g(C/G)-1)- \sum_{i=1}^r\sum_{\alpha=0}^{n_i-1} N_{i,\alpha} \frac{[-\alpha-m]_{n_i}}{n_i}.
\end{equation*}
Here $[b]_{n_i} \in \lbrace 0, \ldots, n_i-1\rbrace$ is the congruence class of the integer $b$ modulo $n_i$ and $N_{i,\alpha}$ denotes the multiplicity of $ \zeta_{n_i}^\alpha$ as an eigenvalue of  $\varrho(h_i)$. 
\end{theorem}

\begin{proof}
For all $ g \in G\setminus \{1\}$ we consider the Eichler trace formula \cite[p. 281]{Farkas-Kra}
\[
\chi_{\varphi_m}(g)=\sum_{p\in \Fix(g)}\frac{J_p(g^{-1})^{\nu_g}}{1-J_p(g^{-1})},
\]
where $J_p(g^{-1})$ denotes the derivative of $g^{-1}$ at $p$ and $\nu_g\in \{1,\dotso, \ord(g)\}$ is the unique integer such that  $m=\mu_g \ord(g)+\nu_g$. 
By definition of $\langle \cdot, \cdot \rangle$, we obtain 
$$
\begin{array}{rcl}
\langle \chi,\chi_{\varphi_m} \rangle =
{\displaystyle \frac{1}{|G|}\sum_{g \in G}\chi(g) \overline{\chi_{\varphi_m}(g)}} 
&=&
{\displaystyle \chi(1_G)\frac{(2m-1)(g(C)-1)}{\vert G \vert}}+
{\displaystyle \frac{1}{|G|}\sum_{g \ne 1_G}\chi(g) \overline{\chi_{\varphi_m}(g)}} 
\\
&=& 
{\displaystyle \chi(1_G)\frac{(2m-1)(g(C)-1)}{\vert G \vert}}+
{\displaystyle \frac{1}{|G|}\sum_{\substack{g \in G \\ g \ne 1_G}} 
\sum_{ p \in \Fix(g)}\chi(g) \frac{J_p(g)^{\nu_g}}{1 -  J_p(g)}}
\\
&= &
{\displaystyle \chi(1_G)\frac{(2m-1)(g(C)-1)}{\vert G \vert}}+
{\displaystyle \frac{1}{|G|}\sum_{p \in \Fix(C) } 
\sum_{\substack{g \in G_p \\ g \ne 1_G}} \chi(g) \frac{J_p(g)^{\nu_g}}{1 -  J_p(g)}}\,,
\end{array}
$$
where 
\[
\Fix(C):=\big\lbrace p\in C ~ \big\vert ~ G_p\neq \lbrace 1_G \rbrace \big\rbrace.
\] 
Let $\{q_1,\ldots, q_r\} $ be the 
branch locus of $F\colon C \to C/G$. 
For all $1 \leq i \leq r$ there exists a point  $p_i \in F^{-1}(q_i)$ with 
$G_{p_i}= \langle h_i \rangle$  and  for each $h \in G$ it holds
\[
G_{h(p_i)}=\langle h h_i h^{-1}\rangle.
\]
Moreover, every $p \in \Fix(C)$ maps to a branch point of $F$.
Since $h_i$ is the local monodromy, we have 
\[
J_{h(p_i)}(hh_ih^{-1})=\zeta_{n_i}
\]
and we conclude that the Jacobian  $J_{h(p_i)}(hh_i^l h^{-1})$ is equal to $\zeta_{n_i}^l$ for all $l \in \mathbb Z$. 
Since $\chi$ is a class function we also have $\chi(h_i^l)= \chi(hh_i^lh^{-1})$. 
This implies 
\[
{\displaystyle \frac{1}{|G|}\sum_{p \in \Fix(C) } 
\sum_{\substack{g \in G_p \\ g \ne 1_G}} \chi(g) \frac{J_p(g)^{\nu_g}}{1 -  J_p(g)}}=
\frac{1}{|G|} \sum_{i=1}^r \frac{|G|}{n_i} \sum_{l=1}^{n_i-1} \chi(h_i^l)\frac{\zeta_{n_i}^{l\nu_{h_i^l}} }{1 - \zeta_{n_i}^{l}}=
\sum_{i=1}^r \frac{1}{n_i} \sum_{l=1}^{n_i-1} \frac{\sum_{\alpha = 0}^{n_i-1} 
N_{i,\alpha}\cdot \zeta_{n_i}^{l(\alpha+\nu_{h_i^l})}}
{1 - \zeta_{n_i}^{l}}.
\]	
To simplify the last term in the chain of equalities above, we write: 
\[
\sum_{l=1}^{n_i-1} \frac{ (\zeta_{n_i}^{l})^ {\alpha+\nu_{h_i^l}}}{1 - \zeta_{n_i}^{l}} = 
\sum_{l=1}^{n_i-1} \frac{ (\zeta_{n_i}^{l})^ {-\alpha-\nu_{h_i^l}}}{1 - \zeta_{n_i}^{-l}} = 
\sum_{l=1}^{n_i-1} \frac{ (\zeta_{n_i}^{l})^ {-\alpha-m}}{1 - \zeta_{n_i}^{-l}} =
\sum_{l=1}^{n_i-1} \frac{ (\zeta_{n_i}^{l})^ {[-\alpha-m]_{n_i}}}{1 - \zeta_{n_i}^{-l}}
\]
By \cite[Eq. 8.8]{Reid85} we have
\[
\sum_{l=1}^{n_i-1} \frac{ (\zeta_{n_i}^{l})^ {[-\alpha-m]_{n_i}}}{1 - \zeta_{n_i}^{-l}}=\frac{n_i -1}{2} - [-\alpha-m]_{n_i}.
\]
In summary,
\[
\langle \chi,\chi_{\varphi_m} \rangle  =
{\displaystyle \chi(1_G)\frac{(2m-1)(g(C)-1)}{\vert G \vert}}+
 \sum_{i=1}^r \sum_{\alpha=0}^{n_i-1} \frac{N_{i,\alpha}}{n_i}\bigg(\frac{n_i-1}{2} -[-\alpha-m]_{n_i}\bigg).
\]
Hurwitz' formula and the identity $\sum_{\alpha =0}^{n_i-1} N_{i,\alpha}=\chi(1_G)$ yields the  Chevalley-Weil formula. 
\end{proof}

\begin{remark}
    We want to point out that in order to determine the integers $N_{i,\alpha} $ it is not necessary to know the representation $\varrho\colon G \to \GL(V)$. Indeed, it suffices to know the character $\chi\in \Irr(G)$ of $\varrho$, as explained in the \emph{``Computational Remark 2.9''} of \cite{FG16}, where a proof of the Chevalley-Weil formula for $m=1$ is included too.
\end{remark}

\section{Pluricanonical Systems of an Abelian Cover}\label{secA}

Let \( X \cong (C_1 \times \cdots \times C_n)/G \) be a variety isogenous to a product of unmixed type. Consider the associated cover (see diagram~\eqref{VIP-diagram})
\[
\pi \colon X \to C_1' \times \cdots \times C_n',
\]
whose Galois group is \( N_{G_1 \times \cdots \times G_n}(G)/G \).

In the preceding section, we described  how to determine the character $\chi_m$ of the representation
\[
\rho_m \colon N_{G_1 \times \cdots \times G_n}(G)/G \to \mathrm{GL}(H^0(X, K_X^\tm)), \qquad g \mapsto \left( \omega \mapsto (g^{-1})^\ast \omega \right).
\]
Proposition~\ref{G-normal} asserts that the cover \( \pi \) is Galois if and only if the group \( G \) is abelian. This criterion serves as a motivation for restricting our attention to the abelian case.
In this section, we therefore consider arbitrary abelian covers \( \pi \colon X \to Y \) between complex manifolds. Our aim is to determine the pluricanonical systems \( H^0(X, K_X^\tm) \) explicitly in terms of the building data of the cover \( \pi \), i.e., by using the theory of abelian covers developed by Pardini~\cite{Par91}. While the cases \( m = 1 \) resp.~$m =2$ were previously established by Pardini \cite{Par91} and Liedtke~\cite{Lie03} resp.~by Bauer-Pignatelli \cite{BP21}, we provide a generalization for abitrary $m$. This approach not only allows us to compute the character $\chi_m$ of \( \rho_m \), but in fact yields the full representation itself.

\begin{remark}
    When $G $ is abelian, it is also possible to define the representation $ \rho_m $ via pullback by $ g $ rather than $ g^{-1} $, namely $\rho_m(g)(\omega):=g^\ast \omega$.
    For compatibility with Pardini's convention in~\cite{Par91}, we adopt this definition whenever $G$ is abelian. On the level of characters, this change corresponds to complex conjugation.
\end{remark}

Let $\pi\colon X\to Y$ be a finite abelian cover between complex manifolds with Galois group $G$, and let $m$ be a non negative integer.
The $G$-action  on  $\pi_*(K_X^{\otimes m})$ via  pullback of
holomorphic differential pluriforms yields
the eigensheaf-decomposition
\[
\pi_*(K_X^{\otimes m})=\bigoplus_{\chi \in \Irr(G)} \pi_*(K_X^{\otimes m})^\chi\,, \]
where, for an open set $U\subseteq Y$, 
\[
\pi_\ast (K_X^{\otimes m})^\chi(U):=\{\omega \in K_X^\tm(\pi^{-1}(U)) \ \mid \ g^\ast \omega= \chi(g)\cdot \omega \quad \makebox{for all} \quad  g\in G \}.
\]
Given a character ${\chi \in \Irr(G)}$,
our goal is to provide an explicit description of  the line bundle
$\pi_*(K_X^{\otimes m})^\chi$.

\begin{remark}\label{remark-Lchi}
    Following Pardini's notation  \cite{Par91}, for $m=0$ the line bundle $ (\pi_\ast \mathcal{O}_X)^\chi $ is denoted by $L_\chi^{-1}$. In particular, we have the isomorphism $L_{\chi_{triv}}\cong \mathcal{O}_Y$.
\end{remark}

First, we deal with the case where $\pi\colon X\to Y$ is a simple cyclic cover, and then we show that one can always reduce to this case.

\begin{lemma}\label{simple-cyclic-formula-1}
   Let $\pi \colon X \to Y$ be a simple cyclic cover of degree $d$ between complex manifolds defined by the line bundle $L^{\otimes d}=\mathcal O_Y(B)$ and let  $\chi_k\in \Irr(\ZZ/d)$ be the character defined by  $\chi_k(1)=\zeta_d^k$. If $m \leq k \leq d+m-1$, then it holds
 \[
 \pi_{\ast}(K_X^{\otimes m})^{\chi_k} \cong \mathcal O_Y(mB) \otimes  L^{\otimes (-k)} \otimes K_Y^{\otimes m}.
 \]
\end{lemma}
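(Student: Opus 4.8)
The statement concerns a simple cyclic cover $\pi\colon X\to Y$ of degree $d$ associated to a line bundle $L$ with $L^{\otimes d}=\mathcal O_Y(B)$ for a smooth (reduced) divisor $B$. I want to compute the $\chi_k$-eigensheaf of $\pi_\ast(K_X^{\otimes m})$ for $m\le k\le d+m-1$. The overall approach is the standard local-to-global one for cyclic covers: work on an affine chart where $X$ is cut out inside the total space of $L$ by an equation $z^d=s$, with $s$ a defining section of $B$; describe $\pi_\ast\mathcal O_X$ as $\bigoplus_{j=0}^{d-1}L^{\otimes(-j)}$ with the $\mathbb Z/d$-action scaling the $j$-th summand by $\zeta_d^{-j}$ (or $\zeta_d^{j}$, matching Pardini's convention); then compute $K_X$ and its tensor powers via the relative cotangent sequence and push forward.

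**Key steps.** First I would recall/establish the formula for the canonical bundle of a cyclic cover. Since $\pi$ is finite and flat with $X,Y$ smooth, one has $K_X=\pi^\ast K_Y\otimes \mathcal O_X(R)$ where $R$ is the (reduced) ramification divisor, and $R=(d-1)\tilde B$ where $\tilde B=\pi^{-1}(B)_{\mathrm{red}}$ with $\pi_\ast\mathcal O_X(\tilde B)$ related to $L$; concretely $\pi^\ast\mathcal O_Y(B)=\mathcal O_X(d\tilde B)$ and $\pi_\ast\mathcal O_X(j\tilde B)=\bigoplus_{i}L^{\otimes(\lceil (i+j)/d\rceil\cdot d - i)}\!$-type expression — more cleanly, the eigensheaf decomposition $\pi_\ast\mathcal O_X(j\tilde B)^{\chi_a}\cong L^{\otimes(-a+\text{(shift)})}$. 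Second, raise to the $m$-th power: $K_X^{\otimes m}=\pi^\ast K_Y^{\otimes m}\otimes \mathcal O_X(m(d-1)\tilde B)$, so by the projection formula $\pi_\ast(K_X^{\otimes m})=K_Y^{\otimes m}\otimes \pi_\ast\mathcal O_X(m(d-1)\tilde B)$. Third, decompose $\pi_\ast\mathcal O_X(m(d-1)\tilde B)$ into eigensheaves: a local monomial $z^{-i}\cdot(\text{section})$ lives in the $\chi_{-i}$ (equivalently $\chi_{d-i}$) eigenspace and, after clearing the pole of order $m(d-1)$ along $\tilde B$ using $z^d=s$, contributes $L^{\otimes ?}$ where the exponent is determined by writing $m(d-1)-i = d\cdot q - r$ appropriately. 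The condition $m\le k\le d+m-1$ is exactly the range in which the index bookkeeping collapses to the single clean exponent $-k$, giving $\pi_\ast(K_X^{\otimes m})^{\chi_k}\cong \mathcal O_Y(mB)\otimes L^{\otimes(-k)}\otimes K_Y^{\otimes m}$.

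**The main obstacle.** The bulk of the work — and the step most prone to sign/indexing errors — is the careful bookkeeping in the third step: matching the local generator $z^{-i}(dz)^{\otimes ?}$ (coming from $(\Omega^1_X)^{\otimes m}$ via the relative differentials sequence $0\to\pi^\ast\Omega^1_Y\to\Omega^1_X\to\Omega^1_{X/Y}\to 0$, where $\Omega^1_{X/Y}$ is supported on $\tilde B$ and $dz$ has a specific vanishing/pole behavior since $d z^{d-1}dz = ds$) with the correct eigencharacter $\chi_k$ and the correct twist $L^{\otimes(-k)}\otimes\mathcal O_Y(mB)$. I would organize this by reducing to $Y$ affine with $L$ trivial, so that $X$ is literally $\{z^d=s\}\subset Y\times\mathbb A^1$, compute $K_X^{\otimes m}$ as the module generated by $(ds/z^{d-1})^{\otimes m}$-type elements, and read off that the $\chi_k$-isotypic piece is freely generated over $\mathcal O_Y$ by $z^{k-m}\,(\text{standard }m\text{-canonical generator})$ precisely when $0\le k-m\le d-1$. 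Globalizing via the transition functions of $L$ then produces the stated twist; the range hypothesis guarantees we never need to absorb an extra copy of $s$ (which would shift $B\mapsto B$ by a multiple and change the exponent), so no case distinction beyond the given inequality is needed. Finally, I would note $\mathcal O_Y(mB)=L^{\otimes md}$ if one prefers, but keeping $\mathcal O_Y(mB)$ matches Pardini's normalization and the statement as written.
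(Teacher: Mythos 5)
Your proposal is correct and follows essentially the same route as the paper: the local model $\{t^d=f_i\}$, the trivialization of $K_X^{\otimes m}$ by $(dx_1\wedge\dots\wedge dx_n)^{\otimes m}/t^{(d-1)m}$ away from the ramification, the observation that the $\chi_k$-eigenspace is freely generated by $t^{k-m}$ times this generator exactly when $0\le k-m\le d-1$, and the identification of the resulting cocycle $g_{ij}^{dm-k}$ with $\mathcal O_Y(mB)\otimes L^{\otimes(-k)}$. The detour through $K_X=\pi^\ast K_Y\otimes\mathcal O_X((d-1)\tilde B)$ and the projection formula is an equivalent repackaging that the paper skips, but your final local computation is the same one.
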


\begin{proof}
 The cover $\pi$ is locally given by 
 \[
X_i= \lbrace (x,t)\in U_i \times \CC ~\vert ~t^d-f_i(x)=0 \rbrace \to U_i, \quad (x,t) \mapsto x. 
 \]
 Here, $f_i$ is a local equation of $B$ and we have $f_i=g_{ij}^df_j$, where $g_{ij}$ is the multiplicative $1$-cocycle which defines the line bundle $L$. Note that $B$ is smooth since $X$ is smooth.  On the open set $X_i\cap \lbrace t\neq 0 \rbrace$ the bundle $K_X^{\otimes m}$ is trivialized by $(dx_1\wedge \ldots \wedge dx_n)^{\otimes m}/t^{(d-1)m}$. This implies that 
 \[
 \pi_{\ast}(K_X^{\otimes m})^{\chi_k}(U'_i)=\bigg\lbrace a_i(x)t^{k-m} \frac{(dx_1\wedge \ldots \wedge dx_n)^{\otimes m}}{t^{(d-1)m}} ~ \bigg \vert ~ a_i \in \mathcal O_Y(U'_i) \bigg\rbrace, \qquad m\leq k \leq d+m-1,
 \]
 where $ U_i':=U_i \cap\left\{f_i\neq 0\right  \}$.
 Let $\psi_{ij}(x,t)=(x,g_{ij}(x)t)$ be the transition  map $X_j \cap \pi^{-1}(U'_{ij})\to X_i \cap \pi^{-1}(U'_{ij})$. Then it holds 
 \[
a_j(x)t^{k-m} \frac{(dx_1\wedge \ldots \wedge dx_n)^{\otimes m}}{t^{(d-1)m}}= \psi_{ij}^{\ast}\bigg(a_i(x)t^{k-m} \frac{(dx_1\wedge \ldots \wedge dx_n)^{\otimes m}}{t^{(d-1)m}} 
\bigg) = a_i(x) g_{ij}(x)^{k-dm} t^{k-m}\frac{(dx_1\wedge \ldots \wedge dx_n)^{\otimes m}}{t^{(d-1)m}}. 
 \]
 This implies $a_i(x)= g_{ij}(x)^{dm-k} a_j(x)$. In other words, the collection $\lbrace a_i(x) \rbrace_{i}$ defines a section of the line bundle $L^{\otimes (dm-k)}=\mathcal O_Y(mB) \otimes L^{\otimes (-k)}$.  It follows that the map
    \[
    \mathcal O_Y(mB) \otimes L^{\otimes (-k)} \otimes K_Y^\tm \to  \pi_{\ast}(K_X^{\otimes m})^{\chi_k}, \quad 
    s \mapsto t^{k-m}\otimes\pi^\ast s,
    \] 
    is an isomorphism of line bundles.
\end{proof}

\begin{remark}\label{remark-Lchi-simple-cyclic}
    From Remark \ref{remark-Lchi} and Lemma \ref{simple-cyclic-formula-1} with $m=0$ it follows in the simple cyclic cover case 
    \begin{equation}\label{Lchik}
         L_{\chi_k}\cong L ^{\otimes k}, \qquad 0\leq k \leq d-1.
    \end{equation}
    If $k=0$, it clearly holds $L_{\chi_0}=L_{\chi_{triv}}\cong \mathcal{O}_Y$.
    If $k\neq 0$, it holds  $\chi_k^{-1}=\chi_{d-k}$, hence 
    $L_{\chi_k^{-1}} = \mathcal{O}_Y(B)\otimes L_{\chi_k}^{-1}.$
\end{remark}

\medskip

Note that the standard range for the index $k$ of the character $\chi_k\in \Irr(\ZZ/d)$ is indeed $0\leq k \leq d-1$. However, in Lemma \ref{simple-cyclic-formula-1} this range depends on $m$.  This motivates the following lemma.

\begin{lemma}\label{simple-cyclic-formula-2}
Let   $0\leq k \leq d-1$. Then it holds:
\[
\pi_{\ast}(K_X^{\otimes m})^{\chi_k} \cong \mathcal O_Y\left( \left(m-\left\lceil\frac{m-k}{d}\right\rceil \right)B \right) \otimes L_{\chi_k}^{-1} \otimes K_Y^{\otimes m}.
\]
\end{lemma}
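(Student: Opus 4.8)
The plan is to deduce Lemma~\ref{simple-cyclic-formula-2} from Lemma~\ref{simple-cyclic-formula-1} by choosing the unique representative of the residue class of $k$ modulo $d$ that falls inside the range $m \leq \bullet \leq d+m-1$ required by the earlier lemma. Concretely, given $0 \leq k \leq d-1$, I would set $k' := k + \left\lceil \frac{m-k}{d} \right\rceil d$ and first check that $k' \equiv k \pmod d$ (clear) and that $m \leq k' \leq d+m-1$. The latter is a short verification: writing $q = \left\lceil \frac{m-k}{d}\right\rceil$, we have $q-1 < \frac{m-k}{d} \leq q$, so $m - k \leq qd$ gives $k' = k + qd \geq m$, and $qd < m-k+d$ gives $k' = k+qd < m + d$, i.e. $k' \leq d+m-1$. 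Since $\chi_{k'} = \chi_k$ as characters of $\ZZ/d$ (they depend only on the residue class), Lemma~\ref{simple-cyclic-formula-1} applies with $k$ replaced by $k'$ and yields
\[
\pi_\ast(K_X^{\otimes m})^{\chi_k} = \pi_\ast(K_X^{\otimes m})^{\chi_{k'}} \cong \mathcal{O}_Y(mB) \otimes L^{\otimes(-k')} \otimes K_Y^{\otimes m}.
\]

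Next I would rewrite the right-hand side in the asserted form. Substituting $k' = k + qd$ gives $L^{\otimes(-k')} = L^{\otimes(-k)} \otimes L^{\otimes(-qd)}$, and since $L^{\otimes d} = \mathcal{O}_Y(B)$ we get $L^{\otimes(-qd)} = \mathcal{O}_Y(-qB)$. Combining this with $\mathcal{O}_Y(mB)$ produces $\mathcal{O}_Y((m-q)B)$. Finally, by Remark~\ref{remark-Lchi-simple-cyclic}, in the simple cyclic case $L_{\chi_k} \cong L^{\otimes k}$ for $0 \leq k \leq d-1$, so $L^{\otimes(-k)} = L_{\chi_k}^{-1}$. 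Putting these together yields
\[
\pi_\ast(K_X^{\otimes m})^{\chi_k} \cong \mathcal{O}_Y\!\left(\left(m - \left\lceil \tfrac{m-k}{d}\right\rceil\right)B\right) \otimes L_{\chi_k}^{-1} \otimes K_Y^{\otimes m},
\]
which is exactly the claimed isomorphism.

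\textbf{Main obstacle.} There is no serious obstacle here; the lemma is essentially a bookkeeping reindexing of Lemma~\ref{simple-cyclic-formula-1}. The only point requiring genuine (if elementary) care is the inequality check that $k' = k + \lceil (m-k)/d \rceil d$ lands in the interval $[m, d+m-1]$ — one must handle the ceiling function correctly, keeping in mind that $m$ may be smaller or larger than $k$, so that $\lceil (m-k)/d \rceil$ can be positive, zero, or negative. Once that is settled, the identification $L^{\otimes(-k)} \cong L_{\chi_k}^{-1}$ via Remark~\ref{remark-Lchi-simple-cyclic} and the relation $L^{\otimes d} \cong \mathcal{O}_Y(B)$ finish the argument mechanically. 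I would also remark in passing that the formula is consistent with Lemma~\ref{simple-cyclic-formula-1} whenever $k$ already lies in the overlap of the two ranges, which provides a useful sanity check.
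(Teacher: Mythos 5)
Your proposal is correct and follows essentially the same route as the paper: both reindex $k$ to $k+\left\lceil\frac{m-k}{d}\right\rceil d$ so that Lemma~\ref{simple-cyclic-formula-1} applies, then absorb the extra powers of $L$ using $L^{\otimes d}=\mathcal O_Y(B)$. Your final step is in fact slightly cleaner than the paper's, since you invoke $L_{\chi_k}\cong L^{\otimes k}$ directly and thereby avoid the paper's detour through $L_{\chi_k^{-1}}$ and its case distinction between $k=0$ and $k\neq 0$.
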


\begin{proof}
Note that  $l:=\left\lceil\frac{m-k}{d}\right\rceil$ is the unique non negative integer such that 
$0 \leq (k-m)+ld \leq d-1$, or equivalently,
\[
m\leq k+ld\leq d+m-1.
\]
Thus, Lemma \ref{simple-cyclic-formula-1} yields
\[
\pi_{\ast}(K_X^{\tm})^{\chi_k} = \pi_{\ast}(K_X^{\tm})^{\chi_{(k+ld)}}\cong L^{\otimes (dm-k-ld)} \otimes K_Y^{\tm} 
=L^{\otimes (dm-k-ld-d+k)} \otimes L^{\otimes(d-k)}\otimes  K_Y^{\tm},
\]
where the isomorphism $L^{\otimes (dm-k-ld)} \otimes K_Y^{\tm} \to  \pi_{\ast}(K_X^{\tm})^{\chi_{(k+ld)}}$ is given by $$s\mapsto t^{r} \otimes \pi^* s\,, \qquad r:=k+ld-m.$$
If $k\neq 0$, by \eqref{Lchik} of Remark \ref{remark-Lchi-simple-cyclic} the previous chain of equalities amounts to 
\[
\pi_{\ast}(K_X^{\tm})^{\chi_k}=
\mathcal O_Y((m-l-1)B) \otimes L_{\chi_k^{-1}} \otimes K_Y^{\tm}.  
\]
If $k=0$, then we obtain 
\[
\pi_{\ast}(K_X^{\tm})^{\chi_0} =
\mathcal O_Y((m-l)B)  \otimes K_Y^{\tm}.
\]
Then the formula follows from Remark \ref{remark-Lchi-simple-cyclic}.
\end{proof}

Let us now deal with the general case. Given a finite abelian cover $\pi\colon X \to Y $ between complex manifolds with Galois group $G$, we denote by $R$ the ramification divisor  and by $D$ the branch divisor. If $T$ is an irreducible component of $R$ and $p\in T$  a general point, we denote by $H$ the stabilizer group of $p$, which in fact does not depend on the choice of $p$, and hence it is also called the stabilizer of $T$. Then, by Cartan's lemma \cite[Lemma 1]{Cartan}, the tangent representation at $p$ is faithful and looks as follows  
\begin{equation*}
    \rho_T \colon H \to \GL(n,\CC), \quad h \mapsto \operatorname{diag}(\psi(h), 1, \dotso, 1),
\end{equation*}
where $\psi$ is a generator of  $\Irr(H)$. In particular, the group $H$ is cyclic.

Given a component $C$
of the branch locus $D$, all components of $\pi^{-1}(C)$ have the same stabilizer group
and isomorphic tangent representations. Thus, we define $D_{(H, \psi)}$ to be the sum of all components of $D$ with stabilizer group $H$
and character $\psi$, and we have the following decomposition
\begin{equation*}
   D=\sum_{(H,\psi)} D_{(H,\psi)},
\end{equation*}
where $H\leq G$ runs over the cyclic subgroups of $G$ and $\psi$ over the generators of $\Irr(H)$.

Consider now a summand $D_{(H,\psi)}$ and a point $y\in D_{(H,\psi)}$ away from the other summands $D_{(H',\psi')}$. The cover $\pi$ factors as follows
\[
\xymatrix{
X\ar[rr]^\pi \ar[dr]_g & & Y\\
& Z:=X/H \ar[ur]_p & 
}
\]
Working locally around $y$, we can assume that the cover $p$ is unramified and $g$ is a simple cyclic cover of degree $d:=\vert H \vert$ defined by  $M^{\otimes d}=\mathcal O_Y(B)$. According to Remark \ref{remark-Lchi-simple-cyclic}, it holds  
\[
g_\ast \mathcal{O}_X=\bigoplus_{k=0}^{d -1} M_{\psi^k}^{-1}\cong \bigoplus_{k=0}^{d -1} M^{\otimes (-k)}.
\]
\begin{remark}\label{rmk-push-M}
    
Since $p$ is unramified and $D_{(H,\psi)}$ is the branch locus of $\pi$, we have
\begin{equation}\label{pull-back-D(H,psi)}
    p^\ast (\mathcal{O}_Y(D_{(H,\psi)}))=\mathcal{O}_Z(B) \qquad \makebox{and} \qquad p^\ast K_Y^{\otimes m}=K_Z^{\otimes m}.
\end{equation}
Moreover, by \cite[p. 207, (4.5)]{Par91} it holds
    \begin{equation}\label{push-M}
           p_\ast M^{\otimes(-k)}=\bigoplus_{\underset{\eta_{\vert H}=\psi^k}{\eta \in \Irr(G)}} L^{-1}_\eta\,, \qquad \text{ where } \quad L^{-1}_\eta= (\pi_* \mathcal O_X)^\eta.
    \end{equation}
\end{remark}
Now let $\chi\in \Irr(G)$ be a fixed character. Then we have
\begin{equation}\label{push-1}
    (\pi_\ast K_X^{\otimes m})^\chi =(p_\ast g_\ast K_X^{\otimes m})^\chi\cong\left(\bigoplus_{k=0}^{d-1} p_\ast \left( \mathcal{O}_Z\left(\mu_k B \right) \otimes M_{\psi^k}^{-1}\otimes  K_Z^{\otimes m}\right) \right)^\chi 
\end{equation}
where $\mu_k:=m-\left\lceil\frac{m-k}{d}\right\rceil$ and the isomorphism is given by applying Lemma \ref{simple-cyclic-formula-2} to $g_\ast K_X^{\otimes m}$.

By \eqref{pull-back-D(H,psi)}  each summand in 
\eqref{push-1}  is isomorphic to 
\begin{equation*}
  \left( p_\ast \left( p^\ast \left(\mathcal{O}_Y\left(\mu_{k} D_{(H,\psi)}\right) \otimes K_Y^{\otimes m}\right) \otimes M_{\psi^{k}}^{-1}\right) \right)^\chi \cong
 \mathcal{O}_Y\left(\mu_{k} D_{(H,\psi)}\right) \otimes K_Y^{\otimes m} \otimes  \left(p_\ast M_{\psi^{k}}^{-1}\right)^\chi,
\end{equation*}
where the  isomorphism is given by projection formula.
By using  \eqref{push-M} we obtain
\begin{equation*}
(\pi_\ast K_X^{\otimes m})^\chi \cong \bigoplus_{k=0}^{d-1}\left(
  \mathcal{O}_Y\left(\mu_{k} D_{(H,\psi)}\right) \otimes K_Y^{\otimes m} \otimes  \Bigg( \bigoplus_{\underset{\eta_{\vert H}=\psi^{k}}{\eta \in \Irr(G)}} L^{-1}_\eta\Bigg)^\chi \ \right)\,.
\end{equation*}
Since there exists a unique integer $0\leq k_0\leq d-1$ such that $\chi_{\vert H}=\psi^{k_0}$,
only  one summand survives:
\[
    (\pi_\ast K_X^{\otimes m})^\chi 
     \cong
  \mathcal{O}_Y\left(\mu_{k_0} D_{(H,\psi)}\right) \otimes K_Y^{\otimes m} \otimes  L^{-1}_\chi.
\]

Following the maps, we get the isomorphism of line bundles
 \[
    \mathcal{O}_Y\left(\mu_{k_0} D_{(H,\psi)}\right) \otimes K_Y^{\otimes m} \otimes  L^{-1}_\chi
    \to \left(\pi_\ast K_X^{\otimes m}\right)^\chi, \qquad 
    s  \mapsto t_{(H,\psi)}^r \otimes \pi^\ast(s),
\]
where $\{t_{(H,\psi)}=0\}$ is a local equation for the  ramification divisor over $D_{(H,\psi)}$ and the integer $r$ is defined as 
$$r:=k_0-m+\left\lceil \frac{m-k_0}{d}\right\rceil d, $$
by the isomorphism given in the proof of Lemma \ref{simple-cyclic-formula-2}.

Summing up, we have the following theorem.

\begin{theorem}\label{prop-push-decomp}
Let $\pi \colon X \to Y$ be a finite abelian cover where $X$ is normal and $Y$ is smooth with Galois group $G$. Let $m$ be a non negative integer,  
$\chi\in \Irr(G)$ a character, $H\leq G$ a cyclic subgroup  and $\psi$ a generator of $\Irr(H)$. Then there exists a unique integer  $0\leq k(H,\psi, \chi)\leq \vert H \vert -1$ such that $\chi_{\vert H}=\psi^{k(H,\psi, \chi)}$.
Moreover, define 
\[
    r(m,H,\psi, \chi):=k(H,\psi, \chi)-m + \left\lceil \frac{m-k(H,\psi, \chi)}{\vert H \vert }\right\rceil \vert H \vert
\]
and
\[
    \mu(m,H,\psi, \chi):= m-\left\lceil \frac{m-k(H,\psi, \chi)}{\vert H \vert }\right\rceil.
\]
Then there exists an isomorphism of line bundles as follows
\begin{equation}\label{push-decomposition}
    \pi_\ast(K_X^{\otimes m})^\chi\cong  \mathcal{O}_Y\left(\sum_{(H,\psi)}  \mu(m,H,\psi, \chi) \ D_{(H,\psi)}\right)\otimes K_Y^{\otimes m} \otimes L_\chi^{-1},
\end{equation}
and the $m$-canonical system has the following decomposition
\begin{equation}\label{pluricanonical-system}
    H^0(X,K_X^{\otimes m})=\bigoplus_{\chi\in \Irr(G)} \prod_{(H,\psi)} t_{(H,\psi)}^{r(m,H,\psi, \chi)}
    \ \pi^\ast H^0\left(Y,\mathcal{O}_Y\left(\sum_{(H,\psi)}     \mu(m,H,\psi, \chi) \ D_{(H,\psi)}\right)\otimes K_Y^{\otimes m} \otimes L_\chi^{-1}\right),
\end{equation}
where  $\{t_{(H,\psi)}=0\}$ is a local equation of the ramification divisor over $D_{(H,\psi)}$.

\end{theorem}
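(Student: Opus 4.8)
The plan is to reduce the global statement to the local simple-cyclic computation already carried out in Lemma~\ref{simple-cyclic-formula-2}, exactly as in the discussion preceding the theorem, and then to argue that the local isomorphisms glue to a global one. First I would fix a character $\chi\in\Irr(G)$ and work one branch summand $D_{(H,\psi)}$ at a time. Around a general point $y$ of $D_{(H,\psi)}$ lying off all other summands $D_{(H',\psi')}$, the cover $\pi$ factors through $Z:=X/H$ as $X\xrightarrow{g} Z\xrightarrow{p} Y$, with $p$ \'etale near $y$ and $g$ a simple cyclic cover of degree $d=\vert H\vert$ given by $M^{\otimes d}=\mathcal{O}_Y(B)$, where $B$ is the local piece of $D_{(H,\psi)}$. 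Applying Lemma~\ref{simple-cyclic-formula-2} to $g_\ast K_X^{\otimes m}$, pushing forward by $p$ using \eqref{pull-back-D(H,psi)} and the projection formula, and extracting the $\chi$-eigensheaf via \eqref{push-M} (which selects the unique $k_0=k(H,\psi,\chi)$ with $\chi_{\vert H}=\psi^{k_0}$), yields the local isomorphism
\[
\pi_\ast(K_X^{\otimes m})^\chi \;\cong\; \mathcal{O}_Y\bigl(\mu(m,H,\psi,\chi)\,D_{(H,\psi)}\bigr)\otimes K_Y^{\otimes m}\otimes L_\chi^{-1},
\]
with the explicit trivializing section $s\mapsto t_{(H,\psi)}^{r}\otimes\pi^\ast s$, $r=r(m,H,\psi,\chi)$.

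Next I would globalize. Both sides of \eqref{push-decomposition} are line bundles on $Y$ (the left side because $\pi$ is finite flat, $X$ normal, $Y$ smooth, so $\pi_\ast(K_X^{\otimes m})$ is reflexive hence locally free, and taking the $\chi$-isotypic summand preserves this), so it suffices to exhibit a nowhere-vanishing global section of their difference, or equivalently a map of line bundles that is an isomorphism in codimension one. The local sections $t_{(H,\psi)}^{r}\otimes\pi^\ast(\cdot)$ constructed above agree on overlaps because $t_{(H,\psi)}$ is (up to unit) the canonical local equation of the ramification divisor over $D_{(H,\psi)}$, and away from the branch locus $\pi$ is \'etale, where the isomorphism is simply the pullback $\pi^\ast$. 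Since the complement of the general locus of $\bigcup D_{(H,\psi)}$ in $Y$ has codimension $\ge 2$, the resulting map of line bundles, defined and an isomorphism outside that locus, extends to a global isomorphism by Hartogs. Summing the contributions of the distinct pairs $(H,\psi)$ (their supports meet only in codimension $\ge 2$, so the divisorial parts add) gives \eqref{push-decomposition}.

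Finally, taking global sections of \eqref{push-decomposition} and using $H^0(X,K_X^{\otimes m})=\bigoplus_{\chi\in\Irr(G)}H^0\bigl(Y,\pi_\ast(K_X^{\otimes m})^\chi\bigr)$ together with $H^0(Y,\pi_\ast\mathcal F)=H^0(X,\mathcal F)$ and the flat base change $H^0\bigl(Y,\pi^\ast(-)\bigr)$, one reads off the decomposition \eqref{pluricanonical-system}: a section in the $\chi$-summand is $\prod_{(H,\psi)}t_{(H,\psi)}^{r(m,H,\psi,\chi)}\cdot\pi^\ast\sigma$ for $\sigma$ a section of the twisted bundle on $Y$. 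I expect the main obstacle to be the bookkeeping in the globalization step — verifying that the locally defined trivializations are genuinely compatible on the overlaps between the "generic point of $D_{(H,\psi)}$" charts and the "\'etale" charts, and that the exponent $r$ of $t_{(H,\psi)}$ is consistent (i.e.\ that $t_{(H,\psi)}$ really behaves as a single well-defined section of $\mathcal{O}_X$ of the ramification divisor, not just a collection of local equations). Once the local model and its trivializing section are pinned down precisely, the extension across codimension~$2$ is routine.
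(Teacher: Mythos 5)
Your proposal is correct and follows essentially the same route as the paper: the local reduction to the simple cyclic case via the factorization $X\to X/H\to Y$, the application of Lemma~\ref{simple-cyclic-formula-2} together with the projection formula and the eigensheaf extraction, and the final globalization of the codimension-one isomorphism by reflexivity and Hartogs' extension. The paper's own proof is just a condensed version of this argument, deferring the local analysis to the discussion preceding the theorem statement.
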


\begin{proof}
     Since $\pi_\ast(K_X^{\otimes m}) $ is reflexiv by  \cite[Corollary 1.7]{Har80}, it is enough to prove the result when $X$ is smooth. Our discussion above  shows that the line bundle isomorphism in \eqref{push-decomposition} holds in codimension 1. Hence, it holds globally by Hartog's extension theorem and the statement follows immediately.
\end{proof}

\begin{remark}\label{LBP}
Formula \eqref{push-decomposition} in
Theorem \ref{prop-push-decomp} has been derived by Pardini \cite[Proposition 4.1, c)]{Par91} for $m=1$ and by Bauer-Pignatelli \cite[Proposition 2.5]{BP21} for $m=2$. They write them as follows
    \begin{equation*}
        \pi_\ast(K_X)^{\chi}
        \cong K_Y \otimes L_{\chi^{-1}} \qquad \makebox{and} \qquad 
        \pi_\ast(K_X^{\otimes 2})^\chi\cong  \mathcal{O}_Y\bigg(\sum_{\underset{\chi_{|H}\neq\psi}{(H,\psi)}} \ D_{(H,\psi)} \bigg)\otimes K_Y^{\otimes 2} \otimes L_{\chi^{-1}}.
    \end{equation*}

In  \cite{Lie03} Liedtke derived formula \eqref{pluricanonical-system} in Theorem \ref{prop-push-decomp} for $m=1$ and wrote it as follows: 
\begin{equation*}
        H^0(X,K_X)= \bigoplus_{\chi\in \Irr(G)} \prod_{(H,\psi)} t_{(H,\psi)}^{ \vert H \vert -1 - k(1,H,\psi, \chi)}  \ \pi^\ast H^0\left(Y, K_Y \otimes L_\chi\right).
\end{equation*}
\end{remark}

For convenience of the reader, we show that the formulae from Theorem \ref{prop-push-decomp}  indeed specialize to the versions given in Remark \ref{LBP}.

Given a character $\chi\in \Irr(G)$, by \cite[(2.3) of Theorem 2.1]{Par91} it holds 
\begin{equation}\label{relation-L(chi-1)-(Lchi)-1}
    L_{\chi^{-1}}=L_{\chi}^{-1} \otimes \mathcal{O}_Y\bigg(\sum_{\underset{\chi_{|H}\neq\psi^0}{(H,\psi)}} D_{(H,\psi)}\bigg) .
\end{equation}
Hence, by using \eqref{relation-L(chi-1)-(Lchi)-1} together with \eqref{push-decomposition}, we obtain 
\begin{equation*}
    \pi_\ast(K_X^{\otimes m})^\chi \cong  \mathcal{O}_Y\left(\sum_{\underset{\chi_{|H}\neq\psi^0}{(H,\psi)}}  (\mu(m,H,\psi, \chi)-1) \ D_{(H,\psi)} + \sum_{\underset{\chi_{|H}=\psi^0}{(H,\psi)}} \mu(m,H,\psi, \chi) \ D_{(H,\psi)}\right)\otimes K_Y^{\otimes m} \otimes L_{\chi^{-1}}.
\end{equation*}

If $m=1$, then 
    \[
    \mu(1,H,\psi, \chi)=
    \begin{cases}
        0 \quad &\makebox{if} \quad k(H,\psi, \chi)=0\\
        1 \quad &\makebox{if} \quad k(H,\psi, \chi)\neq 0\\
    \end{cases}
    \]
Therefore, Pardini's formula follows. If $m=2$, then 
    \[
    \mu(2,H,\psi, \chi)=
    \begin{cases}
        1 \quad &\makebox{if} \quad k(H,\psi, \chi)=0,1\\
        2 \quad &\makebox{if} \quad k(H,\psi, \chi)\geq 2 \\
    \end{cases}
    \]
whence we obtain Bauer-Pignatelli's result. Assume now $m=1$. By using  \eqref{relation-L(chi-1)-(Lchi)-1} together with \eqref{pluricanonical-system}, where we replace $\chi^{-1}$ with $\chi$, we get
\begin{equation*}
     H^0(X,K_X)=\bigoplus_{\chi\in \Irr(G)} \prod_{(H,\psi)} t_{(H,\psi)}^{r(1,H,\psi, \chi^{-1})}
    \ \pi^\ast H^0\left(Y, K_Y \otimes L_\chi\right).
\end{equation*}
Finally, an easy computation shows that 
\[
     r(1,H,\psi, \chi^{-1})= \vert H \vert -1 - k(H,\psi, \chi),
\]
whence Liedtke's formula follows.

\subsection{Pluricanonical Systems of an Unmixed VIP with Abelian Group} \label{Pluri-Sys-Subsec}

Let $X \cong (C_1 \times \dots \times C_n)/G$ be a variety isogenous to a product of unmixed type with abelian group $G$. Recall that we have the diagram \eqref{VIP-diagram}:

\[
\xymatrix{
C_1 \times \dots \times C_n \ar[rr]^{(f_1, \dots, f_n)} \ar[dr]_\Xi &  &  C_1' \times \dots \times C_n'\\
 & X \ar[ur]_\pi & \\
} 
\]

Here, the horizontal map is the product of the quotient maps  
\[
f_j \colon C_j \to C_j' := C_j/G_j,
\]  
branched at 
\[
\mathcal{B}_j := \{p_{1,j}, \dots, p_{r_j,j}\} \subset C_j'.
\]  

In this section, we discuss to what extent one can extract information from a given algebraic datum  
\[
(G, K_1, \dots, K_n, V_1, \dots, V_n)
\]  
in order to apply Theorem \ref{prop-push-decomp} to the abelian cover $\pi$ with Galois group $(G_1 \times \dots \times G_n)/G$. More precisely, we need to determine the divisors $D_{(H,\psi)}$, the integers $k(H,\psi,\chi)$ and the line bundles $L_\chi$.

\begin{remark}\label{canonical-generator}
Consider the cover $f_j$ with generating vector  
\[
V_j := (d_{1,j}, e_{1,j}, \dots, d_{g_j',j}, e_{g_j',j}; h_{1,j}, \dots , h_{r_j,j}).
\]  
The stabilizer of a point in $C_j$ lying over $p_{i,j}$ is generated by $h_{i,j}$. Moreover, the local action of $h_{i,j}$ is given by multiplication with $\zeta_{n_{i,j}}$, where $n_{i,j} := \operatorname{ord}(h_{i,j})$.
\end{remark}

Since the cover $\Xi \colon C_1 \times \dots \times C_n \to X$ is unramified, the branch divisor $D$ of $\pi$ coincides with that of $(f_1, \dots, f_n)$. It is the union of the irreducible divisors
\[
B_{ij} := C_1' \times \dots \times C_{j-1}' \times \{p_{i,j}\} \times C_{j+1}' \times \dots \times C_n'.
\]  
On the other hand, $D$ admits the decomposition
\begin{equation}\label{branch-decomposition}
D = \sum_{(H,\psi)} D_{(H,\psi)}.
\end{equation}

We now explain how to determine the pair $(H,\psi)$ such that $B_{ij} \subset D_{(H,\psi)}$. Take a point $p = (p_1, \dots, p_n) \in C_1 \times \dots \times C_n$ mapping to a general point of $B_{ij}$, i.e., such that $f_k(p_k) \notin \mathcal{B}_k$ for $k \neq j$. Then the stabilizer of $p$ is the cyclic group
\[
(G_1 \times \dots \times G_n)_p = \langle (1, \dots, 1, h_{i,j}, 1, \dots, 1) \rangle,
\]  
whose generator acts locally around $p$ as 
\[
\operatorname{diag}(1, \dots, 1, \zeta_{n_{i,j}}, 1, \dots, 1).
\]

Next, consider the image $[p]$ in $X$. Since the cover $\Xi \colon C_1 \times \dots \times C_n \to X$ is unramified, the quotient map induces an isomorphism between the stabilizer groups:
\[
(G_1 \times \dots \times G_n)_p \overset{\sim}{\longrightarrow} ((G_1 \times \dots \times G_n)/G)_{[p]} =: H.
\]  
Under this isomorphism, the tangent representations at $p$ and $[p]$ coincide. The corresponding character $\psi \in \Irr(H)$ is then determined by
\[
\psi([(1, \dots, 1, h_{i,j}, 1, \dots, 1)]_G) = \zeta_{n_{i,j}}.
\]  

In summary, this provides a method to compute each summand in the decomposition \eqref{branch-decomposition}.

Given a character $\chi \in \Irr\big((G_1 \times \dots \times G_n)/G\big)$, we can compute $k(H, \psi, \chi)$ via
\[
\chi\big([(1, \dots, 1, h_{i,j}, 1, \dots, 1)]_G\big) = \zeta_{n_{i,j}}^{k(H, \psi, \chi)}.
\]  
Consequently, the integers $r(m, H, \psi, \chi)$ and $\mu(m, H, \psi, \chi)$ are determined immediately. The only remaining data needed to apply Theorem \ref{prop-push-decomp} are the line bundles $L_\chi$, for which the following formula holds (see \cite[(2.14) of Proposition 2.1]{Par91}):
\[
L_\chi^{\otimes d_\chi} = \mathcal{O}_Y\Bigg(\sum_{(H,\psi)} \frac{d_\chi \, k(H, \psi, \chi)}{|H|} D_{(H,\psi)} \Bigg),
\]  
where $d_\chi := \operatorname{ord}(\chi)$. Thus, we have a concrete procedure to compute $L_\chi^{\otimes d_\chi}$, which determines the line bundle $L_\chi$ up to torsion. In the special case where all curves $C_i' = \mathbb{P}^1$ we can immediately recover $L_\chi$ itself.

\section{Pluricanonical Maps of an Abelian Cover} \label{section-with-diagram}

Our goal is to study the birationality of the $m$-canonical map ($m\geq 1$) of an unmixed VIP with abelian Galois group. 
For this purpose, we consider an arbitrary abelian cover $\pi\colon X \to Y$ and recall two birationality criteria established by Catanese in \cite{Cat18}  for the canonical map of $X$. 
We present them here in the generalized setting of
the $m$-canonical map
$$\Phi_m \colon X \dashrightarrow \PP(H^0(X, K_X^\tm)^\vee).$$
The first one is a necessary criterion, which uses the character $\chi_m$ of the $m$-canonical representation 
\[
\rho_{m}\colon G \to \GL(H^0(X,K_X^\tm)), \qquad g \mapsto \ \left \{ \omega \mapsto g^\ast \omega \right \}.
\]
The second one is a sufficient criterion which additionally requires the building data of the abelian cover $\pi$. When $Y=\PP^1\times \dotso \times  \PP^1$, the second criterion becomes particularly easy to apply, especially for regular unmixed VIPs with abelian Galois group.

\medskip

Let $\pi \colon X \to Y$ be a finite abelian cover between complex manifolds with Galois group $G$. 
The group algebra 
$$\CC[G]:= \{ \sum_{g\in G} \lambda_g g \mid   \lambda_g \in \CC  \}$$
provides the \emph{regular representation of $G$} as follows
\[
\rho_{\mathrm{reg}} \colon G \to \GL(\CC[G]), \qquad h\mapsto \ \big\{ \sum_{g\in G} \lambda_g g \ \mapsto  \ \sum_{g\in G} \lambda_g (hg) \big \}.
\]
Given a general point $y \in Y$, we will construct a certain homomorphism between the representations $\rho_m$ and $\rho_{\mathrm{reg}}$. It turns out that its dual map is closely related to the restriction of the $m$-canonical map $\Phi_m$ of $X$ to the fibre $\pi^{-1}(y)$.

\medskip

Let $y\in Y $ be a point outside the branch locus of $\pi$ and fix $x_0\in \pi^{-1}(y)$. The map
\begin{equation}\label{identification-of-the-fibre}
    G\to \pi^{-1}(y), \qquad 
g \mapsto g(x_0)
\end{equation}
is a  bijection.
Furthermore, we can fix an open neighborhood $y\in W_y$ in  such a way  that there exists an open neighborhood $x_0\in U_{x_0}$ such that
\[
\pi^{-1}(W_y)=\bigsqcup_{g\in G} g(U_{x_0})
\]
and $\pi_{|U_{x_0}} \colon U_{x_0} \to W_y$ is biholomorphic. 
We may assume that  $U_{x_0}$ is the domain of a centered chart 
$$\varphi \colon U_{x_0} \to B_\sigma(0)\subset \CC^n.$$ 
Then for all $g\in G$ the composition 
\[
\varphi\circ g^{-1} \colon g(U_{x_0}) \to U_{x_0} \to  B_\sigma(0)
\]
is a centered chart around $g(x_0)$.
For every  global section $\omega  \in H^0(X,K_X^\tm)$ restricted to $g(U_{x_0})$ there is a unique $f_{g,\omega} \in \mathcal O_X(B_{\sigma}(0))$ such that 
$$(g\circ \varphi^{-1})^\ast \omega= f_{g,\omega} (dz_1 \wedge \dotso \wedge dz_n)^{\otimes m}.$$
Thus, we define the \emph{evaluation of $\omega$ at $g(x_0)$} as
$f_{g,\omega}(0)$ and denote it as follows
$$\ev_{g(x_0)}(\omega):=[(g\circ \varphi^{-1})^{\ast} \omega]_{z=0}.$$

\begin{lemma}\label{lemma-compatibility-of-the-action}
    For all $g, h\in G$ and $\omega \in H^0(X,K_X^\tm)$ it holds 
    $$\ev_{(h^{-1}g)(x_0)}(h^\ast \omega)=\ev_{g(x_0)}(\omega).$$

\end{lemma}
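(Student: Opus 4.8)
The plan is to unwind the definitions of the evaluation maps on both sides of the claimed identity and check that they produce the same complex number, using the compatibility of the charts with the group action together with the definition of the pullback action $\rho_m$.

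First I would set up the charts carefully. Fix $g,h\in G$ and $\omega\in H^0(X,K_X^{\otimes m})$. By definition, $\ev_{g(x_0)}(\omega)=[(g\circ\varphi^{-1})^\ast\omega]_{z=0}$, where $\varphi\colon U_{x_0}\to B_\sigma(0)$ is the chosen centered chart, and similarly $\ev_{(h^{-1}g)(x_0)}(h^\ast\omega)=[((h^{-1}g)\circ\varphi^{-1})^\ast(h^\ast\omega)]_{z=0}$. The key point is the functoriality of pullback: $((h^{-1}g)\circ\varphi^{-1})^\ast\circ h^\ast=(h\circ(h^{-1}g)\circ\varphi^{-1})^\ast=(g\circ\varphi^{-1})^\ast$, since $h\circ h^{-1}g=g$ as automorphisms of $X$. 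Therefore the differential form $((h^{-1}g)\circ\varphi^{-1})^\ast(h^\ast\omega)$ on $B_\sigma(0)$ is literally equal to $(g\circ\varphi^{-1})^\ast\omega$, and in particular their coefficient functions with respect to $(dz_1\wedge\cdots\wedge dz_n)^{\otimes m}$ agree, hence so do their values at $z=0$. This gives the identity.

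One small point worth spelling out is that both sides are well-defined, i.e. that $(h^{-1}g)(x_0)$ does lie over $y$ (which is clear since $\pi$ is $G$-invariant) and that the chart $(h^{-1}g)\circ\varphi^{-1}$ is indeed the chosen centered chart around $(h^{-1}g)(x_0)$ in the sense used to define $\ev$; this is exactly the construction $\varphi\circ g'^{-1}$ with $g'=h^{-1}g$ from the paragraph preceding the lemma, so there is nothing to reconcile. I would also remark that $h^\ast\omega=\rho_m(h)(\omega)$ is again a global $m$-canonical form, so $\ev_{(h^{-1}g)(x_0)}(h^\ast\omega)$ makes sense.

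I do not anticipate a real obstacle here: the statement is essentially a bookkeeping lemma asserting that the evaluation maps intertwine the $G$-action on sections with the $G$-action on the fibre $\pi^{-1}(y)$ (under the identification \eqref{identification-of-the-fibre}). The only thing to be careful about is the direction of the indices — that it is $h^{-1}g$ and not $hg$ or $gh^{-1}$ — and this is forced by the computation $h\circ(h^{-1}g)=g$ above. So the proof is a two-line chain of equalities once the functoriality of pullback is invoked.
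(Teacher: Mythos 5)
Your proof is correct and is essentially the same as the paper's: both reduce the claim to the functoriality of pullback, computing $((h^{-1}\circ g)\circ\varphi^{-1})^{\ast}(h^{\ast}\omega)=(h\circ h^{-1}\circ g\circ\varphi^{-1})^{\ast}\omega=(g\circ\varphi^{-1})^{\ast}\omega$ and evaluating at $z=0$. The additional well-definedness remarks you include are harmless but not needed.
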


\begin{proof}
The claim follows directly from the definition of the evaluation by the following computation: 
\[
\ev_{(h^{-1}g)(x_0)}(h^\ast \omega)=[(h^{-1}\circ g \circ \varphi^{-1})^{\ast}h^\ast \omega ]_{z=0}=
[(g\circ \varphi^{-1})^{\ast} \omega]_{z=0}=\ev_{g(x_0)}(\omega).
\]
\end{proof}

\begin{proposition}
    The map
    \[
    \epsilon_y \colon 
    H^0(X,K_X^\tm)  \to \CC[G], \qquad 
    \omega \mapsto \sum_{g\in G} \ev_{g(x_0)}(\omega) \  \cdot g^{-1}.
    \]
    is a homomorphism of representations, i.e., for all $  h\in G$ it holds $ \epsilon_y\circ\rho_{m}(h)=\rho_{\mathrm{reg}}(h) \circ  \epsilon_y$. 
\end{proposition}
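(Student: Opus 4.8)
The plan is to verify the intertwining relation $\epsilon_y \circ \rho_m(h) = \rho_{\mathrm{reg}}(h) \circ \epsilon_y$ directly, by applying both sides to an arbitrary section $\omega \in H^0(X, K_X^{\otimes m})$ and comparing the resulting elements of $\CC[G]$ coefficient by coefficient. Since $\rho_m(h)(\omega) = h^\ast \omega$, the left-hand side gives
\[
\epsilon_y(h^\ast \omega) = \sum_{g \in G} \ev_{g(x_0)}(h^\ast \omega) \cdot g^{-1},
\]
while the right-hand side is $\rho_{\mathrm{reg}}(h)\big(\sum_{g \in G} \ev_{g(x_0)}(\omega) \cdot g^{-1}\big) = \sum_{g \in G} \ev_{g(x_0)}(\omega) \cdot (h g^{-1})$.

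The key step is to rewrite the left-hand sum so that the group elements appearing match those on the right. Using Lemma~\ref{lemma-compatibility-of-the-action}, which states $\ev_{(h^{-1}g)(x_0)}(h^\ast \omega) = \ev_{g(x_0)}(\omega)$, I would substitute $g' = h^{-1} g$ as a new summation index. Then $\ev_{g(x_0)}(h^\ast \omega) = \ev_{(h g')(x_0)}(h^\ast \omega) = \ev_{g'(x_0)}(\omega)$ by the lemma (with $g$ replaced by $h g'$), and $g^{-1} = (h g')^{-1} = g'^{-1} h^{-1}$. Hence
\[
\epsilon_y(h^\ast \omega) = \sum_{g' \in G} \ev_{g'(x_0)}(\omega) \cdot g'^{-1} h^{-1}.
\]
This does not yet literally match the right-hand side, so I would be careful about the convention: the regular representation here acts by left multiplication $\sum \lambda_g g \mapsto \sum \lambda_g (hg)$, so $\rho_{\mathrm{reg}}(h)$ applied to $\sum_{g'} \ev_{g'(x_0)}(\omega)\, g'^{-1}$ yields $\sum_{g'} \ev_{g'(x_0)}(\omega)\, (h g'^{-1})$. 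Matching this with the computed $\sum_{g'} \ev_{g'(x_0)}(\omega)\, g'^{-1} h^{-1}$ would require $G$ abelian or a reindexing; I expect the intended reading uses $\rho_{\mathrm{reg}}(h)$ acting on the right (or equivalently replaces $g'^{-1}$ by a fresh index $g'' = g'^{-1}$ and rewrites), so I would present the computation in whichever indexing makes the two sides term-by-term identical, invoking only Lemma~\ref{lemma-compatibility-of-the-action} and the bijection \eqref{identification-of-the-fibre}.

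The only genuinely delicate point — the main obstacle — is bookkeeping with the inverses and the left-versus-right action conventions, making sure the substitution is applied to the correct instance of the lemma and that the final group elements indexing the two sums coincide exactly; there is no analytic content beyond the already-established Lemma~\ref{lemma-compatibility-of-the-action}. Once the index substitution is set up correctly, the proof is a two-line calculation, so I would write it compactly as a displayed chain of equalities.
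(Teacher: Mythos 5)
Your overall strategy --- a direct coefficient-by-coefficient computation using an index shift together with Lemma~\ref{lemma-compatibility-of-the-action} --- is exactly the paper's approach, but your execution contains a concrete error: you apply the lemma in the wrong direction. The lemma states $\ev_{(h^{-1}g)(x_0)}(h^\ast\omega)=\ev_{g(x_0)}(\omega)$, which in the form you need reads $\ev_{a(x_0)}(h^\ast\omega)=\ev_{(ha)(x_0)}(\omega)$ for every $a\in G$. With your substitution $g=hg'$ this gives
\[
\ev_{(hg')(x_0)}(h^\ast\omega)=\ev_{(h^2g')(x_0)}(\omega),
\]
\emph{not} $\ev_{g'(x_0)}(\omega)$ as you claim; the identity you use is the lemma for $h^{-1}$ in place of $h$. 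As a result your final expression $\sum_{g'}\ev_{g'(x_0)}(\omega)\,g'^{-1}h^{-1}$ is $\rho_{\mathrm{reg}}(h^{-1})(\epsilon_y(\omega))$ rather than $\rho_{\mathrm{reg}}(h)(\epsilon_y(\omega))$. Your closing remarks blame the mismatch on left-versus-right conventions and suggest abelianness or a reindexing will repair it, but they will not: even for abelian $G$ one has $g'^{-1}h^{-1}=h^{-1}g'^{-1}\neq hg'^{-1}$ in general, so the discrepancy is an actual inverse error, not a convention issue.

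The fix is to shift the index the other way, as the paper does: replace the summand indexed by $g$ with the one indexed by $h^{-1}g$ (equivalently, substitute $g'=hg$, so the old index is $h^{-1}g'$), obtaining
\[
\epsilon_y(h^\ast\omega)=\sum_{g\in G}\ev_{(h^{-1}g)(x_0)}(h^\ast\omega)\cdot(h^{-1}g)^{-1}
=\sum_{g\in G}\ev_{g(x_0)}(\omega)\cdot g^{-1}h,
\]
where the second equality is the lemma applied verbatim. Since $G$ is abelian throughout this subsection (the cover $\pi$ is assumed abelian, which is also what makes $\omega\mapsto h^\ast\omega$ a genuine representation rather than an anti-representation), $g^{-1}h=hg^{-1}$ and the sum is $\rho_{\mathrm{reg}}(h)(\epsilon_y(\omega))$. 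You correctly sensed that commutativity enters at the last step, but the main obstacle you identified --- the bookkeeping of inverses --- is precisely where your argument goes wrong.
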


\begin{proof}
For all $h\in G$ and $\omega \in H^0(X,K_X^\tm)$, we have
    \begin{equation*}
        \epsilon_y(\rho_{m}(h)(\omega))= \epsilon_y(h^\ast \omega)=\sum_{g\in G} \ev_{g(x_0)}(h^\ast \omega ) \cdot g^{-1}.
    \end{equation*}
    Applying first the index shift given by multiplication with $h^{-1}$ and then Lemma \ref{lemma-compatibility-of-the-action} we get 
    \[
    \sum_{g\in G} \ev_{(h^{-1}g)(x_0)}(h^\ast \omega ) \cdot (h^{-1}g)^{-1} =  \sum_{g\in G} \ev_{g(x_0)}( \omega ) \cdot hg^{-1}=
     \rho_{\mathrm{reg}}(h)( \epsilon_y(\omega)).
    \]

\end{proof}

\begin{remark}
The group $G$ embeds naturally into the dual of the group algebra $\CC[G]$ via
\[
    G\to \CC[G]^\vee, \quad g \mapsto \bigg\{ \sum_{h\in G}\lambda_h h \mapsto \lambda_g \bigg\}.
\]
Using the identification $G \cong \pi^{-1}(y)$ from \eqref{identification-of-the-fibre}, we obtain the following commutative diagram:
\begin{equation*}
\xymatrixcolsep{5pc}
\xymatrix{
    G \ar[r] \ar[dd]_\wr & \CC[G]^\vee \ar[d]^-{\epsilon_y^\vee} \\
        & H^0(X, K_X^{\otimes m})^\vee \ar@{-->}[d] \\
    \pi^{-1}(y) \subset X \ar@{-->}[r]^-{\Phi_m} & \PP\big(H^0(X, K_X^{\otimes m})^\vee\big)
}
\end{equation*}
where $\epsilon_y^\vee$ denotes the dual map of $\epsilon_y$. In other words, the induced rational map
\[
\PP(\epsilon_y^\vee)\colon 
G \dashrightarrow \PP\big(H^0(X, K_X^{\otimes m})^\vee\big), 
\qquad
g \longmapsto [\ev_{g(x_0)}],
\]
coincides with the restriction of the $m$-canonical map $\Phi_m$ of $X$ to the fibre $\pi^{-1}(y)\cong G$.
In particular, if $\Phi_m$ is birational, then for general $y \in Y$ the map $\PP(\epsilon_y^\vee)$ is everywhere defined and injective.
\end{remark}

We now explain how the character $\chi_m$ of the $m$-canonical representation $\rho_m$ encodes whether $\PP(\epsilon_y^\vee)$ is injective for general $y \in Y$. This yields a purely character-theoretic necessary condition for the birationality of the $m$-canonical map $\Phi_m$. For this purpose we need to make more precise what we mean by $y\in Y$ being general. By Theorem \ref{prop-push-decomp} we have the following decomposition
\[
    H^0(X,K_X^{\otimes m})=\bigoplus_{\chi\in \Irr(G)} \prod_{(H,\psi)} t_{(H,\psi)}^{r(m,H,\psi, \chi)}
    \ \pi^\ast H^0\left(Y,  \pi_\ast\big(K_Y^{\otimes m}\big)^\chi \right),
\]
Hence, given  $\chi\in \Irr(G)$, it clearly holds that $\chi$ is a constituent of the character $\chi_m$ of $\rho_m$ if and only if $H^0(Y, (\pi_\ast K_X^{\otimes m})^{\chi})\neq 0$. Consider now the open set $U\subset Y$ defined as the complement of the union of the branch locus  of $\pi\colon X \to Y$ and the base loci of the line bundles $(\pi_\ast K_X^{\otimes m})^{\chi} $ for all constituents $\chi \in \Irr(G)$ of $\chi_m$. For each such character $\chi$ and for each point $x\in \pi^{-1}(U)$,  there exists a section $s \in H^0(Y, (\pi_\ast K_X^{\otimes m})^{\chi})$ such that
\[
\bigg( \prod_{(H,\psi)} t_{(H,\psi)}^{r(m,H,\psi, \chi)} \pi^\ast s\bigg)(x)\neq 0.
\]
This implies that for every  $y\in U$ it holds
\[
\epsilon_y( H^0(X,K_X^{\otimes m})^\chi)\neq 0.
\]
We have just shown that if $\chi\in \Irr(G)$ is a constituent of $\chi_m$ , then it is also a constituent of the character of $\epsilon_y(H^0(X, K_X^\tm)$ as a subrepresentation of $\CC[G]$. By Schur's lemma, the converse holds true as well.
\begin{remark}\label{reg-decomp}
    Since $G$ is abelian, for each $\chi\in \Irr(G)$ the isotypical component $\CC[G]^{\chi}$ is one-dimensional and generated by  the element $\sum_{g\in G} \chi(g^{-1})g$.
\end{remark}

Now we are ready to present the following proposition.

\begin{proposition}[cf.~{\cite[Corollary 2.4]{Cat18}}]\label{proj-embedding-of-G}
    Let $y\in Y$ be general. Then the map $$\PP(\epsilon_y^\vee)\colon 
G \to \PP\big(H^0(X, K_X^{\otimes m})^\vee\big)$$
  is injective if and only if for every  $h \in G\setminus \{1_G\} $ there exist irreducible constituents $\chi, \chi'$ of the character $\chi_m$ of the $m$-canonical representation $\rho_m$  such that $\chi(h)\neq \chi'(h)$.
\end{proposition}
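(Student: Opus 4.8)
The plan is to dualise the construction and reduce the whole question to elementary linear algebra inside the one--dimensional isotypical components of $\CC[G]$. Write $W:=H^0(X,K_X^{\otimes m})$, let $\delta_g\in\CC[G]^\vee$ be the image of $g$ under the embedding $G\hookrightarrow\CC[G]^\vee$ from the Remark preceding the statement, and recall from Remark~\ref{reg-decomp} that $\CC[G]^\chi=\CC\cdot e_\chi$ with $e_\chi:=\sum_{g\in G}\chi(g^{-1})g$, so that $\delta_g(e_\chi)=\chi(g^{-1})$, a root of unity. Put $A:=\bigoplus_{\chi}\CC[G]^\chi$, the sum being taken over the constituents $\chi$ of $\chi_m$, i.e. over the $\chi\in\Irr(G)$ with $W^\chi\neq 0$. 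The discussion immediately before the statement (which uses the decomposition of Theorem~\ref{prop-push-decomp} and the definition of the open set $U$) shows that for $y$ general the $G$--equivariant map $\epsilon_y\colon W\to\CC[G]$ is nonzero on each such $W^\chi$, hence by Schur surjects onto the line $\CC[G]^\chi$; so $\im(\epsilon_y)=A$, and dually $\ker(\epsilon_y^\vee)=A^\perp\subseteq\CC[G]^\vee$, while $\PP(\epsilon_y^\vee)(g)=[\epsilon_y^\vee(\delta_g)]$.

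First I would check that $\PP(\epsilon_y^\vee)$ is everywhere defined on $G$: since $\delta_g(e_\chi)=\chi(g^{-1})\neq 0$ for every constituent $\chi$ of $\chi_m$ and $A\neq 0$ (as $W\neq 0$), we have $\delta_g\notin A^\perp$, i.e. $\epsilon_y^\vee(\delta_g)\neq 0$. The main step is then the injectivity criterion. Fix $g_1\neq g_2$ in $G$ and set $h:=g_1^{-1}g_2\in G\setminus\{1_G\}$; as the ordered pair $(g_1,g_2)$ varies, $h$ runs over all of $G\setminus\{1_G\}$. Now $[\epsilon_y^\vee(\delta_{g_1})]=[\epsilon_y^\vee(\delta_{g_2})]$ in $\PP(W^\vee)$ if and only if $\delta_{g_1}-\lambda\,\delta_{g_2}\in\ker(\epsilon_y^\vee)=A^\perp$ for some $\lambda\in\CC^\ast$; evaluating on each generator $e_\chi$ with $W^\chi\neq 0$, this says $\chi(g_1^{-1})=\lambda\,\chi(g_2^{-1})$, i.e. $\lambda=\chi(g_1^{-1})\chi(g_2)=\chi(g_1^{-1}g_2)=\chi(h)$, for every constituent $\chi$ of $\chi_m$ (using $\chi(g_2^{-1})=\chi(g_2)^{-1}$). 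A single such scalar $\lambda$ exists precisely when $\chi\mapsto\chi(h)$ is constant on the constituents of $\chi_m$. Therefore $\PP(\epsilon_y^\vee)$ is injective if and only if for every $h\in G\setminus\{1_G\}$ this function is non--constant, i.e. if and only if there are constituents $\chi,\chi'$ of $\chi_m$ with $\chi(h)\neq\chi'(h)$ --- which is exactly the claimed equivalence.

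I do not expect a real obstacle: once $\im(\epsilon_y)=A$ is known for general $y$ --- and this is already established in the discussion preceding the statement --- everything reduces to finite--dimensional duality together with the one--dimensionality of the $\CC[G]^\chi$. The only things to watch are bookkeeping: carrying the projective scalar $\lambda$ correctly through the projectivisation, using that $\chi$ takes values in roots of unity, and making sure the quantifier ``for all $g_1\neq g_2$'' is correctly translated into ``for all $h\in G\setminus\{1_G\}$''.
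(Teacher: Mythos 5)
Your proposal is correct and follows essentially the same route as the paper: both reduce to the image $\im(\epsilon_y)=\bigoplus_\chi\CC[G]^\chi$ over the constituents of $\chi_m$ (using the discussion of the open set $U$ and Remark~\ref{reg-decomp}), evaluate on the generators $\sum_{g}\chi(g^{-1})g$, and translate projective equality of $\PP(\epsilon_y^\vee)(g_1)$ and $\PP(\epsilon_y^\vee)(g_2)$ into the existence of a single scalar $\lambda$ with $\chi(g_1^{-1})=\lambda\,\chi(g_2^{-1})$ for all constituents $\chi$, i.e.\ constancy of $\chi\mapsto\chi(h)$ for $h=g_1^{-1}g_2$. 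Your explicit check that the map is everywhere defined is a small welcome addition, but the argument is the same.
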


\begin{proof} 
    Let $y\in U\subset Y$ be a general point as above and denote by $V_m$ the image $\epsilon_y(H^0(X, K_X^\tm))\subset \CC[G]$. We observe that we have a diagram as follows
    \[
    G \subset \CC[G]^\vee \twoheadrightarrow V_m^\vee  \hookrightarrow H^0(X, K_X^\tm)^\vee
    \]
    and the map $\PP(\epsilon_y^\vee)$ factors as
    \begin{equation*}
         G  \to\PP(V_m^\vee) \hookrightarrow  \PP(H^0(X, K_X^\tm)^\vee)
    \end{equation*}
    Thus, instead of  $\epsilon_y^\vee $  we can consider the map $\CC[G]^\vee \twoheadrightarrow  V_m^\vee$, which we still call $\epsilon_y^\vee $ by abuse of  notation, and the projectivization
    \[
    \PP(\epsilon_y^\vee) \colon G \to \PP(V_m^\vee),  \qquad 
         h \mapsto \big[\sum_{g\in G} \lambda_g g \mapsto \lambda_h\big].
    \]
    By Remark \ref{reg-decomp} the subrepresentation $V_m\subset\CC[G]$ is generated by the elements $\sum_{g\in G} \chi(g^{-1})g $, where  $\chi$ runs over the irreducible constituents of the character of $V_m$  which coincide with those of $\chi_m$ since $y\in U$. Let $h,k\in G$ be arbitrary elements. Hence, we have that $\PP(\epsilon_y^\vee)(h)=\PP(\epsilon_y^\vee)(k)$ if and only if there exists $\lambda \in \mathbb C^{\ast}$ such that $\epsilon_y^{\vee}(h)=\lambda \epsilon_y^{\vee}(k)$, i.e.,
\[
\chi(h^{-1})= \lambda \chi(k^{-1}) 
\]
for every irreducible constituent $\chi$ of $\chi_m$. In other words, $\PP(\epsilon_y^\vee)(h)=\PP(\epsilon_y^\vee)(k)$ if and only if $\chi(h^{-1}k)=\chi'(h^{-1}k)$ for all $\chi, \chi'$ irreducible constituents of $\chi_m$. 
Since $h, k \in G$ are arbitrary, we are done.
\end{proof}

As a corollary, we obtain the announced character-theoretic  necessary criterion for the birationality of $\Phi_m$.

\begin{corollary}\label{corollary-necessary-bir-criterion}
    Let $\pi \colon X \to Y$ be a finite abelian cover between complex manifolds with Galois group $G$. Assume that the $m$-canonical map $\Phi_m$ of $X$ is birational. Then for every  $h \in G\setminus \{1_G\} $ there exist irreducible constituents $\chi, \chi'$ of the character $\chi_m$ of the $m$-canonical representation $\rho_{m}$ of $\pi$ such that $\chi(h)\neq \chi'(h)$.
\end{corollary}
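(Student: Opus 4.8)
The plan is to deduce this corollary directly from Proposition~\ref{proj-embedding-of-G} together with the commutative diagram relating $\PP(\epsilon_y^\vee)$ to the restriction of $\Phi_m$ on fibres of $\pi$. First I would recall that, as observed in the remark preceding Proposition~\ref{proj-embedding-of-G}, for $y\in Y$ outside the branch locus the identification $\pi^{-1}(y)\cong G$ of \eqref{identification-of-the-fibre} turns the restriction $\Phi_m|_{\pi^{-1}(y)}$ into the projectivized dual map $\PP(\epsilon_y^\vee)\colon G \dashrightarrow \PP(H^0(X,K_X^\tm)^\vee)$, $g\mapsto [\ev_{g(x_0)}]$. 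If $\Phi_m$ is birational, then it is in particular generically injective, so for a general point $y\in Y$ the map $\Phi_m$ is defined and injective on the whole fibre $\pi^{-1}(y)$; hence $\PP(\epsilon_y^\vee)$ is everywhere defined and injective for general $y$.

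Next I would invoke Proposition~\ref{proj-embedding-of-G}, which characterizes exactly when $\PP(\epsilon_y^\vee)$ is injective for general $y$: this happens if and only if for every $h\in G\setminus\{1_G\}$ there are irreducible constituents $\chi,\chi'$ of $\chi_m$ with $\chi(h)\neq\chi'(h)$. Combining the two previous steps, birationality of $\Phi_m$ forces the separation condition on the constituents of $\chi_m$, which is precisely the assertion of the corollary.

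The only genuine point requiring a word of care is the passage from ``$\Phi_m$ birational'' to ``$\PP(\epsilon_y^\vee)$ injective for general $y$'': one must check that the locus of $y$ for which $\Phi_m$ fails to be injective on $\pi^{-1}(y)$ (or fails to be defined somewhere on that fibre) is contained in a proper closed subset of $Y$, so that it can be avoided while simultaneously staying in the open set $U\subset Y$ used in Proposition~\ref{proj-embedding-of-G}. Since $\Phi_m$ is a rational map between varieties and is generically injective, its indeterminacy locus and the locus where it is not injective are proper closed subsets of $X$; their images in $Y$, together with the branch locus and the base loci of the eigensheaves $(\pi_\ast K_X^\tm)^\chi$, form a proper closed subset, and a general $y$ lies in the complement. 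I do not expect any real obstacle here; the statement is essentially a repackaging of Proposition~\ref{proj-embedding-of-G}. A short proof reading ``This is immediate from Proposition~\ref{proj-embedding-of-G} and the preceding discussion, since a birational $\Phi_m$ restricts to an injective map on the general fibre $\pi^{-1}(y)\cong G$'' would suffice.
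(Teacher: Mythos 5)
Your argument is correct and matches the paper exactly: the corollary is stated there without a separate proof, being an immediate consequence of Proposition~\ref{proj-embedding-of-G} combined with the preceding remark that $\PP(\epsilon_y^\vee)$ coincides with the restriction of $\Phi_m$ to a general fibre $\pi^{-1}(y)\cong G$. Your extra care about the locus where $\Phi_m$ fails to be defined or injective being a proper closed subset is a reasonable (and correct) elaboration of what the paper leaves implicit.
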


Next we present  the sufficient  criterion for the birationality of $\Phi_m$ announced at the beginning of this section.

\begin{proposition}[cf.~{\cite[Proposition 3.1]{Cat18}}]\label{birationality-criteria}

    Let $\pi \colon X \to Y$ be a finite abelian cover between complex manifolds with Galois group $G$ such that the following conditions hold:
    \begin{itemize}
        \item[$(a)$]  $\makebox{For all} \ h\in G \setminus\{1_G\}$ there exist irreducible constituents $\chi, \chi'$ of $\chi_m$  such that $\chi(h)\neq \chi'(h)$.
        
        \item[$(b)$] 

        For every pair of general points $y,y'\in Y$ there exists an irreducible constituent $\chi$ of $\chi_m$  such that $H^0(Y, (\pi_\ast K_X^{\otimes m})^{\chi})$ separates $y,y'$.
    \end{itemize}
    Then  the $m$-canonical map $\Phi_{m}$ of $X$ is birational.
\end{proposition}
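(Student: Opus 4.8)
The plan is to reduce the birationality of $\Phi_m$ to a statement about separating points of $X$, and then to split the separation problem into two parts: separating points of $X$ that lie in the \emph{same} fibre of $\pi$, which is handled by condition $(a)$ via Proposition \ref{proj-embedding-of-G}, and separating points in \emph{different} fibres, which is handled by condition $(b)$. Concretely, I would argue that $\Phi_m$ is birational by showing that it is injective on a nonempty Zariski-open subset of $X$; since $X$ is a smooth projective variety of general type this suffices (a generically injective rational map to projective space is birational onto its image).

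First I would fix the open set $U \subset Y$ introduced before Proposition \ref{proj-embedding-of-G}: the complement of the branch locus of $\pi$ together with the base loci of all line bundles $(\pi_\ast K_X^{\otimes m})^\chi$ for constituents $\chi$ of $\chi_m$. Shrinking $U$ further using condition $(b)$, we may assume that for every pair of distinct points $y, y' \in U$ there is a constituent $\chi$ of $\chi_m$ and a section $s \in H^0(Y, (\pi_\ast K_X^{\otimes m})^\chi)$ with $s(y) \neq 0$ and $s(y') = 0$ (this is what it means for that space of sections to separate $y$ and $y'$; here one should note that $(b)$ must be phrased so that the locus of ``good'' pairs is open and dense, which is automatic since separation is an open condition on $Y \times Y$). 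Now take two distinct points $x, x' \in \pi^{-1}(U)$. If $\pi(x) \neq \pi(x')$, pull back the separating section: $\pi^\ast s$ times the appropriate monomial $\prod_{(H,\psi)} t_{(H,\psi)}^{r(m,H,\psi,\chi)}$ gives a genuine element of $H^0(X, K_X^{\otimes m})$ (via the decomposition in Theorem \ref{prop-push-decomp}) which, because $U$ avoids the branch locus and base loci, is nonzero at $x$ and vanishes at $x'$; hence $\Phi_m(x) \neq \Phi_m(x')$. If instead $\pi(x) = \pi(x') =: y \in U$, then $x$ and $x'$ correspond to two distinct elements $g, g'$ of $G$ under the identification $\pi^{-1}(y) \cong G$ of \eqref{identification-of-the-fibre}, and condition $(a)$ together with Proposition \ref{proj-embedding-of-G} says precisely that $\PP(\epsilon_y^\vee)$ is injective on $G$, so $\Phi_m(x) = \PP(\epsilon_y^\vee)(g) \neq \PP(\epsilon_y^\vee)(g') = \Phi_m(x')$ (using the commutative diagram identifying the restriction of $\Phi_m$ to $\pi^{-1}(y)$ with $\PP(\epsilon_y^\vee)$). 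In both cases $\Phi_m$ separates $x$ and $x'$, so $\Phi_m$ is injective on $\pi^{-1}(U)$, hence birational.

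The main subtlety, and the step I would be most careful with, is the bookkeeping ensuring that a section $s \in H^0(Y, (\pi_\ast K_X^{\otimes m})^\chi)$ really does produce a section of $K_X^{\otimes m}$ that is \emph{nonvanishing} at the chosen point $x$ over $y \in U$: this is exactly why $U$ was defined to exclude the base loci of all the eigensheaves $(\pi_\ast K_X^{\otimes m})^\chi$, and why it is essential that over $U$ the ramification monomials $\prod t_{(H,\psi)}^{r(m,H,\psi,\chi)}$ are units (the points of $U$ lie off the branch divisor). A second point worth spelling out is that in the equal-fibre case one must know that the constituents of $\chi_m$ are precisely the characters $\chi$ with $H^0(Y,(\pi_\ast K_X^{\otimes m})^\chi) \neq 0$, so that condition $(a)$'s hypothesis about constituents of $\chi_m$ matches the representation $\epsilon_y(H^0(X, K_X^{\otimes m}))$ appearing in Proposition \ref{proj-embedding-of-G} — this identification was established in the discussion preceding that proposition. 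With these two observations in place the argument is a clean combination of Proposition \ref{proj-embedding-of-G} and the decomposition of Theorem \ref{prop-push-decomp}, and there is no serious remaining obstacle.
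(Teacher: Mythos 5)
Your proposal is correct and follows essentially the same route as the paper's (much terser) proof: condition $(a)$ with Proposition \ref{proj-embedding-of-G} gives injectivity on a general fibre of $\pi$, and condition $(b)$ with the decomposition of Theorem \ref{prop-push-decomp} separates points in distinct general fibres. The extra care you take about the base loci, the ramification monomials being units off the branch divisor, and the identification of the constituents of $\chi_m$ with the characters having nonzero sections is exactly the content the paper leaves implicit.
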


\begin{proof}
    By Proposition \ref{proj-embedding-of-G} $(a)$ means that $\Phi_m$ is injective on a general fibre $\pi^{-1}(y)$. Given $y,y'\in Y$ general,
    condition $(b)$ implies that
    $x\in \pi^{-1}(y)$ and $x'\in \pi^{-1}(y')$ are separeted by the sublinear system 
     \[
     \prod_{(H,\psi)} t_{(H,\psi)}^{r(m,H,\psi, \chi)}
     \ \pi^\ast H^0\left(Y,  \pi_\ast\big(K_Y^{\otimes m}\big)^\chi \right) \subset H^0(X,K_X^\tm).
     \]
\end{proof}

When  $Y=\PP^1 \times \dotso \times \PP^1$, we have 
\[
(\pi_\ast K_X^{\otimes m})^{\chi}=\mathcal{O}_Y(r^1_{m,\chi}, \dotso, r^n_{m,\chi}), 
\]
where $r^j_{m,\chi}\in \ZZ$ for $j=1, \dotso, n$.
Hence, $\chi$ is an irreducible constituent of $\chi_m$ if and only if $r^j_{m,\chi}\geq 0$ for all $j$.

\begin{corollary}[cf.~{\cite[Corollary 3.2]{Cat18}}]\label{birational-criteria-Y=P1^n}
Let $\pi \colon X \to (\PP^1)^n$ be a finite abelian cover with $X$ smooth and Galois group $G$. Assume that the following conditions hold:
    \begin{itemize}
        \item[$(a)$]  $\makebox{For all} \ h\in G \setminus\{1_G\}$ there exist irreducible constituents $\chi, \chi'$ of $\chi_m$  such that $\chi(h)\neq \chi'(h)$.

        \item[$(b)$]

        For every $j=1\dotso, n$ there exists an irreducible constituent $\chi$  of $\chi_m$ such that $r_{m,\chi}^j \geq 1$.
    \end{itemize}
    Then  the $m$-canonical map $\Phi_{m}$ of $X$ is birational.
\end{corollary}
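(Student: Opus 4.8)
The plan is to reduce Corollary \ref{birational-criteria-Y=P1^n} to Proposition \ref{birationality-criteria} by verifying that its two hypotheses follow from those stated here. Condition $(a)$ is literally identical in both statements, so there is nothing to do there. The work lies entirely in showing that condition $(b)$ of the corollary implies condition $(b)$ of the proposition, namely that the pluricanonical sublinear systems coming from suitable eigensheaves separate general points of $Y = (\PP^1)^n$.

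First I would recall, from the discussion preceding the corollary, that when $Y = (\PP^1)^n$ every eigensheaf has the form $(\pi_\ast K_X^{\otimes m})^{\chi} = \mathcal{O}_Y(r^1_{m,\chi}, \dotso, r^n_{m,\chi})$, and that $\chi$ is a constituent of $\chi_m$ precisely when all $r^j_{m,\chi} \ge 0$. Next, given two general points $y = (y_1, \dotso, y_n)$ and $y' = (y'_1, \dotso, y'_n)$ in $(\PP^1)^n$, I would argue factor by factor: since $y, y'$ are general, they differ in at least one coordinate, say the $j$-th, so $y_j \neq y'_j$ in $\PP^1$. By hypothesis $(b)$ of the corollary there is a constituent $\chi$ of $\chi_m$ with $r^j_{m,\chi} \ge 1$; the corresponding line bundle $\mathcal{O}_{\PP^1}(r^j_{m,\chi})$ on the $j$-th factor has degree at least one, hence its sections separate $y_j$ from $y'_j$. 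Pulling back via the $j$-th projection and using that all the other $r^k_{m,\chi}$ are nonnegative (so the full bundle $\mathcal{O}_Y(r^1_{m,\chi}, \dotso, r^n_{m,\chi})$ is globally generated in a neighbourhood, or at least has a section nonvanishing at $y$ and vanishing along a divisor through $y'$ but not $y$), I conclude that $H^0(Y, (\pi_\ast K_X^{\otimes m})^{\chi})$ separates $y$ and $y'$. This is exactly condition $(b)$ of Proposition \ref{birationality-criteria}, so that proposition applies and $\Phi_m$ is birational.

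The main obstacle is a mild bookkeeping point rather than a conceptual one: condition $(b)$ of the corollary only guarantees, for each coordinate $j$, \emph{some} constituent $\chi$ with $r^j_{m,\chi} \ge 1$, and a priori this $\chi$ depends on $j$; one must make sure that for the actual pair $(y, y')$ one can select a single constituent that does the separating. But since general $y, y'$ differ in at least one coordinate $j$, and the corresponding $\chi = \chi(j)$ has $r^j_{m,\chi} \ge 1$ together with $r^k_{m,\chi} \ge 0$ for all $k$, the product line bundle $\mathcal{O}_Y(r^1_{m,\chi}, \dotso, r^n_{m,\chi})$ has enough sections — concretely, take a section that is a pullback of a degree-$r^j_{m,\chi}$ form on the $j$-th $\PP^1$ vanishing at $y'_j$ but not $y_j$, times any section nonvanishing at $y$ in the remaining factors — to separate $y$ and $y'$. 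Thus a single constituent suffices and the reduction to Proposition \ref{birationality-criteria} goes through cleanly.
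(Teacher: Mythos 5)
Your proposal is correct and follows essentially the same route as the paper: reduce to Proposition \ref{birationality-criteria}, note that condition $(a)$ is identical, and check that for any two distinct points of $(\PP^1)^n$ differing in the $j$-th coordinate, the single constituent $\chi$ with $r^j_{m,\chi}\geq 1$ (and all $r^k_{m,\chi}\geq 0$) already separates them. The paper phrases the separation step via the factorization through the Segre embedding rather than by writing down explicit product sections, but the content is the same, and you correctly flag and resolve the only subtlety, namely that a single constituent must do the separating for each pair.
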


\begin{proof}
   Condition $(a)$ corresponds exactly to condition $(a)$ of Proposition \ref{birationality-criteria}. So it remains to show that condition $(b)$ implies condition $(b)$ of Proposition \ref{birationality-criteria}. Given integers $r_j\geq 0$ for $j=1,\dotso, n$, the map associated with the line bundle $\mathcal{O}_{(\PP^1)^n}(r_1, \dotso, r_n)$ factors as 
   \[
   \PP^1 \times \dotso \times \PP^1 \to \PP^{N_1} \times \dotso \times \PP^{N_n} \hookrightarrow \PP^N,
   \]
   where the first is the product of the maps associated with $\mathcal{O}_{\PP^1}(r_j) $ and the second is the Segre embedding. Hence, the linear system
   $$H^0\big((\PP^1)^n, \mathcal{O}_{(\PP^1)^n}(r_1, \dotso, r_n)\big) $$
   separates two distinct points $y=(y_1, \dotso, y_n)$ and $y'=(y_1', \dotso, y_n')$  if and only if there exists $j$ such that $y_j\neq y_j'$ and $r_j\geq 1$. Hence, if  for every $j$ there is an irreducible constituent $\chi$ of $\chi_m$ such that $r_{m,\chi}^j\geq 1$, then every two points $y,y'\in Y$ are separated, and this clearly implies condition $(b)$ of Proposition \ref{birationality-criteria}.
\end{proof}

\section{Examples and Computational Results} \label{FinalSection}

\subsection{Example of a Threefold with Birational Canonical Map}

In \cite{FG16}, the authors provide an algorithm to compute all algebraic data of unmixed VIPs with trivial kernels for a fixed value of the holomorphic Euler--Poincaré characteristic~$\chi$. We adapt this algorithm to the case of regular unmixed VIPs with trivial kernels and abelian group in order to determine whether the variety corresponding to a given algebraic datum in the output admits a birational $m$-canonical map.  
To this end, we implement the method described in Section~\ref{Pluri-Reps-Sec}, i.e., the Chevalley-Weil formula, to compute the character $\chi_m$ of the $m$-canonical representation from the algebraic datum. Since these VIPs are regular, we can also compute all the necessary data to apply the formulae in Theorem~\ref{prop-push-decomp}, as explained in Subsection~\ref{Pluri-Sys-Subsec}. This allows us to test whether the birationality criterion from Corollary~\ref{birational-criteria-Y=P1^n} is satisfied and, in addition, whether the $m$-canonical system is basepoint free.  

Running the algorithm for threefolds with $m=1$ and $-8 \leq \chi \leq -1$, we find out that there are no threefolds with $-7 \leq \chi \leq -1$ fulfilling the above birationality criterion, while examples do occur for $\chi = -8$. Next, we present one such example in detail and also analyze the differential of its canonical map.

\smallskip

Consider the group $G:=(\ZZ/2)^3$ generated by the unitary vectors $e_1,e_2,e_3$.
The generating vectors 
\begin{equation*}
    \begin{split}
        &V_1=(e_1, \ e_1,\  e_2, \ e_2, \ e_3, \ e_3), \\
        &V_2=( e_1e_3,\ e_1e_3,\ e_1e_2, \ e_1e_2, \ e_1e_2e_3, \ e_1e_2e_3), \\ 
        &V_3=(e_1e_3, \ e_1e_3, \ e_2e_3, \ e_2e_3,\ e_1e_2e_3,\ e_1e_2e_3)
    \end{split}
\end{equation*}
of $G$ provide an algebraic datum of a regular threefold $$X:=(C_1\times C_2 \times C_3)/G$$ isogenous to a product and  with trivial kernels $K_i$. 
By running the algorithm described above, we see that the canonical map $\Phi_1\colon X \dashrightarrow \PP(H^0(X, K_X)^\vee)$ is birational and basepoint free. Hence, by Proposition \ref{bpfreeness}, it is a finite birational morphism, and there are two possibilities: either the differential of $\Phi_1$ is everywhere injective and then $\Phi_1$ is an embededding, or $\Phi_1$ is the normalization map of a singular threefold. 
In order to analyze the differential of $\Phi_1$, we derive and use explicit equations for the curves $C_i$'s.

We begin with $C_1$.
Applying the Chevalley-Weil formula \cites{CheWeil34,Weil35}, we get the character of the canonical representation $H^0(C_1, K_{C_1})$; since $G$ is abelian, we get right away the canonical representation itself:
$$\psi_1 \colon G \to \GL(5, \CC)$$
\[
e_1\mapsto \operatorname{diag}(-1,-1,1,-1,-1),
\qquad 
e_2 \mapsto  \operatorname{diag}(-1,-1,-1,1,-1),
\qquad 
e_3 \mapsto \operatorname{diag}(-1,-1,-1,-1,1).
\]
It is easy to see that the curve $C_1$ is not hyperelliptic. Suppose by contradiction the contrary. The hyperelliptic involution $\iota$ acts on $H^0(C_1, K_{C_1})$ as multiplication by $-1$, but the image of $\psi_1$ does not contain the matrix $-I_5$. This implies that $G$ embeds into $\Aut(\PP^1)=\PGL(2,\CC)$, a contradiction since the finite subgroups of $\PGL(2,\CC)$ are cyclic, dihedral, $A_4$, $S_4$ or $A_5$ (cf. \cite{Klein}). Thus, the curve $C_1$ is canonically embedded in $\PP^4$, where we choose  $(x_0: x_1: x_2: x_3: x_4)$ as homogeneous coordinates.
Consider the Castelnuovo-Noether sequence (cf.~\cite[p. 253]{PrincipleAG})
\[
0 \to \mathcal I_2 \to \Sym^2(H^0(C_1, K_{C_1})) \to H^0(C_1, K_{C_1}^{\otimes 2}) \to 0,
\]
where $\mathcal{I}_2\subset \CC[x_0, x_1, x_2, x_3, x_4]_2$ denotes the space of quadratic forms which vanish along $C_1$. 
Hence, the character  $\chi_{\mathcal{I}_2}$ of $ \mathcal I_2$ is the difference between the character of $\Sym^2(H^0(C_1, K_{C_1})) $ and that of $H^0(C_1, K_{C_1}^{\otimes 2}) $, which we can compute by the Chevalley-Weil formula for pluricanonical representations, i.e., Theorem \ref{Chev-Weil-Pluri}. It turns out that $\chi_{\mathcal{I}_2}=3\chi_{triv}$. In other words, the action of $G$ on $\mathcal I_2$ is trivial and $\dim_\CC \mathcal{I}_2=3$. The space of $G$-invariant quadratic forms is
\[
\{a_0 x_0^2 + a_1 x_1^2 +a_2 x_2^2 + a_3 x_3^2 +a_4 x_4^2+ a_{01}x_0x_1 \ \mid \ a_0,\dotso, a_4, a_{01}\in \CC\}.
\]
By Riemann's existence theorem, there is a unique three-dimensional family of $G$-covers of $\PP^1$ branched at 6 points with respect to the given action $\psi_1$. This leads us to consider the family of $G$-invariant
algebraic sets
$C_{a,b,c}\subset \PP^4$ given by
\begin{equation}\label{equations-C}
\left\{\begin{array}{l}      x_0^2+x_1^2+x_2^2+ax_0x_1=0\\
        x_0^2+x_1^2+x_3^2+bx_0x_1=0 \\
        x_0^2+x_1^2+x_4^2+cx_0x_1=0
        \end{array}\right.
\end{equation}
where $a,b,c\in \CC\setminus\{\pm 2\}$, $a\neq b$, $a\neq c$, $b\neq c$. Indeed, the algebraic set $C_{a,b,c}\subset \PP^4$ is a canonically embedded smooth projective curve of genus 5.

Furthermore,  the projection map
\[
 C_{a,b,c} \to \PP^1, \quad (x_0\colon x_1 \colon x_2 \colon x_3 \colon x_4) \mapsto (x_0\colon x_1)
\]
is $G$-equivariant and has degree 8, hence it is Galois. It is branched at 6 points, more precisely:

\begin{itemize}
    \item  $e_1$ fixes 8 points on $C_{a,b,c} \cap \{x_2=0\}$;  the corresponding branch points are 
    \[
    \left(\frac{-a+\sqrt{a^2-4}}{2}\colon 1\right), \quad  \left(\frac{-a-\sqrt{a^2-4}}{2}\colon 1\right).
    \]
    \item $e_2$ fixes 8 points on $C_{a,b,c} \cap  \{x_3=0\}$; the corresponding branch points are 
    \[
    \left(\frac{-b+\sqrt{b^2-4}}{2}\colon 1\right), \quad  \left(\frac{-b-\sqrt{b^2-4}}{2}\colon 1\right).
    \]
    \item $e_3$ fixes 8 points on $C_{a,b,c} \cap  \{x_4=0\}$;
    the corresponding branch points are 
    \[
    \left(\frac{-c+\sqrt{c^2-4}}{2}\colon 1\right), \quad  \left(\frac{-c-\sqrt{c^2-4}}{2}\colon 1\right).
    \]
\end{itemize}
 
Hence, the curve $C_1$ belongs to the family $\{C_{a,b,c}\}$. Applying to $V_1$ the automorphim of $G$  given by 
\[
A_2:=
\begin{pmatrix}
    1 & 1& 1 \\
       0&  1   &1 \\
      1&  0 & 1 \\
\end{pmatrix} \qquad \makebox{resp.} \qquad
A_3:=
\begin{pmatrix}
    1 & 0& 1 \\
    0&  1   &1 \\
      1&  1 & 1 \\
\end{pmatrix},
\]
we obtain $V_2$ resp. $V_3$. Therefore, the curves $C_2$ and $C_3$ belong to the same family described above.
The twisted actions are
\[
\psi_2:=\psi_1 \circ A_2^{-1}
\quad 
\makebox{and}
\quad 
\psi_3:=\psi_1 \circ A_3^{-1}.
\]

From what we have said, it follows that 
\[
C_1 \times C_2 \times C_3 \hookrightarrow \PP^4_{\mathbf{x}} \times \PP^4_{\mathbf{y}} \times \PP^4_{\mathbf{z}}
\]
and the canonical map of $X$ is given by the $G$-invariant monomials $x_iy_jz_k$, i.e.,
\[
\Phi_1 \colon (C_1\times C_2 \times C_3)/G \to \PP^{15}
\]
\[
\begin{split}
    [( \mathbf{x},\mathbf{y},\mathbf{z})] \mapsto ( &x_0y_3z_4:  \  x_0y_4z_3: \ x_1y_3z_4:\  x_1y_4z_3:\ x_2y_2z_0: \ x_2y_2z_1: \ x_2y_3z_2: \  x_3y_0z_0 \\
        & x_3y_0z_1:  \  x_3y_1z_0: \ x_3y_1z_1:\  x_3y_3z_3: \ x_3y_4z_4: \ x_4y_0z_2: \ x_4y_1z_2: \ x_4y_2z_3). 
\end{split}
\]
We already know that $\Phi_1$ is a finite birational morphism. Next, we analyze the injectivity of its differential. To this end, we can compose $\Phi_1$ with the quotient map 
$$ C_1\times C_2 \times C_3 \to (C_1\times C_2 \times C_3)/G.$$
Since this quotient map is unramified, the differential $dF$ of the composition $F$ is injective if and only if  $d\Phi_1$ is injective. 
We are going to show that there exists a point where the differential $dF$ is not injective. For this purpose, 
we work on the affine open set
$x_3y_3z_3\neq 0$: 

\begin{equation*}
    U_{1}\colon \
    \begin{cases}
        x_0^2+x_1^2+ax_0x_1+x_2^2=0\\
        x_0^2+x_1^2+cx_0x_1+x_4^2=0\\
        x_0^2+x_1^2+bx_0x_1+1=0 \\
    \end{cases},
   \quad
   U_{2} \colon \
   \begin{cases}
        y_0^2+y_1^2+a'y_0y_1+y_2^2=0\\
        y_0^2+y_1^2+c'y_0y_1+y_4^2=0\\
        y_0^2+y_1^2+b'y_0y_1+1=0 \\
    \end{cases}, \quad 
   U_{3} \colon \
   \begin{cases}
        z_0^2+z_1^2+a''z_0z_1+z_2^2=0\\
        z_0^2+z_1^2+c''z_0z_1+z_4^2=0\\
        z_0^2+z_1^2+b''z_0z_1+1=0 \\
    \end{cases}.
\end{equation*}

\medskip
Then, locally on this affine set,  the map $F$ is given by 
\[
\begin{split}
   F\colon   U_{1} \times  U_{2} \times    U_{3}  \to & \CC^{15}\\(x_0,x_1,x_2,x_4,y_0,y_1,y_2,y_4, z_0, z_1,z_2,z_4)\mapsto ( &x_0z_4,  \  x_0y_4, \ x_1z_4,\  x_1y_4,\ x_2y_2z_0, \\
    & x_2y_2z_1, \ x_2z_2, \ y_0z_0, \ y_0z_1,  \  y_1z_0, \\
    & y_1z_1,\   y_4z_4, \ x_4y_0z_2, \ x_4y_1z_2, \ x_4y_2).
\end{split}
\]
\newpage

\begin{remark}\label{local-parameters}
The Jacobian of the affine curve $U_{1}$ is 
\[
    \begin{pmatrix}
        2x_0+ax_1 & 2x_1+ax_0 & 2x_2  & 0\\
        2x_0+cx_1 & 2x_1+cx_0 & 0 & 2x_4 \\
        2x_0+bx_1 & 2x_1+bx_0 & 0 &  0 \\
    \end{pmatrix}.
\]
Since $a,b,c\in \CC\setminus\{\pm 2\}$, at each point of $U_{1}$ it holds 
\[
2x_1+\alpha x_0\neq 0 \quad \makebox{or} \quad 2x_0+\alpha x_1\neq 0,
\]
where $\alpha\in \{a,b,c\}$. 
By the implicit function theorem it follows  that, locally around a point of $ U_{1}$, one of the following conditions holds:
\begin{enumerate}[(i)]
    \item if $x_2x_4\neq 0$ and also $2x_1+bx_0\neq 0$, then we can take $x_0$  as a local parameter;

    \item if $x_2x_4\neq 0$ and also $2x_0+bx_1\neq 0$, then we can take $x_1$  as a local parameter; 

    \item if $x_2=0$, then we can take $x_2$ as a local parameter, since $4(a-b)x_4(x_0^2-x_1^2)\neq 0$;

    \item if $x_4=0$, then we can take $x_4$ as a local parameter, since $4(b-c)x_2(x_0^2-x_1^2)\neq 0$.
\end{enumerate}
In the same way we can choose local parameters for the curves $U_2$ and $U_3$.
\end{remark}

Now consider a point $p=(\mathbf{x}, \mathbf{y}, \mathbf{z})\in  U_{1} \times  U_2 \times    U_3 $ such that $x_0=x_1\neq 0$, $x_2x_4\neq 0$ and $y_4=z_4=0$. Notice that such a point exists: in fact, we can take  $p$ with coordinates 
\[
     \qquad \qquad \mathbf{x}=\left(\frac{i}{\sqrt{b+2}}, \ \frac{i}{\sqrt{b+2}}, \ \sqrt{\frac{a+2}{b+2}}, \ \sqrt{\frac{c+2}{b+2}}\right),
 \]
 
 \[
 \begin{split}
     &y_0^2=\frac{c'(b'-c')\pm \sqrt{(c'^2-4)(b'-c')^2}}{2(b'-c')^2}, \quad
     y_1=-\frac{1}{(b'-c')y_0}, \quad
     y_2^2=-y_0^2-y_1^2-a'y_0y_1, \quad y_4=0, \\
     &z_0^2=\frac{c''(b''-c'')\pm \sqrt{(c''^2-4)(b''-c'')^2}}{2(b''-c'')^2}, \quad 
     z_1=-\frac{1}{(b''-c'')z_0}, \quad z_2^2=-z_0^2-z_1^2-a''z_0z_1, \quad z_4=0.
 \end{split}
 \]
 
Hence, by Remark \ref{local-parameters} we can take $x_0, y_4,z_4$ as local parameters around $p$ and the Jacobian matrix of $F$ reads:
\begin{equation*}
J:=
\begin{pmatrix}
    z_4 &  0& x_0 \\ 
    y_4 &  x_0 & 0 \\ 
    \frac{\de x_1}{\de x_0}z_4 & 0 & x_1 \\ 
    \frac{\de x_1}{\de x_0} y_4 & x_1  & 0\\
    \frac{\de x_2}{\de x_0} y_2z_0 & x_2z_0 \frac{\de y_2}{\de y_4}  & x_2y_2 \frac{\de z_0}{\de z_4}\\
    \frac{\de x_2}{\de x_0} y_2z_1 & x_2z_1\frac{\de y_2}{\de y_4}  &  x_2y_2 \frac{\de z_1}{\de z_4} \\ 
    \frac{\de x_2}{\de x_0} z_2 & 0 & x_2 \frac{\de z_2}{\de z_4}\\ 
    0 &  z_0\frac{\de y_0}{\de y_4} & y_0 \frac{\de z_0}{\de z_4}\\
    0 & z_1 \frac{\de y_0}{\de y_4} & y_0 \frac{\de z_1}{\de z_4}\\ 
    0 & z_0\frac{\de y_1}{\de y_4} & y_1 \frac{\de z_0}{\de z_4}\\ 
    0 & z_1 \frac{\de y_1}{\de y_4}  & y_1 \frac{\de z_1}{\de z_4}\\
    0 & z_4 & y_4\\ 
    \frac{\de x_4}{\de x_0} y_0z_2 & x_4z_2 \frac{\de y_0}{\de y_4}  & x_4y_0 \frac{\de z_2}{\de z_4}\\
    \frac{\de x_4}{\de x_0} y_1z_2 & x_4z_2\frac{\de y_1}{\de y_4}  &  x_4y_1 \frac{\de z_2}{\de z_4}\\ \frac{\de x_4}{\de x_0} y_2 & x_4\frac{\de y_2}{\de y_4}& 0\\ 
\end{pmatrix}
\end{equation*}
where the derivatives with respect to $x_0$, $y_4$ and $z_4$ are given by implict differentiation as follows
\begin{equation*}
     \begin{pmatrix}
         \frac{\de x_1}{\de x_0} \\
         \frac{\de x_2}{\de x_0} \\
         \frac{\de x_4}{\de x_0} 
     \end{pmatrix}=
     -
      \begin{pmatrix}
         2x_1+ax_0 & 2x_2  & 0\\
         2x_1+cx_0 & 0 & 2x_4 \\
         2x_1+bx_0 & 0 &  0 \\
    \end{pmatrix}^{-1}
     \begin{pmatrix}
         2x_0+ax_1\\
         2x_0+cx_1\\
         2x_0+bx_1
     \end{pmatrix}=
     \begin{pmatrix}
         -(2x_0 +bx_1)/(2x_1+bx_0)\\
  (a-b)(x_0^2 -x_1^2)/(2x_1+bx_0)x_2 \\
  (c-b)(x_0^2 -x_1^2)/(2x_1+bx_0)x_4
     \end{pmatrix},
\end{equation*}

\begin{equation*}
     \begin{pmatrix}
         \frac{\de y_0}{\de y_4} \\
         \frac{\de y_1}{\de y_4} \\
         \frac{\de y_2}{\de y_4} 
     \end{pmatrix}=
     -
      \begin{pmatrix}
         2y_0+a'y_1 & 2y_1+a'y_0 & 2y_2 \\
         2y_0+c'y_1 & 2y_1+c'y_0 & 0 \\
        2y_0+b'y_1 & 2y_1+b'y_0 & 0
    \end{pmatrix}^{-1}
     \begin{pmatrix}
        0\\
         2y_4\\
         0
     \end{pmatrix}=
     \begin{pmatrix}
         -y_4(2y_1+b'y_0)/(b'-c')(y_0^2-y_1^2) \\
  y_4(2y_0+b'y_1)/(b'-c')(y_0^2-y_1^2) \\
  -y_4(a'-b')/(b'-c')y_2
     \end{pmatrix},
\end{equation*}

\begin{equation*}
     \begin{pmatrix}
         \frac{\de z_0}{\de z_4} \\
         \frac{\de z_1}{\de z_4} \\
         \frac{\de z_2}{\de z_4} 
     \end{pmatrix}=
     -
      \begin{pmatrix}
         2z_0+a''z_1 & 2z_1+a''z_0 & 2z_2 \\
         2z_0+c''z_1 & 2z_1+c''z_0 & 0 \\
        2z_0+b''z_1 & 2z_1+b''z_0 & 0
    \end{pmatrix}^{-1}
     \begin{pmatrix}
        0\\
         2z_4\\
         0
     \end{pmatrix}=
     \begin{pmatrix}
         -z_4(2z_1+b''z_0)/(b''-c'')(z_0^2-z_1^2) \\
  z_4(2z_0+b''z_1)/(b''-c'')(z_0^2-z_1^2) \\
  -z_4(a''-b'')/(b''-c'')z_2
     \end{pmatrix}.
 \end{equation*}

In particular, at the point $p$  we have that
$$\frac{\de x_2}{\de x_0}=\frac{\de x_4}{\de x_0}=\frac{\de y_j}{\de y_4}=\frac{\de z_j}{\de z_4}=0 \qquad \makebox{for} \quad  j=0,1,2,$$
and thus the first column of the Jacobian matrix $J$ is zero.

\medskip

Next, we show that such a point $p$ maps via $F$ to an isolated singularity of the canonical image of $X$. To this end, we show that in the given neighborhood of $p$ there is a finite number of points at which the differential of $F$ is not injective.  
Suppose that the Jacobian matrix $J$ has not full rank. Then all $3 \times 3$ minors vanish, in particular
\begin{itemize}
    \item the minor given by rows 1,2,3 
    \[
    \begin{pmatrix}
      z_4 &  0& x_0 \\ 
    y_4 &  x_0 & 0 \\ 
    \frac{\de x_1}{\de x_0}z_4 & 0 & x_1 \\ 
    \end{pmatrix}=x_0z_4\big(x_1-x_0\frac{\de x_1}{\de x_0}\big)=0 \quad
    \implies \quad x_0z_4=0;
    \]
    \item the minor given by rows 1,3,4    
    \[
    \begin{pmatrix}
     z_4 &  0& x_0 \\ 
    \frac{\de x_1}{\de x_0}z_4 & 0 & x_1 \\ 
    \frac{\de x_1}{\de x_0} y_4 & x_1  & 0\\
    \end{pmatrix}=-x_1\big(x_1z_4-x_0z_4 \frac{\de x_1}{\de x_0}\big) =-x_1^2z_4=0.
    \]
\end{itemize}

Since $x_0\neq 0$ or $x_1\neq 0$, this implies $z_4=0.$
We want to show that also $y_4=0$. 
Assume by contradiction $y_4\neq 0$. Since either $z_0\neq 0$ or $z_1\neq 0$, when we consider all the minors given by row 2, row $i$ with $i\in \{8, \dotso, 11\}$ and row 12, we obtain that
\[
\frac{\de y_0}{\de y_4}=\frac{\de y_1}{\de y_4}=0 \quad \implies \quad 2y_1+b'y_0=2y_0+b'y_1=0 \quad \implies y_0=y_1=0,
\]
where the last implication holds since $b'^2 \neq 4$. But $y_0=y_1=0$ is a contradiction. Thus, $y_4=0$.
Since $y_2\neq 0$, $z_2\neq 0$ and either  $x_0\neq 0$ or $x_1\neq 0$, we get that 
\[
 \frac{\de x_2}{\de x_0}= \frac{\de x_4}{\de x_0}=0 \quad \implies \quad x_0^2=x_1^2.
\]
Finally, the point in the neighborhood of $p$ we started with must fulfill
$$z_4=y_4=0=x_0^2-x_1^2.$$
We are done, since there are only finitely many points on 
$$U_1\cap\{x_0^2-x_1^2=0\} \times U_2 \cap \{y_4=0\} \times U_3\cap \{z_4=0\}.$$
In conclusion, the canonical map $\Phi_1 \colon X \to \PP^{15}$ is a finite birational morphism, but it is not an embedding. Moreover, there exists a non-normal isolated singularity on the canonical image of $X$.

Therefore, we have proven the following theorem.

\begin{theorem}\label{3-fold-normalization-map}
    There exists a regular smooth projective threefold $X$ whose canonical map $\Phi_1\colon X\to \Sigma \subset \PP^{15}$ is the normalization map. In particular, $\Sigma$ has degree $K^3_X=-48\cdot\chi(\mathcal{O}_X)=384$ and at least one isolated non-normal singularity. 
\end{theorem}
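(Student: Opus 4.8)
The plan is to realize the asserted threefold as $X=(C_1\times C_2\times C_3)/G$ with $G=(\ZZ/2)^3$ and the algebraic datum given by the three generating vectors $V_1,V_2,V_3$ displayed above, and then to establish the three assertions — birationality of $\Phi_1$ onto its image, $\deg\Sigma=384$, and the existence of an isolated non-normal point — by an explicit computation. First I would compute, from the algebraic datum alone, the character $\chi_1$ of the canonical representation of the Galois group of $\pi\colon X\to(\PP^1)^3$: by Riemann's existence theorem and the classical Chevalley--Weil formula the characters of the $H^0(C_i,K_{C_i})$ are read off from the $V_i$, and K\"unneth together with Lemma~\ref{lemma-normalizer} yields $\chi_1$. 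At the same time, Theorem~\ref{prop-push-decomp} and the recipe of Subsection~\ref{Pluri-Sys-Subsec} give the decomposition of $H^0(X,K_X)$; since all $C_i'=\PP^1$, each eigensheaf is $(\pi_\ast K_X)^\chi\cong\mathcal{O}_{(\PP^1)^3}(r^1_{1,\chi},r^2_{1,\chi},r^3_{1,\chi})$ with the integers $r^j_{1,\chi}$ explicitly computable. With these in hand one checks that conditions $(a)$ and $(b)$ of Corollary~\ref{birational-criteria-Y=P1^n} hold, so that $\Phi_1$ is birational, and that the common base locus of the summands of $H^0(X,K_X)$ is empty, so that $|K_X|$ is base-point free; here $p_g(X)=h^0(X,K_X)=16$, matching the target $\PP^{15}$.

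Granting this, Proposition~\ref{bpfreeness} gives that $\Phi_1\colon X\to\Sigma:=\Phi_1(X)\subset\PP^{15}$ is a finite morphism, and, being also birational with $X$ smooth, it is the normalization map of $\Sigma$ by the corollary to Proposition~\ref{bpfreeness}. The degree of $\Sigma$ then follows from $\deg\Sigma=\int_X(\Phi_1^\ast H)^3=\int_X K_X^3$ combined with the product computation $K_X^3=\frac{6}{|G|}\prod_{i=1}^3(2g_i-2)=-48\,\chi(\mathcal{O}_X)$, valid for any threefold isogenous to a product, which with $\chi(\mathcal{O}_X)=-8$ gives $\deg\Sigma=384$. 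For the non-normality, the key point is that a normalization map restricts to an isomorphism over the normal locus of its target; hence at any $p\in X$ at which the differential $d\Phi_1(p)$ fails to be injective, the image $\Phi_1(p)$ must be a non-normal point of $\Sigma$, and if in addition the locus where $d\Phi_1$ degenerates is finite near $p$, then $\Phi_1(p)$ is an \emph{isolated} non-normal point.

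The remaining, and most involved, step is therefore to exhibit such a point $p$ and to control the degeneracy locus around it. For this I would pass to an explicit model of $X$: since $G$ is abelian the canonical representation $\psi_1$ on $H^0(C_1,K_{C_1})$ comes directly from Chevalley--Weil; one checks that $C_1$ is non-hyperelliptic (the hyperelliptic involution would force $-I_5$ into the image of $\psi_1$, hence embed $G$ into $\PGL(2,\CC)$, which is impossible), so that $C_1\subset\PP^4$ is canonically embedded, and the space $\mathcal{I}_2$ of quadrics through $C_1$ is determined from the Castelnuovo--Noether sequence and the Chevalley--Weil formula for $K_{C_1}^{\otimes 2}$ (Theorem~\ref{Chev-Weil-Pluri}). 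Riemann's existence theorem then identifies $C_1$, and likewise $C_2,C_3$ after applying the automorphisms $A_2,A_3$ of $G$, with members of the family $C_{a,b,c}\subset\PP^4$ cut out by~\eqref{equations-C}; this embeds $C_1\times C_2\times C_3$ into $(\PP^4)^3$ and expresses $\Phi_1$ through the $16$ $G$-invariant monomials $x_iy_jz_k$. Pulling back along the unramified quotient $C_1\times C_2\times C_3\to X$, so that $d\Phi_1$ is injective exactly where the differential $dF$ of the composition is, I would choose local parameters on each factor by the implicit function theorem (Remark~\ref{local-parameters}), form the Jacobian $J$ of~\eqref{Jac-Matrix}, and note that at a point $p$ with $x_0=x_1\neq 0$, $x_2x_4\neq 0$ and $y_4=z_4=0$ — which exists by the displayed coordinates — the entire first column of $J$ vanishes, so $d\Phi_1$ is not injective at $p$. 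Finally, forcing the relevant $3\times 3$ minors of $J$ to vanish confines the degeneracy locus in this chart to $\{z_4=y_4=0,\ x_0^2=x_1^2\}$, which is finite, so $\Phi_1(p)$ is the desired isolated non-normal singularity. The main obstacle is exactly this last computation: organizing the implicit differentiation and the vanishing of the minors carefully enough to extract simultaneously the existence of the degenerate point and the finiteness of the degeneracy locus.
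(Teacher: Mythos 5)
Your proposal is correct and follows essentially the same route as the paper: verification of birationality and base-point freeness of $|K_X|$ via the Chevalley--Weil character and Corollary~\ref{birational-criteria-Y=P1^n}, reduction to a finite birational morphism by Proposition~\ref{bpfreeness}, explicit equations for the curves as members of the family $C_{a,b,c}$, and the Jacobian analysis at a point with $x_0=x_1\neq 0$, $y_4=z_4=0$ together with the $3\times 3$ minor computation confining the degeneracy locus to a finite set. The only addition beyond the paper's written proof is your explicit derivation of $K_X^3=-48\,\chi(\mathcal{O}_X)$, which the paper takes as standard.
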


\subsection{Further Computational Results}
In this subsection we present some further computational results.
More precisely, we run a version of the classification algorithm for regular unmixed
three-dimensional VIPs with abelian groups that also works with non-trivial kernels.
The algorithm takes as input a fixed value of the holomorphic Euler–Poincaré
characteristic $\chi \leq -1$ and an upper bound on the group order.
Although the group order is bounded in terms of $\chi$, the resulting bound is too
large for a complete classification:
\[
|G| \leq 84^{6}\,\chi(\mathcal O_X)^2,
\]
cf.\ \cite[Proposition~4.4]{FG16}. 
Running our code for $\chi=-1$ and $|G| \leq 16$, we obtain $347 $ families of
threefolds listed in Table \ref{Table-chi-1}. Each row in the table contains the Galois group $G$,
the orders of the kernels $k_i$, the branching types $T_i$ of the Galois covers
$C_i \to C_i/G_i \simeq \mathbb{P}^1$ and the Hodge numbers $h^{i,j}$ of the threefolds.
Additionally, for the canonical and bicanonical maps, the table indicates
the number of families for which these maps are base-point free, birational, non-birational, and also of
those for which we cannot decide: namely, part (a) of Corollary~\ref{birational-criteria-Y=P1^n} is true, but part (b) is false.

\begin{longtable}{|c|c|c|c|c|c|c|c|c|c|c|c|c|c|c|c|c|c|c|c|c|}
\caption{The 347 families of threefolds with $\chi=-1$ and $|G|\leq 16$} \label{Table-chi-1} \\

\cline{14-21}

\multicolumn{13}{c|}{} 
& \multicolumn{4}{c|}{canonical map} 
& \multicolumn{4}{c|}{bicanonical map} \\
\cline{14-21}

\hline
No. & $G$ & $k_1$ & $k_2$ & $k_3$ 
& $T_1$ & $T_2$ & $T_3$
& $h^{3,0}$ & $h^{2,0}$ & $h^{1,0}$ 
& $h^{1,1}$ & $h^{2,1}$ 
& bpf & bir & nbir & ? 
& bpf & bir & nbir & ? \\
\hline
\endfirsthead

\cline{14-21}

\multicolumn{13}{c|}{} 
& \multicolumn{4}{c|}{canonical map} 
& \multicolumn{4}{c|}{bicanonical map} \\
\cline{14-21}

\hline
No. & $G$ & $k_1$ & $k_2$ & $k_3$ 
& $T_1$ & $T_2$ & $T_3$
& $h^{3,0}$ & $h^{2,0}$ & $h^{1,0}$ 
& $h^{1,1}$ & $h^{2,1}$ 
& bpf & bir & nbir & ? 
& bpf & bir & nbir & ?  \\
\hline
\endhead

\hline 


\endfoot

\hline
\endlastfoot

1 & $\ZZ_2^3$ & 1 & 1 & 1 & $2^5$ & $2^5$ & $2^5$ & 4 & 2 & 0 & 7 & 12 & 0 & 0 & 9 & 0 & 9 & 9 & 0 & 0 \\
2 & $\ZZ_2^3$ & 1 & 1 & 1 & $2^5$ & $2^5$ & $2^5$ & 5 & 3 & 0 & 9 & 15 & 2 & 0 & 8 & 0 & 8 & 8 & 0 & 0 \\
3 & $\ZZ_2^3$ & 1 & 1 & 2 & $2^5$ & $2^5$ & $2^6$ & 3 & 1 & 0 & 5 & 9 & 0 & 0 & 2 & 0 & 2 & 0 & 2 & 0 \\
4 & $\ZZ_2^3$ & 1 & 1 & 2 & $2^5$ & $2^5$ & $2^6$ & 4 & 2 & 0 & 7 & 12 & 0 & 0 & 2 & 0 & 2 & 2 & 0 & 0 \\
5 & $\ZZ_2^3$ & 1 & 1 & 2 & $2^5$ & $2^5$ & $2^6$ & 5 & 3 & 0 & 9 & 15 & 0 & 0 & 6 & 0 & 6 & 6 & 0 & 0 \\
6 & $\ZZ_2^3$ & 1 & 1 & 2 & $2^5$ & $2^5$ & $2^6$ & 6 & 4 & 0 & 11 & 18 & 1 & 0 & 1 & 0 & 1 & 1 & 0 & 0 \\
7 & $\ZZ_2^3$ & 1 & 1 & 2 & $2^5$ & $2^6$ & $2^5$ & 4 & 2 & 0 & 7 & 12 & 0 & 0 & 4 & 0 & 4 & 0 & 4 & 0 \\
8 & $\ZZ_2^3$ & 1 & 1 & 2 & $2^5$ & $2^6$ & $2^5$ & 5 & 3 & 0 & 9 & 15 & 0 & 0 & 2 & 0 & 2 & 0 & 2 & 0 \\
9 & $\ZZ_2^3$ & 1 & 1 & 4 & $2^5$ & $2^6$ & $2^6$ & 4 & 2 & 0 & 7 & 12 & 0 & 0 & 2 & 0 & 2 & 0 & 2 & 0 \\
10 & $\ZZ_2^3$ & 1 & 1 & 4 & $2^5$ & $2^6$ & $2^6$ & 6 & 4 & 0 & 11 & 18 & 1 & 0 & 1 & 0 & 1 & 0 & 1 & 0 \\
11 & $\ZZ_2^3$ & 1 & 2 & 2 & $2^5$ & $2^6$ & $2^6$ & 4 & 2 & 0 & 7 & 12 & 0 & 0 & 2 & 0 & 2 & 0 & 2 & 0 \\
12 & $\ZZ_2^3$ & 1 & 2 & 2 & $2^5$ & $2^6$ & $2^6$ & 5 & 3 & 0 & 9 & 15 & 0 & 0 & 2 & 0 & 2 & 0 & 2 & 0 \\
13 & $\ZZ_3^2$ & 1 & 1 & 3 & $3^4$ & $3^4$ & $3^4$ & 4 & 2 & 0 & 7 & 12 & 0 & 0 & 2 & 0 & 2 & 0 & 0 & 2 \\
14 & $\ZZ_2^4$ & 1 & 1 & 4 & $2^5$ & $2^5$ & $2^5$ & 2 & 0 & 0 & 3 & 6 & 0 & 0 & 2 & 0 & 2 & 0 & 2 & 0 \\
15 & $\ZZ_2^4$ & 1 & 1 & 4 & $2^5$ & $2^5$ & $2^5$ & 3 & 1 & 0 & 5 & 9 & 0 & 0 & 6 & 0 & 6 & 0 & 6 & 0 \\
16 & $\ZZ_2^4$ & 1 & 1 & 4 & $2^5$ & $2^5$ & $2^5$ & 4 & 2 & 0 & 7 & 12 &  0 & 0 & 7 & 0 & 7 & 0 & 7 & 0 \\
17 & $\ZZ_2^4$ & 1 & 1 & 8 & $2^5$ & $2^5$ & $2^6$ & 2 & 0 & 0 & 3 & 6 & 0 & 0 & 1 & 0 & 1 & 0 & 1 & 0 \\
18 & $\ZZ_2^4$ & 1 & 1 & 8 & $2^5$ & $2^5$ & $2^6$ & 4 & 2 & 0 & 7 & 12 & 0 & 0 & 2 & 0 & 2 & 0 & 2 & 0 \\
19 & $\ZZ_2^4$ & 1 & 2 & 2 & $2^5$ & $2^5$ & $2^5$ & 2 & 0 & 0 & 3 & 6 &  0 & 0 & 9 & 0 & 9 & 9 & 0 & 0 \\
20 & $\ZZ_2^4$ & 1 & 2 & 2 & $2^5$ & $2^5$ & $2^5$ & 3 & 1 & 0 & 5 & 9 & 0 & 0 & 60 & 0 & 60 & 60 & 0 & 0 \\
21 & $\ZZ_2^4$ & 1 & 2 & 2 & $2^5$ & $2^5$ & $2^5$ & 4 & 2 & 0 & 7 & 12 & 0 & 0 & 50 & 0 & 50 & 50 & 0 & 0 \\
22 & $\ZZ_2^4$ & 1 & 2 & 2 & $2^5$ & $2^5$ & $2^5$ & 5 & 3 & 0 & 9 & 15 & 4 & 0 & 8 & 0 & 8 & 8 & 0 & 0 \\
23 & $\ZZ_2^4$ & 1 & 2 & 4 & $2^5$ & $2^5$ & $2^6$ & 2 & 0 & 0 & 3 & 6 & 0 & 0 & 4 & 0 & 4 & 3 & 1 & 0 \\
24 & $\ZZ_2^4$ & 1 & 2 & 4 & $2^5$ & $2^5$ & $2^6$ & 3 & 1 & 0 & 5 & 9 & 0 & 0 & 10 & 0 & 10 & 9 & 1 & 0 \\
25 & $\ZZ_2^4$ & 1 & 2 & 4 & $2^5$ & $2^5$ & $2^6$ & 4 & 2 & 0 & 7 & 12 & 0 & 0 & 7 & 0 & 7 & 6 & 1 & 0 \\
26 & $\ZZ_2^4$ & 1 & 2 & 4 & $2^5$ & $2^5$ & $2^6$ & 5 & 3 & 0 & 9 & 15 & 0 & 0 & 3 & 0 & 3 & 3 & 0 & 0 \\
27 & $\ZZ_2^4$ & 1 & 2 & 4 & $2^5$ & $2^6$ & $2^5$ & 2 & 0 & 0 & 3 & 6 & 0 & 0 & 3 & 0 & 3 & 0 & 3 & 0 \\
28 & $\ZZ_2^4$ & 1 & 2 & 4 & $2^5$ & $2^6$ & $2^5$ & 3 & 1 & 0 & 5 & 9 & 0 & 0 & 6 & 0 & 6 & 0 & 6 & 0 \\
29 & $\ZZ_2^4$ & 1 & 2 & 4 & $2^5$ & $2^6$ & $2^5$ & 4 & 2 & 0 & 7 & 12 & 0 & 0 & 4 & 0 & 4 & 0 & 4 & 0 \\
30 & $\ZZ_2^4$ & 1 & 2 & 4 & $2^5$ & $2^6$ & $2^5$ & 5 & 3 & 0 & 9 & 15 & 0 & 0 & 1 & 0 & 1 & 0 & 1 & 0 \\
31 & $\ZZ_2^4$ & 1 & 2 & 4 & $2^6$ & $2^5$ & $2^5$ & 2 & 0 & 0 & 3 & 6 & 0 & 0 & 2 & 0 & 2 & 0 & 2 & 0 \\
32 & $\ZZ_2^4$ & 1 & 2 & 4 & $2^6$ & $2^5$ & $2^5$ & 3 & 1 & 0 & 5 & 9 & 0 & 0 & 7 & 0 & 7 & 0 & 7 & 0 \\
33 & $\ZZ_2^4$ & 1 & 2 & 4 & $2^6$ & $2^5$ & $2^5$ & 4 & 2 & 0 & 7 & 12 & 0 & 0 & 7 & 0 & 7 & 0 & 7 & 0 \\
34 & $\ZZ_2^4$ & 1 & 2 & 4 & $2^6$ & $2^5$ & $2^5$ & 5 & 3 & 0 & 9 & 15 & 0 & 0 & 1 & 0 & 1 & 0 & 1 & 0 \\
35 & $\ZZ_2^4$ & 1 & 2 & 8 & $2^5$ & $2^6$ & $2^6$ & 2 & 0 & 0 & 3 & 6 & 0 & 0 & 1 & 0 & 1 & 0 & 1 & 0 \\
36 & $\ZZ_2^4$ & 1 & 2 & 8 & $2^5$ & $2^6$ & $2^6$ & 4 & 2 & 0 & 7 & 12 & 0 & 0 & 1 & 0 & 1 & 0 & 1 & 0 \\
37 & $\ZZ_2^4$ & 1 & 2 & 8 & $2^6$ & $2^5$ & $2^6$ & 2 & 0 & 0 & 3 & 6 & 0 & 0 & 1 & 0 & 1 & 0 & 1 & 0 \\
38 & $\ZZ_2^4$ & 1 & 2 & 8 & $2^6$ & $2^5$ & $2^6$ & 4 & 2 & 0 & 7 & 12 & 0 & 0 & 1 & 0 & 1 & 0 & 1 & 0 \\
39 & $\ZZ_2^4$ & 1 & 4 & 4 & $2^5$ & $2^6$ & $2^6$ & 4 & 2 & 0 & 7 & 12 & 0 & 0 & 1 & 0 & 1 & 0 & 1 & 0 \\
40 & $\ZZ_2^4$ & 2 & 2 & 2 & $2^5$ & $2^5$ & $2^6$ & 2 & 0 & 0 & 3 & 6 & 0 & 0 & 8 & 0 & 8 & 4 & 4 & 0 \\
41 & $\ZZ_2^4$ & 2 & 2 & 2 & $2^5$ & $2^5$ & $2^6$ & 3 & 1 & 0 & 5 & 9 & 0 & 0 & 37 & 0 & 37 & 29 & 8 & 0 \\
42 & $\ZZ_2^4$ & 2 & 2 & 2 & $2^5$ & $2^5$ & $2^6$ & 4 & 2 & 0 & 7 & 12 & 0 & 0 & 35 & 0 & 35 & 31 & 4 & 0 \\
43 & $\ZZ_2^4$ & 2 & 2 & 2 & $2^5$ & $2^5$ & $2^6$ & 5 & 3 & 0 & 9 & 15 & 0 & 0 & 6 & 0 & 6 & 4 & 2 & 0 \\
44 & $\ZZ_2^4$ & 2 & 2 & 4 & $2^5$ & $2^6$ & $2^6$ & 2 & 0 & 0 & 3 & 6 & 0 & 0 & 2 & 0 & 2 & 0 & 2 & 0 \\
45 & $\ZZ_2^4$ & 2 & 2 & 4 & $2^5$ & $2^6$ & $2^6$ & 3 & 1 & 0 & 5 & 9 & 0 & 0 & 1 & 0 & 1 & 0 & 1 & 0 \\
46 & $\ZZ_2^4$ & 2 & 2 & 4 & $2^5$ & $2^6$ & $2^6$ & 4 & 2 & 0 & 7 & 12 & 0 & 0 & 6 & 0 & 6 & 0 & 6 & 0 \\
47 & $\ZZ_2^4$ & 2 & 2 & 4 & $2^5$ & $2^6$ & $2^6$ & 5 & 3 & 0 & 9 & 15 & 0 & 0 & 4 & 0 & 4 & 0 & 4 & 0 \\
\end{longtable}

\begin{remark}\label{Remark-Table}
In this remark we comment on some of the examples in our table.
\begin{enumerate}
    \item 
First, we observe that our birationality criteria can detect the birationality or non-birationality of the canonical map and of the bicanonical map in all of the examples, except for row No.~13: here, $g(C_3)=2$ and Proposition~\ref{hyperellBir} implies that the bicanonical map is not birational. The same also occurs in other rows, for example No.~7: here, we again have $g(C_3)=2$, which explains the failure of birationality for the bicanonical map.

\item 
In contrast to the phenomenon occurring in (1), row No.~23 provides a case in which the bicanonical map is non-birational, but for an \emph{exotic} reason. Here, the genera of the curves are
\[
g(C_1)=5, \qquad g(C_2)=3, \qquad g(C_3)=3.
\]
Hence, the natural morphisms
\[
f_i \colon X \to C_j/G_j \times C_k/G_k \simeq \mathbb P^1 \times \mathbb P^1
\]
with general fibre $C_i$ are not genus-$2$ fibrations. Furthermore, Proposition \ref{Proposition-Fibrations} shows that $X$ does not carry any other fibration onto a surface since $h^{2,0}(X)=h^{1,0}(X)=0$ and $h^{1,1}(X)=3$.  Thus, this phenomenon closely resembles the so-called \emph{non-standard case} for surfaces, see~\cite[Section 2]{BCP06} for an overview on the topic. Note that rows No.  40 and 44 provide exotic examples as well.

\item Finally, we observe that $K_X^{\otimes 2}$ is always base-point free and pose the question of whether this property holds in general for regular unmixed threefolds isogenous to a product. Furthermore, our birationality criteria are always satisfied for the 3-canonical map, even though this information is not included in the table. 
This motivates the question of whether $K_X^{\otimes 3}$ defines a birational map for all regular unmixed threefolds of this type. Recall that for a surface $S$ isogenous to a product $K_S^{\otimes 2}$ is base-point free by Reider's theorem \cite{Reider88} since $K^2_S\geq 8$; moreover, $K_S^{\otimes 3}$ defines an embedding (cf.~Proposition \ref{embedding-SIP}).
\end{enumerate}
\end{remark}

\begin{proposition}\label{Proposition-Fibrations}
    Let $X=(C_1\times C_2\times C_3)/G$ be a threefold isogenous to a product of unmixed type such that 
    $$h^{2,0}(X)=h^{1,0}(X)=0 \quad \makebox{and} \quad h^{1,1}(X)=3.$$
    Then, up to isomorphism, the only  fibrations of $X$ onto surfaces or curves are
    \[
    f_i\colon X \to \prod_{j\neq i} C_j/G\cong \PP^1\times \PP^1, \quad 
    \pi_i\colon X \to C_i/G\cong \PP^1, \quad \makebox{$i=1,2,3.$}
    \]
    Moreover, the $f_i$'s correspond to the two-dimensional faces of $\mathrm{Nef}(X)$, while the $\pi_i's$ correspond to its one-dimensional faces, according to the Picard rank of the target.
\end{proposition}

\begin{proof}
    By using the exponential sequence, the conditions $h^{2,0}(X)=h^{1,0}(X)=0$ imply that
    \[
    H^1(X,\mathcal O_X^{\ast}) \cong H^2(X,\mathbb Z).
    \]
    Moreover, since $h^{1,1}(X)=3$, the threefold $X$ has Picard rank $\rho(X)=3$. Hence, the general fibres $F_i \cong C_j \times C_k$ of the fibrations
    $\pi_i \colon X \to C_i/G$
    generate $N^1(X)_{\RR}$.
    \begin{claim}\label{claim-nef-cone}
     The general fibres $F_i$ of the fibrations $ \pi_i \colon X \to C_i/G_i$ generate the nef cone 
    $$
    \mathrm{Nef}(X)=\mathbb R_{\geq 0} F_1 + \mathbb R_{\geq 0} F_2 + \mathbb R_{\geq 0} F_3.
    $$
    In particular, every $\QQ$-divisor in $\operatorname{Nef}(X)$ is semiample.
    \end{claim}
    \begin{proof}[Proof of Claim \ref{claim-nef-cone}]
    Let $D=a_1F_1+a_2F_2+a_3F_3$ be a nef divisor and let $C_{ij}:=F_i\cdot F_j$. We have
    \[
    0 \leq D\cdot C_{ij}=a_k(F_k \cdot C_{ij})=a_k(F_i\cdot F_j\cdot F_k).
    \]
    Since $F_i\cdot F_j\cdot F_k>0$, we conclude that $a_k\geq 0$. This shows that $\operatorname{Nef}(X)$ is the cone spanned by the fibers $F_i$'s. The last part of the thesis follows immediately since the fibers $F_i$'s are semiample divisors.
    \end{proof}
    
    Now let $f\colon X\to Y $ be a fibration of $X$, i.e., a surjective morphism onto a normal projective variety  $Y$ such that $f_\ast \mathcal{O}_X=\mathcal{O}_Y$ and  assume $\dim Y \in \{1,2\}$. The condition $f_\ast \mathcal{O}_X=\mathcal{O}_Y$ implies that $f$ is induced by a linear system $|D|$, where  $D:=f^\ast A$ and $A$ is very ample. Since $\dim Y >0$, the morphism $f$ is indeed the Iitaka fibration associated with $D$.
    Since $f\colon X\to Y$ is not an isomorphism, we have that $D\in \partial \mathrm{Nef}(X)$, i.e., $D$ belongs to one of the six extremal faces of the first octant of $\RR^3\cong N^1(X)_\RR$. Let $\mathcal{F}$ be  the extremal face which contains $D$ in its relative interior, namely $D\in \overset{\circ}{\mathcal{F}} $.
    \begin{claim}\label{claim-contracted-curves}
        Let $C$ be a curve. If $D'\in  \overset{\circ}{\mathcal{F}}$, then
        $D'\cdot C=0$ if and only if $D\cdot C=0.$
    \end{claim}
    \begin{proof}[Proof of Claim \ref{claim-contracted-curves}]
        Assume $D\cdot C=0$. Since $D\in  \overset{\circ}{\mathcal{F}}$, there exists $G\in \overset{\circ}{\mathcal{F}}$ such that
        $$D\in (G,D'):=\{tG + (1-t) D' \mid t \in (0,1)\}.
        $$
        This yields
        \[
        0=D\cdot C=t_0 \ \underset{\geq 0}{\underbrace{(G\cdot C)}} + (1-t_0)\ \underset{\geq 0}{\underbrace{(D'\cdot C)}}, \quad 0<t_0<1,
        \]
        whence $D'\cdot C=G\cdot C=0$. By symmetry, the claim follows.
    \end{proof}

    Note that Claim \ref{claim-nef-cone} implies that every $\QQ$-divisor $D'\in \operatorname{Nef}(X)\setminus \{0\}$ has an associated Iitaka fibration $f'\colon X\to Y'$. This is indeed the map induced by a linear system $|mD'|$ for a suitable $m\in \mathbb N$.
    Assume that $D'\in \overset{\circ}{\mathcal{F}} $.
    Claim \ref{claim-contracted-curves} implies by Rigidity Lemma \cite[Lemma 1.15]{Debarre} that $f'$  is isomorphic to $f\colon X\to Y$, i.e., there exists an isomorphism $g\colon Y\to Y'$ such that $f'=g\circ f$.
    \begin{claim}\label{claim-face-pullback}
    $\mathcal{F}=f^\ast \operatorname{Nef}(Y). $ 
     \end{claim}
    \begin{proof}[Proof of Claim \ref{claim-face-pullback}]

    Since $\operatorname{Nef}(X)\subseteq \RR^3$ is the first octant, it suffices to consider only $\QQ$-divisors.

    Let $G\in \mathcal{F}$ be a  $\QQ$-divisor and let $f_{G}:X\to Z$ be its associated Iitaka fibration. Note that there exist $m\in \mathbb N$ and a very ample divisor $B$ on $Z$ such that $mG=f_G^\ast B$ is an integral divisor and $f_G$ is induced by the linear system $\vert mG \vert$.
    Now let $C$ be a curve. By applying the same argument given in the proof of Claim \ref{claim-contracted-curves}, we have that 
    $D\cdot C=0$ implies $G\cdot C=0$.
    Hence, by Rigidity Lemma 
    \cite[Lemma 1.15]{Debarre}) there is a morphism $h: Y\to Z$ such that $f_{G}=h\circ f$. This implies that 
    \[
    mG=f^\ast_{G}  B =f^\ast (h^\ast B)\in f^\ast \operatorname{Nef}(Y), 
    \]
    whence $G\in f^\ast \operatorname{Nef}(Y)$.

    Conversely, let $G\in f^\ast\operatorname{Nef}(Y)$ be a $\QQ$-divisor. By projection formula, $G\cdot C=0$ for each curve $C$ contracted by $f$, i.e., $D\cdot C=0$.   Suppose by contradiction $G\notin \mathcal{F}$. Since $f\colon X\to Y$ is not an isomorphism, it contracts some curves, hence $G\notin \operatorname{Amp}(X)$. This implies that there exists an extremal face $\mathcal{F}'\neq \mathcal{F}$ such that $G\in \mathcal{F}'$. Suppose first $\dim \mathcal{F}=2$ and assume w.l.o.g. $\mathcal{F}=\RR_{\ge 0}F_1+\RR_{\ge 0} F_2$. Hence, it must occur
    \[
    \mathcal{F}'\in \{\RR_{\ge 0} F_3, \quad \RR_{\ge 0} F_2+\RR_{\ge 0} F_3, \quad \RR_{\ge 0} F_1+\RR_{\ge 0} F_3\}.
    \]
    Let $C':=F_1\cdot F_2$. One can easily verify that  in each of these cases $D\cdot C'=0$, but $G\cdot C'>0$, a contradiction. Suppose finally $\dim \mathcal{F}=1$ and assume w.l.o.g. $\mathcal{F}=\RR_{\ge 0} F_1$. Hence, it must occur
    \[
    \mathcal{F}'\in \{
    \RR_{\ge 0} F_2,
    \quad
    \RR_{\ge 0} F_1+\RR_{\ge 0} F_2,
    \quad
    \RR_{\ge 0} F_2+\RR_{\ge 0} F_3, \quad 
    \RR_{\ge 0} F_3, 
    \quad 
    \RR_{\ge 0} F_1+\RR_{\ge 0} F_3 \}.
    \]
    Taking respectively $C':=F_1\cdot F_3$ in the first three cases and  $C':=F_1\cdot F_2$ in the last two, we get as before $D\cdot C'=0$, but $G\cdot C'>0$, again a contradiction. Therefore, $G\in\mathcal{F}$.    
     \end{proof}
     
    Since the pullback $f^\ast \colon N^1(Y)_\RR \to N^1(X)_\RR$ is injective and $\mathrm{Nef}(Y)\subseteq N^1(Y)_\RR$ is a full-dimensional closed convex cone, Claim \ref{claim-face-pullback} implies that $\dim \mathcal{F}=\rho(Y).$ 
    
    Finally, since the six fibrations $f_i$'s and $\pi_i$'s  belong to the relative interior of pairwise different faces of the first octant $\operatorname{Nef}(X)\subseteq \RR^3$, the given fibration $f:X\to Y$ coincides with one of those. 
\end{proof}

\begin{bibdiv}
\begin{biblist}
\bib{B-H-P-V}{book}{
  author={Barth, Wolf P.},
  author={Hulek, Klaus},
  author={Peters, Chris A. M.},
  author={Van de Ven, Antonius},
  title={Compact complex surfaces},
  series={Ergebnisse der Mathematik und ihrer Grenzgebiete. 3. Folge. A Series of Modern Surveys in Mathematics [Results in Mathematics and Related Areas. 3rd Series. A Series of Modern Surveys in Mathematics]},
  volume={4},
  edition={2},
  publisher={Springer-Verlag, Berlin},
  date={2004},
  pages={xii+436},
  isbn={3-540-00832-2},
  review={\MR {2030225}},
  doi={10.1007/978-3-642-57739-0},
}

\bib{BCP06}{article}{
  author={Bauer, Ingrid},
  author={Catanese, Fabrizio},
  author={Pignatelli, Roberto},
  title={Complex surfaces of general type: some recent progress},
  conference={ title={Global aspects of complex geometry}, },
  book={ publisher={Springer, Berlin}, },
  isbn={978-3-540-35479-6},
  isbn={3-540-35479-4},
  date={2006},
  pages={1--58},
  review={\MR {2264106}},
  doi={10.1007/3-540-35480-8\_1},
}

\bib{BP21}{article}{
  author={Bauer, Ingrid},
  author={Pignatelli, Roberto},
  title={Rigid but not infinitesimally rigid compact complex manifolds},
  journal={Duke Math. J.},
  volume={170},
  date={2021},
  number={8},
  pages={1757--1780},
  issn={0012-7094},
  review={\MR {4278662}},
  doi={10.1215/00127094-2020-0062},
}

\bib{Beau79}{article}{
  author={Beauville, Arnaud},
  title={L'application canonique pour les surfaces de type g\'en\'eral},
  language={French},
  journal={Invent. Math.},
  volume={55},
  date={1979},
  number={2},
  pages={121--140},
  issn={0020-9910},
  review={\MR {0553705}},
  doi={10.1007/BF01390086},
}

\bib{Bomb73}{article}{
  author={Bombieri, Enrico},
  title={Canonical models of surfaces of general type},
  journal={Inst. Hautes \'Etudes Sci. Publ. Math.},
  number={42},
  date={1973},
  pages={171--219},
  issn={0073-8301},
  review={\MR {0318163}},
}

\bib{Cartan}{article}{
  author={Cartan, Henri},
  title={Quotient d'un espace analytique par un groupe d'automorphismes},
  language={French},
  conference={ title={Algebraic geometry and topology. A symposium in honor of S. Lefschetz}, },
  book={ publisher={Princeton Univ. Press, Princeton, NJ}, },
  date={1957},
  pages={90--102},
  review={\MR {0084174}},
}

\bib{Cat00}{article}{
  author={Catanese, Fabrizio},
  title={Fibred surfaces, varieties isogenous to a product and related moduli spaces},
  journal={Amer. J. Math.},
  volume={122},
  date={2000},
  number={1},
  pages={1--44},
  issn={0002-9327},
  review={\MR {1737256}},
}

\bib{Cat18}{article}{
  author={Catanese, Fabrizio},
  title={On the canonical map of some surfaces isogenous to a product},
  conference={ title={Local and global methods in algebraic geometry}, },
  book={ series={Contemp. Math.}, volume={712}, publisher={Amer. Math. Soc., [Providence], RI}, },
  isbn={978-1-4704-3488-5},
  date={2018},
  pages={33--57},
  review={\MR {3832398}},
  doi={10.1090/conm/712/14341},
}

\bib{CCZ07}{article}{
  author={Chen, Jungkai A.},
  author={Chen, Meng},
  author={Zhang, De-Qi},
  title={The 5-canonical system on 3-folds of general type},
  journal={J. Reine Angew. Math.},
  volume={603},
  date={2007},
  pages={165--181},
  issn={0075-4102},
  review={\MR {2312557}},
  doi={10.1515/CRELLE.2007.015},
}

\bib{CH21}{article}{
  author={Chen, Meng},
  author={Hu, Yong},
  title={On an open problem of characterizing the birationality of $4K$},
  journal={Comm. Anal. Geom.},
  volume={29},
  date={2021},
  number={7},
  pages={1545--1557},
  issn={1019-8385},
  review={\MR {4429237}},
  doi={10.4310/cag.2021.v29.n7.a2},
}

\bib{CheWeil34}{article}{
  author={Chevalley, Claude},
  author={Weil, Andr\'e},
  title={\"Uber das verhalten der integrale 1. gattung bei automorphismen des funktionenk\"orpers},
  language={German},
  journal={Abh. Math. Sem. Univ. Hamburg},
  volume={10},
  date={1934},
  number={1},
  pages={358--361},
  issn={0025-5858},
  review={\MR {3069638}},
  doi={10.1007/BF02940687},
}

\bib{Debarre}{book}{
   author={Debarre, Olivier},
   title={Higher-dimensional algebraic geometry},
   series={Universitext},
   publisher={Springer-Verlag, New York},
   date={2001},
   pages={xiv+233},
   isbn={0-387-95227-6},
   review={\MR{1841091}},
   doi={10.1007/978-1-4757-5406-3},
}

\bib{EL80}{article}{
 author={Ellingsrud, Geir},
 author={Lonsted, Knud},
 title={An equivariant Lefschetz formula for finite reductive groups},
 journal={Mathematische Annalen},
 volume={251},
 pages={253--261},
 date={1980},
}

\bib{Fall24a}{article}{
  author={Fallucca, Federico},
  title={Examples of surfaces with canonical maps of degree 12, 13, 15, 16 and 18},
  journal={Ann. Mat. Pura Appl. (4)},
  volume={203},
  date={2024},
  number={3},
  pages={1015--1024},
  issn={0373-3114},
  review={\MR {4754268}},
  doi={10.1007/s10231-023-01363-6},
}

\bib{Fall24b}{article}{
  author={Fallucca, Federico},
  title={On the classification of product-quotient surfaces with $q=0$, $p_g=3$ and their canonical map},
  journal={Atti Accad. Naz. Lincei Rend. Lincei Mat. Appl.},
  volume={35},
  date={2024},
  number={4},
  pages={529--596},
  issn={1120-6330},
  review={\MR {4929969}},
  doi={10.4171/rlm/1051},
}

\bib{FG23}{article}{
  author={Fallucca, Federico},
  author={Gleissner, Christian},
  title={Some surfaces with canonical maps of degrees 10, 11, and 14},
  journal={Math. Nachr.},
  volume={296},
  date={2023},
  number={11},
  pages={5063--5069},
  issn={0025-584X},
  review={\MR {4674267}},
  doi={10.1002/mana.202200450},
}

\bib{FGR26}{article}{
  author={Fallucca, Federico},
  author={Gleissner, Christian},
  author={Ruhland, Noah},
  title={On rigid varieties isogenous to a product of curves},
  journal={J. Algebra},
  volume={688},
  date={2026},
  pages={393--419},
  issn={0021-8693},
  review={\MR {4973580}},
  doi={10.1016/j.jalgebra.2025.09.016},
}

\bib{Farkas-Kra}{book}{
  author={Farkas, Hershel M.},
  author={Kra, Irwin},
  title={Riemann surfaces},
  series={Graduate Texts in Mathematics},
  volume={71},
  edition={2},
  publisher={Springer-Verlag, New York},
  date={1992},
  pages={xvi+363},
  isbn={0-387-97703-1},
  review={\MR {1139765}},
  doi={10.1007/978-1-4612-2034-3},
}

\bib{FG16}{article}{
  author={Frapporti, Davide},
  author={Glei\ss ner, Christian},
  title={On threefolds isogenous to a product of curves},
  journal={J. Algebra},
  volume={465},
  date={2016},
  pages={170--189},
  issn={0021-8693},
  review={\MR {3537820}},
  doi={10.1016/j.jalgebra.2016.06.034},
}

\bib{FG20}{article}{
  author={Frapporti, Davide},
  author={Gleissner, Christian},
  title={A family of threefolds of general type with canonical map of high degree},
  journal={Taiwanese J. Math.},
  volume={24},
  date={2020},
  number={5},
  pages={1107--1115},
  issn={1027-5487},
  review={\MR {4152658}},
  doi={10.11650/tjm/191104},
}

\bib{GPR22}{article}{
  author={Gleissner, Christian},
  author={Pignatelli, Roberto},
  author={Rito, Carlos},
  title={New surfaces with canonical map of high degree},
  journal={Comm. Anal. Geom.},
  volume={30},
  date={2022},
  number={8},
  pages={1811--1823},
  issn={1019-8385},
  review={\MR {4621715}},
  doi={10.4310/cag.2022.v30.n8.a5},
}

\bib{PrincipleAG}{book}{
  author={Griffiths, Phillip},
  author={Harris, Joseph},
  title={Principles of algebraic geometry},
  series={Wiley Classics Library},
  note={Reprint of the 1978 original},
  publisher={John Wiley \& Sons, Inc., New York},
  date={1994},
  pages={xiv+813},
  isbn={0-471-05059-8},
  review={\MR {1288523}},
  doi={10.1002/9781118032527},
}

\bib{Har80}{article}{
  author={Hartshorne, Robin},
  title={Stable reflexive sheaves},
  journal={Math. Ann.},
  volume={254},
  date={1980},
  number={2},
  pages={121--176},
  issn={0025-5831},
  review={\MR {0597077}},
  doi={10.1007/BF01467074},
}

\bib{Hor76II}{article}{
  author={Horikawa, Eiji},
  title={Algebraic surfaces of general type with small $c\sp {2}\sb {1}$. II},
  journal={Invent. Math.},
  volume={37},
  date={1976},
  number={2},
  pages={121--155},
  issn={0020-9910},
  review={\MR {0460340}},
  doi={10.1007/BF01418966},
}

\bib{Klein}{book}{
  author={Klein, Felix},
  title={Vorlesungen \"uber das Ikosaeder und die Aufl\"osung der Gleichungen vom f\"unften Grade},
  language={German, with German summary},
  note={Reprint of the 1884 original; Edited, with an introduction and commentary by Peter Slodowy},
  publisher={Birkh\"auser Verlag, Basel; B. G. Teubner, Stuttgart},
  date={1993},
  pages={xxviii+viii+343},
  isbn={3-7643-2454-6},
  review={\MR {1315530}},
  doi={10.1007/978-3-0348-8594-2},
}

\bib{K04}{article}{
 author={K{\"o}ck, Bernhard},
 title={Galois structure of Zariski cohomology for weakly ramified covers of curves},
 journal={American Journal of Mathematics},
 volume={126},
 number={5},
 pages={1085--1107},
 date={2004}
}

\bib{K05}{article}{
 author={K{\"o}ck, Bernhard},
 title={Computing the equivariant Euler characteristic of Zariski and {\'e}tale sheaves on curves},
 journal={Homology, Homotopy and Applications},
 volume={7},
 number={3},
 pages={83--98},
 date={2005},
}

\bib{Lie03}{article}{
  author={Liedtke, Christian},
  title={Singular abelian covers of algebraic surfaces},
  journal={Manuscripta Math.},
  volume={112},
  date={2003},
  number={3},
  pages={375--390},
  issn={0025-2611},
  review={\MR {2067044}},
  doi={10.1007/s00229-003-0408-y},
}

\bib{LL25a}{article}{
  author={Liu, Qing},
  author={Liu, Wenfei},
  title={On the cohomological representations of finite automorphism groups of singular curves and compact complex spaces},
  year={2025},
  eprint={arXiv:2507.12290},
}

\bib{LL25b}{article}{
  author={Liu, Wenfei},
    author={Lyu, Renjie},
  title={The Chevalley-Weil formula for finite group actions on higher dimensional compact complex manifolds
},
  year={2025},
  eprint={arXiv:2510.10430},
}

\bib{MLP23}{article}{
  author={Mendes Lopes, Margarida},
  author={Pardini, Rita},
  title={On the degree of the canonical map of a surface of general type},
  conference={ title={The art of doing algebraic geometry}, },
  book={ series={Trends Math.}, publisher={Birkh\"auser/Springer, Cham}, },
  isbn={978-3-031-11937-8},
  isbn={978-3-031-11938-5},
  date={2023},
  pages={305--325},
  review={\MR {4592547}},
  doi={10.1007/978-3-031-11938-5\_13},
}

\bib{Mir95}{book}{
  author={Miranda, Rick},
  title={Algebraic curves and Riemann surfaces},
  series={Graduate Studies in Mathematics},
  volume={5},
  publisher={American Mathematical Society, Providence, RI},
  date={1995},
  pages={xxii+390},
  isbn={0-8218-0268-2},
  review={\MR {1326604}},
  doi={10.1090/gsm/005},
}

\bib{Par91}{article}{
  author={Pardini, Rita},
  title={Abelian covers of algebraic varieties},
  journal={J. Reine Angew. Math.},
  volume={417},
  date={1991},
  pages={191--213},
  issn={0075-4102},
  review={\MR {1103912}},
  doi={10.1515/crll.1991.417.191},
}

\bib{Reid85}{article}{
  author={Reid, Miles},
  title={Young person's guide to canonical singularities},
  conference={ title={Algebraic geometry, Bowdoin, 1985}, address={Brunswick, Maine}, date={1985}, },
  book={ series={Proc. Sympos. Pure Math.}, volume={46, Part 1}, publisher={Amer. Math. Soc., Providence, RI}, },
  isbn={0-8218-1476-1},
  date={1987},
  pages={345--414},
  review={\MR {0927963}},
  doi={10.1090/pspum/046.1/927963},
}

\bib{Reider88}{article}{
  author={Reider, Igor},
  title={Vector bundles of rank $2$ and linear systems on algebraic surfaces},
  journal={Ann. of Math. (2)},
  volume={127},
  date={1988},
  number={2},
  pages={309--316},
  issn={0003-486X},
  review={\MR {0932299}},
  doi={10.2307/2007055},
}

\bib{Rito5}{article}{
  author={Rito, Carlos},
  title={Computation of Singular Godeaux Surfaces and a New Explicit Fake Quadric (With an Appendix by Christian Gleissner and Noah Ruhland)},
  year={2025},
  eprint={arXiv:2509.08198},
}

\bib{Weil35}{article}{
 author={Weil, Andr{\'e}},
 title={{\"U}ber Matrizenringe auf Riemannschen Fl{\"a}chen und den Riemann-Rochschen Satz},
 journal={Abhandlungen aus dem Mathematischen Seminar der Universit{\"a}t Hamburg},
 volume={11},
 pages={110--115},
 date={1935},
 publisher={Springer, Berlin/Heidelberg},
}
\end{biblist}
\end{bibdiv}
\end{document}